%% file: main_new_intro.tex
\documentclass{amsart}

\usepackage[margin=28mm]{geometry}
\usepackage[utf8]{inputenc}

\usepackage{amsmath,amssymb,amstext,amsthm} 
\usepackage{pinlabel}
\usepackage{enumitem}
\usepackage{color}
\usepackage{algorithm}
\usepackage{multicol}
\usepackage{soul}
\newlist{steps}{enumerate}{1}
\setlist[steps, 1]{label = Step \arabic*:}

\usepackage[final]{showkeys}
\usepackage[american]{babel}
\usepackage{hyperref}
\usepackage[obeyDraft]{todonotes} 

\usepackage{amsfonts}
\usepackage{amssymb}
\usepackage{amsthm}
\usepackage{amsmath}
\usepackage{graphicx}
\usepackage{url}
\usepackage[all]{xy}
\usepackage{lscape}
\usepackage{mathtools}
\usepackage{tikz}
\usepackage{float}
\usetikzlibrary{shapes.geometric}
\usetikzlibrary{shapes}
\usetikzlibrary{arrows}
\usetikzlibrary{decorations.markings}
\usetikzlibrary{tqft}
\usetikzlibrary{positioning}
\usetikzlibrary{patterns}
\usepackage{dsfont}
\usepackage{tikz-cd}
\usepackage{fancyhdr}

\usepackage{cleveref}

\makeatletter
\renewcommand\subsection{\@startsection{subsection}{2}%
  \z@{-.5\linespacing\@plus-.7\linespacing}{.5\linespacing}%
  {\normalfont\scshape}}

\makeatletter
\newcommand{\bigsymbol}[1]{%
  \DOTSB
  \mathop{
    \mathchoice{\big@symbol\displaystyle\Large{#1}}
               {\big@symbol\textstyle\large{#1}}
               {\big@symbol\scriptstyle\normalsize{#1}}
               {\big@symbol\scriptscriptstyle\small{#1}}%
    }\slimits@
}

\newcommand{\big@symbol}[3]{%
  \vcenter{%
    \sbox\z@{$#1\sum$}%
    \dimen@=0.675\dimexpr\ht\z@+\dp\z@\relax 
    #2
    \hbox{\resizebox{!}{\dimen@}{$\m@th#3$}}%
  }%
  \vphantom{\sum}%
}
\makeatother

\usepackage{amsthm}

\theoremstyle{definition}
\newtheorem{definition}{Definition}[section]
\newtheorem{example}[definition]{Example}

\theoremstyle{remark}
\newtheorem{remark}[definition]{Remark}

\theoremstyle{plain}
\newtheorem{theorem}[definition]{Theorem}
\newtheorem{proposition}[definition]{Proposition}

\newtheorem{corollary}[definition]{Corollary}

\newtheorem{question}[definition]{Question}

\usepackage{amssymb}

\newcommand{\defeq}{\mathrel{\mathop:}=}

\newcommand{\ie}{i.\,e.~}

\newcommand{\eg}{e.\,g.~}

\newcommand{\R}{\mathbb{R}}
\newcommand{\F}{\mathbb{F}}

\newcommand{\T}{\mathcal{T}}

\newcommand{\sigij}{a_{i,j}}

\usepackage{aliascnt}
\usepackage{xspace}
\usepackage{hyperref}
\usepackage{caption}
\usepackage{subcaption}

\usepackage{yfonts}
\usepackage{psfrag}

\usepackage{chngcntr}

\crefname{thm}{theorem}{theorems}

\usepackage{tikz}
\usetikzlibrary{
  cd,
  calc,
  positioning,
  fit,
  arrows,
  decorations.pathreplacing,
  decorations.markings,
  shapes.geometric,
  backgrounds,
  bending
}

\newcommand*{\StrikeThruDistance}{0.15cm}%
\usepackage{tikzsymbols}

\def\pgf@sh@@knotanchor#1#2{%
  \anchor{#2 north west}{%
    \csname pgf@anchor@knot #1@north west\endcsname%
    \pgf@x=#2\pgf@x%
    \pgf@y=#2\pgf@y%
  }%
  \anchor{#2 north east}{%
    \csname pgf@anchor@knot #1@north east\endcsname%
    \pgf@x=#2\pgf@x%
    \pgf@y=#2\pgf@y%
  }%
  \anchor{#2 south west}{%
    \csname pgf@anchor@knot #1@south west\endcsname%
    \pgf@x=#2\pgf@x%
    \pgf@y=#2\pgf@y%
  }%
  \anchor{#2 south east}{%
    \csname pgf@anchor@knot #1@south east\endcsname%
    \pgf@x=#2\pgf@x%
    \pgf@y=#2\pgf@y%
  }%
  \anchor{#2 north}{%
    \csname pgf@anchor@knot #1@north\endcsname%
    \pgf@x=#2\pgf@x%
    \pgf@y=#2\pgf@y%
  }%
  \anchor{#2 east}{%
    \csname pgf@anchor@knot #1@east\endcsname%
    \pgf@x=#2\pgf@x%
    \pgf@y=#2\pgf@y%
  }%
  \anchor{#2 west}{%
    \csname pgf@anchor@knot #1@west\endcsname%
    \pgf@x=#2\pgf@x%
    \pgf@y=#2\pgf@y%
  }%
  \anchor{#2 south}{%
    \csname pgf@anchor@knot #1@south\endcsname%
    \pgf@x=#2\pgf@x%
    \pgf@y=#2\pgf@y%
  }%
}

\makeatletter
\def\pgfaddtoshape#1#2{%
  \begingroup
  \def\pgf@sm@shape@name{#1}%
  \let\anchor\pgf@sh@anchor
  #2%
  \endgroup
}

\def\useanchor#1#2{\csname pgf@anchor@#1@#2\endcsname}

\def\@shiftback#1#2#3#4#5#6{%
    \advance\pgf@x by -#5\relax
    \advance\pgf@y by -#6\relax
}

\pgfaddtoshape{rectangle}{%
  \anchor{west south west}{%
    \pgf@process{\northeast}%
    \pgf@ya=.5\pgf@y%
    \pgf@process{\southwest}%
    \pgf@y=1.5\pgf@y%
    \advance\pgf@y by \pgf@ya%
    \pgf@y=.5\pgf@y%
  }%
  \anchor{west north west}{%
    \pgf@process{\northeast}%
    \pgf@ya=1.5\pgf@y%
    \pgf@process{\southwest}%
    \pgf@y=.5\pgf@y%
    \advance\pgf@y by \pgf@ya%
    \pgf@y=.5\pgf@y%
  }%
  \anchor{east north east}{%
    \pgf@process{\southwest}%
    \pgf@ya=.5\pgf@y%
    \pgf@process{\northeast}%
    \pgf@y=1.5\pgf@y%
    \advance\pgf@y by \pgf@ya%
    \pgf@y=.5\pgf@y%
  }%
  \anchor{east south east}{%
    \pgf@process{\southwest}%
    \pgf@ya=1.5\pgf@y%
    \pgf@process{\northeast}%
    \pgf@y=.5\pgf@y%
    \advance\pgf@y by \pgf@ya%
    \pgf@y=.5\pgf@y%
  }%
  \anchor{north north west}{%
    \pgf@process{\southwest}%
    \pgf@xa=1.5\pgf@x%
    \pgf@process{\northeast}%
    \pgf@x=.5\pgf@x%
    \advance\pgf@x by \pgf@xa%
    \pgf@x=.5\pgf@x%
  }%
  \anchor{north north east}{%
    \pgf@process{\southwest}%
    \pgf@xa=.5\pgf@x%
    \pgf@process{\northeast}%
    \pgf@x=1.5\pgf@x%
    \advance\pgf@x by \pgf@xa%
    \pgf@x=.5\pgf@x%
  }%
  \anchor{south south west}{%
    \pgf@process{\northeast}%
    \pgf@xa=.5\pgf@x%
    \pgf@process{\southwest}%
    \pgf@x=1.5\pgf@x%
    \advance\pgf@x by \pgf@xa%
    \pgf@x=.5\pgf@x%
  }%
  \anchor{south south east}{%
    \pgf@process{\northeast}%
    \pgf@xa=1.5\pgf@x%
    \pgf@process{\southwest}%
    \pgf@x=.5\pgf@x%
    \advance\pgf@x by \pgf@xa%
    \pgf@x=.5\pgf@x%
  }%
  \anchor{width}{%
    \useanchor{rectangle}{west}%
    \pgf@xc=\pgf@x
    \useanchor{rectangle}{east}%
    \advance\pgf@x by -\pgf@xc
    \pgf@y=\z@
    \edef\pgf@temp{\csname pgf@sh@nt@\pgfreferencednodename\endcsname}%
    \expandafter\@shiftback\pgf@temp
  }
  \anchor{height}{%
    \useanchor{rectangle}{south}%
    \pgf@yc=\pgf@y
    \useanchor{rectangle}{north}%
    \advance\pgf@y by -\pgf@yc
    \pgf@x=\z@
    \edef\pgf@temp{\csname pgf@sh@nt@\pgfreferencednodename\endcsname}%
    \expandafter\@shiftback\pgf@temp
  }
  \anchor{size}{%
    \useanchor{rectangle}{south west}%
    \pgf@xc=\pgf@x
    \pgf@yc=\pgf@y
    \useanchor{rectangle}{north east}%
    \advance\pgf@x by -\pgf@xc
    \advance\pgf@y by -\pgf@yc
    \edef\pgf@temp{\csname pgf@sh@nt@\pgfreferencednodename\endcsname}%
    \expandafter\@shiftback\pgf@temp
  }
}
\makeatother

\pgfdeclareshape{knot crossing}
{
  \inheritsavedanchors[from=circle] 
  \inheritanchorborder[from=circle]
  \inheritanchor[from=circle]{north}
  \inheritanchor[from=circle]{north west}
  \inheritanchor[from=circle]{north east}
  \inheritanchor[from=circle]{center}
  \inheritanchor[from=circle]{west}
  \inheritanchor[from=circle]{east}
  \inheritanchor[from=circle]{mid}
  \inheritanchor[from=circle]{mid west}
  \inheritanchor[from=circle]{mid east}
  \inheritanchor[from=circle]{base}
  \inheritanchor[from=circle]{base west}
  \inheritanchor[from=circle]{base east}
  \inheritanchor[from=circle]{south}
  \inheritanchor[from=circle]{south west}
  \inheritanchor[from=circle]{south east}
  \inheritanchorborder[from=circle]
  \pgf@sh@@knotanchor{crossing}{2}
  \pgf@sh@@knotanchor{crossing}{3}
  \pgf@sh@@knotanchor{crossing}{4}
  \pgf@sh@@knotanchor{crossing}{8}
  \pgf@sh@@knotanchor{crossing}{16}
  \pgf@sh@@knotanchor{crossing}{32}
  \backgroundpath{
    \pgfutil@tempdima=\radius%
    \pgfmathsetlength{\pgf@xb}{\pgfkeysvalueof{/pgf/outer xsep}}%
    \pgfmathsetlength{\pgf@yb}{\pgfkeysvalueof{/pgf/outer ysep}}%
    \ifdim\pgf@xb<\pgf@yb%
      \advance\pgfutil@tempdima by-\pgf@yb%
    \else%
      \advance\pgfutil@tempdima by-\pgf@xb%
    \fi%
  }
}

\definecolor{darkblue}{rgb}{0,0,0.6}

\definecolor{amaranth}{rgb}{0.9, 0.17, 0.31} 
\definecolor{carrotorange}{rgb}{0.93, 0.57, 0.13} 
\definecolor{citrine}{rgb}{0.89, 0.82, 0.04} 
\definecolor{Green}{rgb}{0.05, 0.5, 0.06} 
\definecolor{teal}{rgb}{0.0, 0.5, 0.5} 
\definecolor{ballblue}{rgb}{0.13, 0.67, 0.8} 
\definecolor{Cerulean}{rgb}{0.16, 0.32, 0.75} 
\definecolor{amethyst}{rgb}{0.6, 0.4, 0.8} 
\definecolor{amber}{rgb}{1.0, 0.75, 0.0} 
\definecolor{burlywood}{rgb}{0.87, 0.72, 0.53} 
\definecolor{Gray}{rgb}{0.827,0.827,0.827}

\AtBeginDocument{%
   \def\MR#1{}
}

\title{On $\mathcal{T}$-positive links}
\author{Benjamin Bode}
\address{Departamento de Matemática Aplicada a la Ingeniería Industrial - ETSIDI, Universidad Politécnica de Madrid, Rda. de Valencia 3, Arganzuela, 28012 Madrid, SPAIN}
\email{benjamin.bode@upm.es}
\author{Paula Truöl}
\address{School of Mathematics and Statistics, University of Glasgow, University Place, Glasgow, G12 8QQ, United Kingdom}
\email{paula.truol@glasgow.ac.uk,
paulagtruoel@gmail.com}

\begin{document}

\makeatletter
\@namedef{subjclassname@2020}{%
	\textup{2020} Mathematics Subject Classification}
\makeatother

\subjclass[2020]{57K10; 20F36}

\keywords{Strongly quasipositive braids and links, positive Hopf-plumbed baskets, fibered knots}

\begin{abstract} 
$\T$-positive links form a subset of strongly quasipositive links that strictly contains the set of all non-split braid positive links. Analogous to Baader's characterisation of positive links as precisely the strongly quasipositive and homogeneous links, we show that $\T$-positive links are precisely the strongly quasipositive links that are the closures of $\T$-homogeneous braids. This complements previous characterizations of $\T$-positive links by Rudolph and Banfield as links arising as boundaries of positive Hopf-plumbed baskets, or closures of staircase braids. We examine the behavior of $\T$-positive links under cabling operations and connected sums, and demonstrate that all strongly quasipositive, fibered knots with at most 12 crossings are $\T$-positive. Additionally, we compare $\T$-positivity with other positivity notions for links and compile open questions.
\end{abstract}

\maketitle

\section{Introduction} 

$\mathcal{T}$-positive links were defined by Rudolph~\cite{Rudolph_Hopf_plumbing} over twenty years ago. They are, in particular, \emph{strongly quasipositive}, so they arise as closures of strongly quasipositive braids. Strongly quasipositive braids also first appeared in the work of Rudolph~\cite{Rudolph_1983,rudolph_linkpolys} and can be best defined using the \emph{BKL-generators} (or \emph{band generators})~$a_{i,j}$ of the braid group $B_n$ on $n$ strands~\cite{birmankolee}. In terms of the classical Artin generators~\cite{artin_1925} of $B_n$, denoted $\sigma_1, \dots, \sigma_{n-1}$, the BKL-generators of~$B_n$ are given by 
\begin{align*}
\sigij = \left(\sigma_i \sigma_{i+1} \cdots \sigma_{j-2} \right) \sigma_{j-1} \left(\sigma_i \sigma_{i+1} \cdots \sigma_{j-2} \right)^{-1} 
\qquad \text{for} \quad 1 \leq i < j \leq n \qquad \text{(see \Cref{fig:sqpbraidgen})}.
\end{align*}

\begin{figure}[htbp] 
	 \centering
     \def\svgwidth{0,23\columnwidth}
	 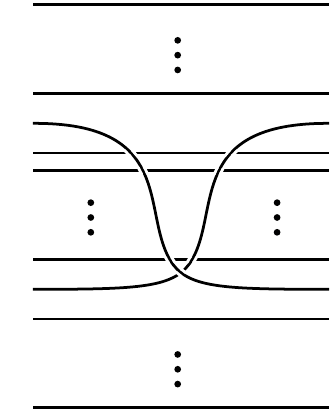
	\caption{The BKL-generator $\sigij$.}
	\label{fig:sqpbraidgen}
\end{figure}

We refer to \Cref{subsubsec:braids_present} for the relations of $B_n$ in terms of the $a_{i,j}$.
A braid $B \in B_n$ is \emph{strongly quasipositive} or \emph{BKL-positive} if it is a product of positive BKL-generators (no inverses~$a_{i,j}^{-1}$), that is,
\begin{align*}
    B = \prod_{k=1}^m a_{i_k,j_k} \qquad \text{for some} \quad 1 \leq i_k < j_k \leq n, \quad k\in \{1,\dots,m\}.
\end{align*}
In \Cref{subsubsec:T-homo}, following~\cite{Rudolph_Hopf_plumbing}, we define the set of \emph{$T$-generators} corresponding to the $n-1$ edges of an \emph{espalier}~$T$, which is a planar tree with $n$ vertices. These $T$-generators form a subset of cardinality~$n-1$ of the set of~$\binom{n}{2}$ BKL-generators $\{a_{i,j}\}_{1 \leq i < j \leq n}$. A braid~$B \in B_n$ is called \emph{$T$-positive} if it is a product of positive $T$-generators. For technical reasons, we also assume that each of the~$n-1$ generators specified by~$T$ appears at least once in~$B$. We refer to \Cref{subsubsec:T-homo} for the precise definitions. A link is \emph{$\mathcal{T}$-positive} if it arises as the closure of a $T$-positive braid for some espalier~$T$. By definition, $\mathcal{T}$-positive links are strongly quasipositive. In fact, there are inclusions
\begin{align}\label{eq:inclusions}
    \begin{split}
        \{\text{non-split braid positive links}\} &\subsetneq \{\text{$\mathcal{T}$-positive links}\} \\&\subsetneq \{\text{strongly quasipositive links}\}.
    \end{split}
\end{align}
Recall that a link is called \emph{braid positive} if it arises as the closure of a positive braid. A braid~$B \in B_n$ is \emph{positive} if it is a product of positive BKL-generators $a_{i,i+1}=\sigma_i$ for~$1 \leq i < n$. Using a specific espalier, namely the \emph{linear graph} $T_n$ with vertices at the first $n$ integers on the positive real axis in $\R^2$ and edges between~$i$ and~$i+1$ for each~$1 \leq i < n$ (see \Cref{fig:espalierT_n}), these are precisely the $T_n$-generators. This implies the first of the inclusions in \eqref{eq:inclusions}; see \Cref{subsubsec:T-homo} for more details. 

 \begin{figure}[htbp] 
     \centering
     \def\svgwidth{0,35\columnwidth}
	 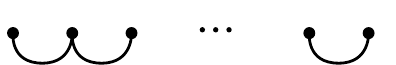 
	\caption{An example of an espalier: the linear graph $T_n$.}
    \label{fig:espalierT_n}
\end{figure}
 
In analogy to known results for positive and braid positive links (see below), we show the following. 

\begin{theorem}\label{theorem:main_T-pos}
$\T$-positive links are precisely the strongly quasipositive links that are $\T$-homogeneous.
\end{theorem}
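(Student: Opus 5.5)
One inclusion is immediate from the definitions, and the other is a Baader-type argument run through Bennequin's inequality. A $\T$-homogeneous braid is presumably a braid word in the $T$-generators of some espalier $T$ and their inverses, where each generator appears with only one sign and each of the $n-1$ generators appears at least once. A $T$-positive braid is then in particular $T$-homogeneous. Since $\T$-positive links are strongly quasipositive by definition, every $\T$-positive link is a strongly quasipositive, $\T$-homogeneous link.

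For the converse, let $L$ be strongly quasipositive and the closure of a $T$-homogeneous braid $\beta\in B_n$. Write $\beta$ as a word with $p$ positive and $q$ negative letters. Following Rudolph's picture of $T$-generators as Hopf bands in a basket, the closure of $\beta$ bounds a canonical Seifert surface $\Sigma_\beta$ made of $n$ disks and $p+q$ twisted bands. Because each generator occurs with only one sign and every edge of $T$ is used, $\Sigma_\beta$ is obtained from a disk by plumbing positive and negative Hopf bands, or unknotted annuli, along the espalier. The central step is to show that $\Sigma_\beta$ realizes the genus of $L$:
\[
\chi(L)=\chi(\Sigma_\beta)=n-p-q.
\]
I would prove this the way Baader proceeds for homogeneous braids, through Gabai's theorem that Murasugi sums of minimal genus surfaces are minimal genus. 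The $T$-homogeneity lets one decompose $\Sigma_\beta$ as an iterated Murasugi sum of surfaces, each coming from a single generator, hence a fibre surface of a $(2,k)$-torus link. Those pieces are of minimal genus.

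Next, since $L$ is strongly quasipositive, its slice Euler characteristic equals its Euler characteristic. Rudolph's slice-Bennequin inequality, applied to any braid representative, then gives
\[
\chi(L)\le n-w(\beta)=n-(p-q).
\]
Comparing with $\chi(L)=n-p-q$ yields $n-p-q\le n-p+q$, which gives no contradiction as stated. So the inequality has to be used in its sharp form, as in Baader's argument: for strongly quasipositive $L$, the quasipositive surface $\Sigma$ has $\chi(\Sigma)=\chi_4(L)$, and the slice-Bennequin inequality for the quasipositive braid word on $n$ strands reads $\chi_4(L)=n-w$. This has to be compared with a lower bound coming from the negative letters.

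The cleanest way to get that comparison is through the Rasmussen or Ozsváth–Szabó $\tau$ invariant. First, I would establish the additivity
\[
\tau(\widehat\beta)=\tfrac{1}{2}(-n+p-q+1)+\#\{\text{components}\}\text{-correction},
\]
or equivalently $\tau(\widehat\beta)=g(\widehat\beta)-q$. This is proved by resolving one negative crossing at a time: each resolution changes $\tau$ by at most one while lowering the genus $g(\Sigma_\beta)$ by one, and removing all negative letters leaves a $T$-positive braid, whose closure has $\tau=g$. Second, strong quasipositivity of $L$ forces $\tau(L)=g(L)$. Together these give $q=0$, so $\beta$ is $T$-positive and $L$ is $\T$-positive. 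If some generator then appears with exponent zero, one destabilizes by collapsing that edge of $T$, as in the technical assumption in the definition.

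The main obstacle is the genus computation, in particular the exact identity $\tau(\widehat\beta)=g-q$. This needs two things: that $\Sigma_\beta$ is of minimal genus, via the Murasugi sum decomposition along the espalier, and that the crossing changes interact with $\tau$ in a controlled way. I would first try to deduce both from Gabai's Murasugi-sum result together with the behavior of $\tau$ under Murasugi sums for homogeneous surfaces, mirroring Baader's proof for homogeneous links.
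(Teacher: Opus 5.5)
\textbf{There is a genuine gap in your converse direction.}

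The key identity $\tau(\widehat\beta)=g(\widehat\beta)-q$ is false for $T$-homogeneous braids. Equivalently it says $\tau(\widehat\beta)=\tfrac12(w(\beta)-n+1)$, i.e.\ that the slice-Bennequin bound is sharp for $\beta$. The conclusion $q=0$ you draw from it is also false. Two examples:
\begin{itemize}
\item $\beta=\sigma_1^3\sigma_2^{-1}\in B_3$ is $T_3$-homogeneous, and its closure is the positive trefoil. That link is strongly quasipositive with $\tau=g=1$, whereas $g-q=0$.
\item $\sigma_1^{-3}\in B_2$ has $\tau=-1\neq g-q=-2$.
\end{itemize}
The first example shows more. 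A strongly quasipositive link can have a $T$-homogeneous representative containing a negative letter. So no argument can show that the given word is already $T$-positive: the word itself has to be changed.

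The heuristic behind the identity also fails. Deleting all negative letters deletes every band of any generator that occurs only negatively. Each edge of $T$ is a bridge, so this disconnects $F(\beta)$, and you do not arrive at a $T$-positive braid.

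Your slice-Bennequin attempt only yields $q\ge 0$ for $\beta$. The equality $\chi_4(L)=n-w$ holds for the strongly quasipositive representative, which is a different braid and cannot be compared with $\beta$ letter by letter. In any case, a $\tau$ argument would only cover knots, while the statement is about links.

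\textbf{Two ingredients are missing.}

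\emph{First}, strong quasipositivity must be transferred to the individual Murasugi summands, not just to a global numerical invariant.
\begin{itemize}
\item It matters that $F(\beta)$ is a \emph{fibre} surface, not merely of minimal genus. It is an iterated Murasugi sum of the fibre surfaces $F_{2,t_k}$ of $T_{2,t_k}$, where $t_k$ is the exponent sum of the $k$th $T$-generator.
\item Incompressible Seifert surfaces of fibred links are unique up to isotopy. So $F(\beta)$ is isotopic to the quasipositive surface $F(A)$ of a strongly quasipositive representative $A$, which realizes $\chi(L)$ by Bennequin. Minimal genus alone (all that Gabai's theorem gives you) would not yield this identification.
\item Hence $F(\beta)$ is quasipositive. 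Rudolph's theorem, that a Murasugi sum is quasipositive if and only if its summands are, then forces every $T_{2,t_k}$ to be strongly quasipositive, i.e.\ $t_k\ge -1$.
\end{itemize}

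\emph{Second}, the case $t_k=-1$ is handled by an isotopy rather than an invariant.
\begin{itemize}
\item In that case the generator occurs exactly once, as $a_{i_k,j_k}^{-1}$, and its single negative band separates $F(\beta)$.
\item Rotating one side of the surface by $2\pi$ about an axis through that band turns it into a positive band.
\item So replacing $a_{i_k,j_k}^{-1}$ by $a_{i_k,j_k}$ does not change the closure. Iterating over all such $k$ gives a $T$-positive representative of $L$.
\end{itemize}

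Without these two steps, or a correct substitute for them, the converse direction is not proved.
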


\emph{$\T$-homogeneous links} are defined as the closures of $T$-homogeneous braids. For an espalier $T$, the \emph{$T$-homogeneous braids} are represented by braid words $B$ in the $T$-generators such that each $T$-generator appears at least once and only with all exponents of the same fixed sign in~$B$. \emph{Homogeneous} braids in~$B_n$ in the sense of Stallings~\cite{stallings_1978} are precisely the $T_n$-homogeneous braids. 

\begin{corollary}\label{prop:braid_positive}
Non-split braid positive links are precisely the strongly quasipositive links that are closures of homogeneous braids. 
\end{corollary}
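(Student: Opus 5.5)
The corollary should be the special case $T = T_n$ of \Cref{theorem:main_T-pos}, but run through the proof of that theorem rather than its statement. The reason is that the theorem only produces \emph{some} espalier $T$, whereas here we must stay inside the linear graph. For $T=T_n$, the $T_n$-generators are exactly the Artin generators $\sigma_1,\dots,\sigma_{n-1}$. So $T_n$-homogeneous braids are exactly Stallings' homogeneous braids in which every $\sigma_i$ appears, and $T_n$-positive braids are exactly positive braids in which every $\sigma_i$ appears.

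\emph{Easy inclusion.} Let $L$ be a non-split braid positive link, say $L=\widehat{B}$ with $B\in B_n$ positive. If some $\sigma_i$ were missing from $B$, the closure would split as a distant union, so non-splitness gives that every $\sigma_i$ occurs. (For the unknot, take $B_1$ with the empty word, or $\sigma_1\in B_2$.) Then $B$ is homogeneous, being positive, and $L$ is strongly quasipositive, since positive braids are BKL-positive. This is just the first inclusion in \eqref{eq:inclusions}.

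\emph{Converse.} Let $L=\widehat{B}$ be strongly quasipositive with $B\in B_n$ homogeneous.
\begin{enumerate}
\item First reduce to the case where every $\sigma_i$ occurs in $B$. If some $\sigma_i$ is absent, $\widehat B$ is a split union of closures of homogeneous braids on fewer strands. Each split component is again strongly quasipositive, because strong quasipositivity is preserved under taking split components (for instance via the characterization by quasipositive Seifert surfaces). A split union of braid positive links is braid positive. We would still need to check how this reduction interacts with the word ``non-split'' in the statement. Presumably the intended reading is that closures of homogeneous braids using all generators are non-split.
\item Once every $\sigma_i$ occurs, $B$ is $T_n$-homogeneous. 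The proof of \Cref{theorem:main_T-pos} (whatever argument it uses) should show that $\widehat B$ is the closure of a $T_n$-positive braid for the \emph{same} espalier $T_n$. A natural route is that the homogeneous Seifert surface of $B$ is a Murasugi sum of positive and negative torus-link fiber surfaces, one for each $\sigma_i$. This surface has minimal genus (Stallings, Cromwell), so it realizes $\chi_4$-type equalities. Strong quasipositivity forces the Bennequin bound $\chi(L) = n - \#\text{positive letters} + \#\text{negative letters}$ to be sharp in the slice sense. That is possible only if no negative generator occurs.
\item Hence $B$ is positive and $L$ is non-split braid positive.
\end{enumerate}

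The main obstacle is step 2: making sure the argument of \Cref{theorem:main_T-pos} preserves the espalier. I would try to argue directly with a slice--Bennequin comparison. Suppose $\sigma_i$ appears only with negative exponents, $k$ times. Then the homogeneous surface has $\chi = n - p - k$, where $p$ is the number of positive letters. This surface is fibered and of minimal genus, so $g_4(L)\le g(L)$. Rudolph's slice-Bennequin inequality then gives $-\chi_4(L) \ge p - k - n$. For strongly quasipositive links $g_4=g$, which forces $p+k-n = p-k-n$, so $k=0$.
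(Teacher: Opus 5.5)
Your steps 2--3 have a genuine gap: the conclusion that $B$ itself must be positive is false. For example, $B=\sigma_1^{3}\sigma_2^{-1}\in B_3$ is homogeneous and uses every generator. Its closure is the positive trefoil (destabilize the single $\sigma_2^{-1}$), which is strongly quasipositive. Even $\sigma_1^{-1}\in B_2$ closes to the unknot.

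The slice--Bennequin computation does not rescue this. Rudolph's inequality $-\chi_4(L)\ge w(B)-n=p-k-n$ holds for every braid representative, but equality is only guaranteed for quasipositive representatives, not for your homogeneous $B$. Your identity $-\chi_4(L)=-\chi(L)=p+k-n$ is correct, using the fiber surface $F(B)$ and $\chi_4=\chi$ for strongly quasipositive links. Combined with the inequality, it only yields $p+k-n\ge p-k-n$, i.e.\ $k\ge 0$, which is vacuous; in the example above one gets $1>-1$. So no argument of this shape can make the negative letters disappear from $B$; you have to change the braid word.

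What is missing are the two steps of the paper's proof of \Cref{prop:main_T-pos}. That proposition is already stated for a fixed espalier $T$, so your concern about ``some espalier'' is resolved there, and the corollary is just the case $T=T_n$.

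First, let $t_i$ be the exponent sum of $\sigma_i$. As you began to say, $F(B)$ is an iterated Murasugi sum of the fiber surfaces $F_{2,t_i}$, so it is a fiber surface. By uniqueness of the maximal-$\chi$ Seifert surface of a fibered link, it is isotopic to the quasipositive surface $F(A)$ of a strongly quasipositive representative $A$. Rudolph's theorem that Murasugi summands of a quasipositive surface are quasipositive then makes each $T_{2,t_i}$ strongly quasipositive. This gives only $t_i\ge -1$, not $t_i\ge 0$.

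Second, a generator with $t_i=-1$ occurs exactly once, as $\sigma_i^{-1}$, and its band disconnects $F(B)$. Rotating one side by $2\pi$ (Claim 2) turns that band into a positive one without changing the isotopy class of the surface, so $\sigma_i^{-1}$ may be replaced by $\sigma_i$. Iterating produces a strictly positive braid with the same closure.

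Your step 1 is harmless but unnecessary, since in the paper homogeneous braids are by definition required to contain every $\sigma_i$.
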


We believe that \Cref{prop:braid_positive} 
might be well-known to experts, but to the best of our knowledge it has not appeared in this form in the literature. The slightly weaker analogous statement for braid positive diagrams was already observed by Baader~\cite[p.~287]{baader_pos}, who also showed the following.

\begin{theorem}[{\cite{baader_pos}; see also \cite{abe_homog,abe-tagami,FLL:almostpos}}]\label{theorem:pos_Seb}
Positive links are precisely the strongly quasipositive links that are homogeneous.
\end{theorem}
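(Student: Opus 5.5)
The plan is to realise a homogeneous diagram as a Murasugi sum of positive and negative pieces and to rule out nontrivial negative pieces using strong quasipositivity. The forward inclusion is standard. A positive diagram is homogeneous, because every block of its Seifert graph carries only positive edges. Positive links are strongly quasipositive, by Rudolph's observation that Seifert's algorithm applied to a positive diagram produces a quasipositive Seifert surface. The real work is the converse: a strongly quasipositive link $L$ with a homogeneous diagram $D$ admits a positive diagram.

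\textbf{Decomposition of the canonical surface.} Let $\Sigma_D$ be the canonical Seifert surface obtained from $D$ by Seifert's algorithm. Following Cromwell, cut the Seifert graph of $D$ into its blocks $G_1,\dots,G_r$. Each block is, by homogeneity, entirely positive or entirely negative. $\Sigma_D$ is then an iterated Murasugi sum of the canonical surfaces $\Sigma_1,\dots,\Sigma_r$ of special alternating diagrams $D_1,\dots,D_r$, one for each block. These are positive or negative according to the sign of the block. Cromwell showed that $\Sigma_D$ realises the genus of $L$, and that each $\Sigma_i$ is a minimal genus surface for its link $L_i$.

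\textbf{Key step: each summand is quasipositive.} Since $L$ is strongly quasipositive, its maximal Euler characteristic equals its four-ball Euler characteristic: $\chi(L)=\chi_4(L)$, by Rudolph and the slice-Bennequin inequality. A minimal genus Seifert surface of a strongly quasipositive link is quasipositive, so I want to conclude that $\Sigma_D$ is quasipositive. Rudolph proved that a Murasugi sum is quasipositive if and only if all its summands are; granting that $\Sigma_D$ is quasipositive, each $\Sigma_i$ is therefore quasipositive, and each $L_i$ is strongly quasipositive with $\chi_4(L_i)=\chi(\Sigma_i)$.

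This is the step I expect to be the main obstacle, because it is not automatic that an arbitrary minimal genus surface of a strongly quasipositive link is quasipositive. If that fails, my fallback is to argue directly with a concordance invariant that is additive under these Murasugi sums for homogeneous diagrams. Rasmussen's $s$ (or $\tau$) should do this, via Lobb's and Kawamura's sharpened Bennequin bounds, which compute $s$ exactly for homogeneous diagrams from the Seifert circle and crossing data block by block. Equality $s=1-\chi$ for $L$ then forces equality for every block.

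\textbf{Negative blocks and conclusion.} Now take a negative block $D_i$. The link $L_i$ is negative, so its mirror is positive, and for nontrivial positive links $s$ (equivalently $\tau$) equals $1-\chi>0$. Hence a negative link has $s=-(1-\chi)$, which is compatible with $s(L_i)=1-\chi(\Sigma_i)$ only when $\chi(\Sigma_i)=1$. So $\Sigma_i$ is a disc, and the block $G_i$ consists of two Seifert circles joined by a single negative crossing. Such a crossing is nugatory in $D$, since it is a cut-edge of the Seifert graph and hence an isthmus of the diagram. It can therefore be removed by a half-twist (flype / Reidemeister I move) without creating new crossings of the other sign. Removing all negative blocks in this way leaves a diagram of $L$ all of whose crossings are positive, so $L$ is positive.
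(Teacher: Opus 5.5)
Your main line has a genuine gap exactly where you suspect it. Nothing supports ``granting that $\Sigma_D$ is quasipositive'', and the principle you invoke just before it (every minimal genus Seifert surface of a strongly quasipositive link is quasipositive) is not a theorem you can cite.

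The paper does not reprove this statement; it quotes it from \cite{baader_pos}. Its $\T$-analogue, Proposition~\ref{prop:main_T-pos}, follows your main line step by step: Murasugi decomposition, quasipositivity of the summands via Rudolph, negative summands reduced to a single band, which an isotopy then makes positive (the counterpart of your nugatory-crossing step). There the missing input is fiberedness: $F(B)$ is a fiber surface, hence the unique maximal-Euler-characteristic Seifert surface of $L$, hence isotopic to the quasipositive $F(A)$.

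For homogeneous diagrams this is unavailable. $\Sigma_D$ need not be a fiber surface ($5_2$ is positive but not fibered), and minimal genus Seifert surfaces need not be unique. Nor can you get by with passing $\chi_4=\chi$ down to the blocks, because that equality is sign-blind: negative special alternating links satisfy it too, being mirrors of positive links. Some sign-sensitive input is required.

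Your fallback supplies it, and is in substance the argument of \cite{abe_homog} and \cite{abe-tagami}. Once you use it, though, the Murasugi sums and Rudolph's theorem become superfluous. It should also be carried out as a computation rather than left at ``should do this'':
\begin{itemize}
\item For a connected homogeneous diagram $D$ one has $O_+(D)+O_-(D)=O(D)+1$, so the lower and upper bounds of Kawamura and Lobb coincide. They give $s(L)=w(D)-O(D)+2O_+(D)-1$, where $O_+(D)$ is the number of components of the Seifert graph after deleting the negative edges.
\item Strong quasipositivity gives $s(L)=1-\chi(L)$, and Cromwell gives $1-\chi(L)=c(D)-O(D)+1$.
\item Equating yields $c_-(D)=O_+(D)-1=\sum_i(v_i-1)$, summed over the negative blocks, where block $i$ has $v_i$ vertices and $e_i$ edges.
\item Since $c_-(D)=\sum_i e_i$ and $e_i\geq v_i-1$ with equality only for a single edge, every negative block is one crossing forming a bridge of the Seifert graph. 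That crossing is nugatory, and your conclusion goes through.
\end{itemize}

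One correction in your last step: for links $s$ is not odd under mirroring (the two-component unlink is amphichiral with $s=-1$). So obtain $s=-(1-\chi)$ for a negative block from the formula with $O_+=O$, not by mirroring.
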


Here, \emph{positive} links are those links that admit a diagram with only positive crossings. We refer to~\cite{baader_pos} for the definition of homogeneous links. 

\subsection{Staircase braids and cabling}

$\T$-positive links have previously been characterized by Rudolph~\cite{Rudolph_Hopf_plumbing} as those links that arise as the boundary of a positive Hopf-plumbed basket. That is, they have a Seifert surface that can be obtained from a disk~$D$ by iteratively plumbing positive Hopf bands, with each plumbing arc contained within the original disk $D$. See \Cref{sec:prelims}, particularly \Cref{theorem:Hopf_plumbed} in \Cref{subsubsec:baskets}, for more details. 
Banfield~\cite{banfield} provided yet another characterization of $\T$-positive links. 
A strongly quasipositive braid in~$B_n$ is called a \emph{staircase braid} if it contains a positive power of the dual Garside element $$\delta = \sigma_1 \sigma_2 \dots \sigma_{n-1} \in B_n.$$ Specifically, a staircase braid can be written as a product of positive BKL-generators containing the subword $\delta$; or, in other words, the exponent of $\delta$ in the dual Garside normal form of the braid (see~\cite[Theorem~3.10]{birmankolee}) is strictly positive.

\begin{theorem}[{\cite[Corollary 5.11]{banfield}}]\label{thm:banfield}
    A link is $\mathcal{T}$-positive if and only if it arises as the closure of a staircase braid. 
\end{theorem}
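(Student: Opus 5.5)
The plan is to go through Rudolph's characterization of $\T$-positive links as boundaries of positive Hopf-plumbed baskets (\Cref{theorem:Hopf_plumbed}) and prove each implication at the level of Bennequin-type surfaces of braids. The basic observation concerns the closure of $\delta=\sigma_1\cdots\sigma_{n-1}\in B_n$. It is the unknot, and its Bennequin surface (the $n$ stacked disks joined by the $n-1$ bands $\sigma_i$) is a disk $D$. More generally, for any espalier $T$ the $T$-generators, multiplied in a suitable order, give $\delta$ up to conjugation. This is the standard correspondence between noncrossing spanning trees and factorizations of the dual Garside element into BKL-generators. Each ordering of the edges compatible with the planar structure at each vertex gives a product conjugate to $\delta$.

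\emph{Staircase $\Rightarrow$ $\T$-positive.} Let $B=\delta\, w$, after a cyclic permutation, with $w$ a positive word in the $a_{i,j}$. The Bennequin surface of $\widehat{B}$ is $D$ together with one extra positively twisted band for each letter $a_{i,j}$ of $w$, attached between disks $i$ and $j$. I will show inductively, letter by letter, that attaching such a band amounts to plumbing a positive Hopf band onto the current surface along an arc contained in $D$. The arc is the core of the new band closed up by a path inside $D$: it goes from disk $i$ to disk $j$ through the bands $\sigma_i,\dots,\sigma_{j-1}$ of $\delta$. The key check is that this path can be chosen disjoint from the arcs used earlier, or at least so that the iterated plumbing remains a basket in Rudolph's sense, with all plumbing arcs in the original disk. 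Given this, $\widehat B$ bounds a positive Hopf-plumbed basket, and \Cref{theorem:Hopf_plumbed} gives $\T$-positivity.

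\emph{$\T$-positive $\Rightarrow$ staircase.} Let $B$ be $T$-positive. Since every $T$-generator occurs in $B$, I will use the BKL relations and conjugation to rearrange $B$ so that it contains $n-1$ consecutive letters forming a product of the edge generators equal to $\delta$. The relations used are commutation of generators with disjoint or nested edges and $a_{i,j}a_{j,k}=a_{j,k}a_{i,k}=a_{i,k}a_{i,j}$. The strategy is induction on $n$ via a leaf of $T$. Pick a leaf edge $e$ at the boundary vertex $1$ or $n$, or more generally an outermost edge. Move one occurrence of its generator next to a copy of the $\delta$-subword for the tree $T\setminus e$ on fewer strands. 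Letters of other edges are pushed past it by commutations. Where that fails, I apply the triangle relation, which replaces a pair by another pair still containing the needed letter and keeps the word BKL-positive, though no longer $T$-positive. Staircase only requires BKL-positivity, so this is harmless. I expect this rearrangement to be the main obstacle. The delicate point is arranging that the induction hypothesis applies to a subword, since intervening letters can link the leaf strand with the rest. The first fallback is to argue with surfaces instead. Starting from Rudolph's basket for $B$, I would isotope the disk $D$ into the standard braided position $\widehat{\delta}$, with each plumbing arc becoming the core of a BKL band. This yields a braid of the form $\delta\,w$ directly.
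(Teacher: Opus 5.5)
\textbf{Main gap.} The paper does not prove this statement; it cites Banfield. As the paper indicates, Banfield's route goes entirely through Rudolph's basket characterization. The braided surface of a staircase braid is a positive Hopf-plumbed basket (his Lemma 5.5). Conversely, a positive Hopf-plumbed basket with $m$ Hopf bands is re-braided as the surface of a staircase braid on $2m$ strands (his Lemma 5.7).

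Your primary argument for $\mathcal{T}$-positive $\Rightarrow$ staircase has a genuine gap. You try to turn a $T$-positive word in $B_n$ into a staircase word in the same $B_n$ by BKL relations and conjugation, and this is false in general. Take the star espalier with edges $(1,2),(1,3),(1,4)$ and $\beta=a_{1,2}a_{1,3}a_{1,4}\in B_4$. Any BKL-positive word for a conjugate of $\beta$ has length $3$, the exponent sum. So it contains $\delta_4$ only if it equals $\delta_4=a_{1,4}a_{1,3}a_{1,2}$. But $\beta$ is not conjugate to $\delta_4$. Indeed, $\sigma_1^{-1}\beta\sigma_1=\sigma_1\sigma_2^2\sigma_3\sigma_2^{-1}$, and the reduced Burau representation gives trace $-t^2$ for this braid versus $-t$ for $\sigma_1\sigma_2\sigma_3$.

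So the obstacle you flag is fatal, not merely delicate: the induction hypothesis for $T\setminus e$ only yields a conjugate of a subword, and that conjugation does not localize. In general the staircase representative of a $\mathcal{T}$-positive link must live on a different number of strands, the counterpart of the paper's $10_{161}$ example.

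Your surface-level fallback is the right idea and is essentially Banfield's Lemma 5.7. As written, though, it is a single sentence and does not say how many strands $D$ is braided on; by the example above, $n$ strands cannot work. You need to re-braid, for instance as follows:
\begin{itemize}
\item braid $D$ as the surface of $\delta_{2m}$, with the $2m$ endpoints of the $m$ plumbing arcs on distinct strands;
\item realize each positive Hopf band as a single positive BKL band joining the strands of its arc's endpoints;
\item verify that the order in which the Hopf bands are plumbed is matched by the order of these bands along the braid, so the word is $\delta_{2m}$ followed by positive letters.
\end{itemize}

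\textbf{Staircase $\Rightarrow$ $\mathcal{T}$-positive.} This direction matches Lemma 5.5 in outline, but you postpone its only substantive step. The arcs cannot be chosen disjoint in general. For $\delta_4a_{1,3}a_{2,4}$ the endpoints of the two arcs interleave on $\partial D$, so the arcs must cross, and the validity of each plumbing has to come from the order of the bands.

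One clean way to carry out the induction is as follows. Cyclically rotate to $w\delta$, and write $\delta=a_{i,j}x$ for the last letter $a_{i,j}$ of $w$; this is possible because every BKL generator left-divides $\delta$. The last band of $w$ then sits next to a parallel copy of itself lying in $D$. It therefore deplumbs as a positive Hopf band along an arc through that copy, and what remains is the surface of $w'\delta$. Now induct on the length of $w$.
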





In light of \Cref{thm:banfield}, 
we can use staircase braids to study the effect of cabling on $\T$-positive knots. Given a knot $K$, its \emph{$(p, q)$-cable $K_{p,q}$} for coprime integers $p,q \geq 1$ is the satellite knot with pattern the torus knot $T_{p,q}$ and companion $K$. See, for example, \cite[Section 4D]{rolfsen_2003}. 

\begin{proposition}\label{prop:cables}
Let $p \geq 2$ and let $K$ be a knot which is the closure of a staircase braid $B = \delta P\in B_n$ for some BKL-positive word $P$. If $q \geq n$, then the cable knot~$K_{p,q}$ is represented by a staircase braid.
\end{proposition}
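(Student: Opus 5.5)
We write down an explicit braid for the cable and then rewrite it so that it visibly contains $\delta$. The standard braid for a cable is as follows. If $K$ is the closure of $B\in B_n$, then the $(p,q)$-cable $K_{p,q}$ is the closure of a braid on $pn$ strands. To build it, replace every strand of $B$ by $p$ parallel strands; call the result $B^{(p)}$, with blackboard framing for each BKL-generator band. Then append a $1/p$-twist on one bundle of $p$ strands, raised to the appropriate power.

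The framing needs care. The blackboard framing of the closure of $B$ equals the writhe $w(B)$, which for a BKL-positive word is its length $m$. So the cable is the closure of
\[
B^{(p)}\cdot (\sigma_1\sigma_2\cdots\sigma_{p-1})^{\,q-pw},
\]
where the twist acts on the first $p$ strands and $w=w(B)$. This exponent is negative when $q<pw$, so a naive argument fails. The first step is therefore to redistribute twisting differently.

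The substitute uses the staircase structure $B=\delta P$. The cabled version of $\delta=\sigma_1\cdots\sigma_{n-1}$ is $\delta^{(p)}$, and conjugation gives $(\delta^{(p)})^n=\Delta$-like full twists on all $pn$ strands. A cleaner route is to use $\delta_{pn}=\sigma_1\cdots\sigma_{pn-1}$ in $B_{pn}$. Its power $\delta_{pn}^{\,p}$ cyclically permutes bundles and is exactly a cabled $\delta_n$ composed with a $1/n$-fractional twist inside each bundle. More precisely, $\delta_{pn}^{p}=\delta_n^{(p)}\cdot\tau$, where $\tau$ distributes $p-1$ crossings' worth of twisting among the bundles.

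The plan is to absorb the framing correction into these powers of $\delta_{pn}$. Write the cable braid as $\delta_{pn}^{\,k}\cdot P^{(p)}\cdot R$ with $R$ BKL-positive. Here $\delta_{pn}^{k}$ supplies the cabled $\delta$ plus a certain amount of positive twisting on the bundles. Since cabled BKL-generators $a_{i,j}^{(p)}$ are products of $p^2$ positive BKL-generators in $B_{pn}$ (a band with parallel copies), $P^{(p)}$ is BKL-positive. One must check that the negative framing correction $-p\,w(P)$ can be cancelled by the twisting carried inside each cabled generator. For this, observe that a BKL-generator cabled with the \emph{surface} framing rather than the blackboard framing contributes no twist. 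The quasipositive Seifert surface of $B$ has framing zero on $K$, so cabling along it requires total twist $q$, with no subtraction.

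So the key computation is this. The $(p,q)$-cable of the closure of a BKL-positive braid $B$ with bands is the closure of $B^{[p]}\cdot(\text{twist})^{q}$, where $B^{[p]}$ replaces each band $a_{i,j}$ by the $p$-parallel band in surface framing. The aim is to show $B^{[p]}$ is itself a product of BKL-generators in $B_{pn}$; $p$ parallel copies of a twisted band give $p$ copies of $a_{i,j}$ type bands, with framing compensated by the bundle twists.

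Then the $q$ positive twisting crossings $(\sigma_1\cdots\sigma_{p-1})^q$, together with the cabled $\delta$ with $q\ge n$, combine into a factor $\delta_{pn}$. I expect this to follow by showing that $\delta_n^{[p]}$ times $n$ fractional twists $(\sigma_{1}\cdots\sigma_{p-1})$, distributed one per bundle via conjugation by $\delta_n^{[p]}$, equals $\delta_{pn}$. This identity is the main obstacle. I would verify it on small cases by drawing, and then prove it by induction on $n$ using $a_{i,j}$ relations. The remaining $q-n\ge0$ twists stay as positive factors.

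Finally, combining these gives a BKL-positive word containing $\delta_{pn}$, that is, a staircase braid. If the required identity needs fractional twists on specific bundles, I would move them using conjugation by $\delta_{pn}$ and the cyclic relations.
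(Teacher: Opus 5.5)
Your route is the paper's: cable along the Seifert framing of the quasipositive surface, so that each band $a_{i,j}$ becomes the $p$ parallel bands $\prod_{k=0}^{p-1}a_{pi-k,pj-k}$ as in \eqref{eq:cabling}; insert $q$ positive $(1/p)$-twists; then use $n$ of them together with the cabled $\delta_n$ to produce $\delta_{pn}$. The first two steps are fine. The blackboard detour and the formula $\delta_{pn}^p=\delta_n^{(p)}\tau$ can be dropped; incidentally, $\tau$ there has exponent sum $p(p-1)$, a full twist on one bundle, not $p-1$.

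The gap is the step you flag as the main obstacle: the identity you propose to prove is false. $\delta_n^{[p]}$ is a product of $(n-1)p$ positive BKL-generators, and the $n$ twists contribute $n(p-1)$ more. So your left-hand side has exponent sum $2np-n-p$, while $\delta_{pn}$ has exponent sum $pn-1$. The difference is $(n-1)(p-1)>0$ for all $n,p\ge 2$, and exponent sum is invariant under the braid relations and conjugation, so no induction on $n$ can establish your identity. Already for $n=p=2$, using $a_{1,3}a_{1,2}=a_{1,2}a_{2,3}$ and $\delta_4a_{1,3}\delta_4^{-1}=a_{2,4}$, one gets
\[
a_{2,4}a_{1,3}\cdot a_{1,2}a_{3,4}=a_{2,4}\,a_{1,2}a_{2,3}a_{3,4}=a_{2,4}\,\delta_4=\delta_4\,a_{1,3},
\]
which is $\delta_4$ followed by one extra band, not $\delta_4$.

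What is true, and what the paper shows, is this: $\delta_n^{[p]}$ with $n$ suitably placed positive $(1/p)$-twists equals $\delta_{pn}$ followed by $(n-1)(p-1)$ further positive long bands, namely $p-1$ parallel bands for each letter of $\delta_n$. The paper gets this from an isotopy that pulls the top-most strand across, so that the word begins with $\sigma_1\cdots\sigma_{pn-1}$. The remaining $p-1$ strands of each cabled letter become long positive bands, and exactly $n$ negative $(1/p)$-twists are left over; the hypothesis $q\ge n$ is precisely what cancels them. Since you only need a BKL-positive word containing $\delta_{pn}$, this corrected statement suffices. Your argument then goes through, with the leftover word being these long bands, then $P^{[p]}$, then $q-n$ positive twists. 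As written, however, the one step where $q\ge n$ enters is both misformulated and unproved, so the proposal does not yet prove the proposition.
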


Sufficient conditions like the one in \Cref{prop:cables} for a cable knot to fulfill a certain positivity notion are fairly well understood for strong quasipositivity, positivity and braid positivity. See \cite[Theorem~5.1]{kmms} for a concise summary. In particular, if $K$ satisfies the conditions of \Cref{prop:cables}, then the cable knot~$K_{p,q}$ is strongly quasipositive and fibered if and only if $q \geq 1$~\cite{hedden:cabling}. Thus~$q \geq 1$ is a necessary condition in \Cref{prop:cables}. We ask whether 
this can be upgraded to~$q \geq n$. 

\begin{question}
Is the ``only if'' direction of \Cref{prop:cables} true as well? That is, if $p \geq 2$ and $K$ is the closure of a staircase braid on $n$ strands such that $K_{p,q}$ is represented by a staircase braid as well, must~$q \geq n$?
\end{question}

The positive trefoil $T_{2,3}$ gives an example of a knot where the condition $q \geq n$ in \Cref{prop:cables} is in fact necessary; see \Cref{ex:cable_converse} for details. 
%
The equivalent characterizations of knots that are closures of staircase braids due to \cite{Rudolph_Hopf_plumbing,banfield} lead to versions of \Cref{prop:cables} for knots presented as the boundary of a positive Hopf-plumbed basket or as the closure of a $\T$-positive braid. For instance, if a knot~$K$ is the boundary of a positive Hopf-plumbed basket obtained from a disk by plumbing~$m$ positive Hopf bands, then there exists an associated staircase braid representative on $2m$ strands (see \cite[Proof of Lemma~5.7]{banfield}). According to \Cref{prop:cables}, the cable knot~$K_{p,q}$ is a staircase braid closure if~$p \geq 2$ and~$q \geq 2m$. By~\cite{Rudolph_Hopf_plumbing}, these cables arise as boundaries of positive Hopf-plumbed baskets. 

\subsection{\texorpdfstring{$\T$-positive}{T-positive} knots with small crossing number}

$\T$-positive knots are strongly quasipositive and fibered, by definition and by Rudolph's characterization as boundaries of positive Hopf-plumbed baskets~\cite{Rudolph_Hopf_plumbing}, respectively; see also \Cref{fiber_surf,Murasugi_sums}. We show that $\T$-positive knots are abundant among low-crossing-number knots in the following sense. 
\begin{proposition}\label{prop:sqp_fib_12_cross}
    All 42 strongly quasipositive, fibered knots with at most 12 crossings are $\T$-positive.
\end{proposition}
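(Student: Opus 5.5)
This is a finite verification, so the plan is to produce, for each of the 42 knots, an explicit staircase braid whose closure is that knot, and then invoke \Cref{thm:banfield}. First I will extract from KnotInfo the list of prime knots with at most 12 crossings that are both fibered and strongly quasipositive (taking chirality into account, so that we use the strongly quasipositive mirror). I will then confirm the count of 42. Since every $\T$-positive knot is strongly quasipositive and fibered, only the converse needs proof.

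Next I will sort the list into two groups.

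\textbf{Braid positive knots.} Many of these knots, such as the torus knots $T_{2,2k+1}$ and $T_{3,4}$, or $10_{139}$, $10_{152}$, $12n_{242}$, are braid positive. By the first inclusion in \eqref{eq:inclusions} they are $\T$-positive as non-split braid positive links. Alternatively, a positive braid word containing every $\sigma_i$ can be conjugated to contain $\delta=\sigma_1\cdots\sigma_{n-1}$. For the positive fibered knots that are not braid positive, a positive braid representative (e.g.\ from Stoimenow's or KnotInfo's tables) still helps, and I will check whether its closure is actually braid positive.

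\textbf{Remaining knots.} For the rest, I will start from a known strongly quasipositive braid word, for instance the BKL-positive words recorded in KnotInfo or obtained from quasipositive Seifert surfaces. The goal is to rewrite that word with the BKL relations until it contains $\delta$ as a subword. The tools are:
\begin{itemize}
\item the relations $a_{i,j}a_{j,k}=a_{j,k}a_{i,k}=a_{i,k}a_{i,j}$;
\item commutation of non-crossing bands;
\item conjugation;
\item positive stabilization, which preserves both strong quasipositivity and the closure.
\end{itemize}
A useful heuristic is that $\delta$ appears in a BKL-positive word as soon as the bands $a_{1,2},a_{2,3},\dots,a_{n-1,n}$, or any product forming a chain spanning all strands, can be brought adjacent in the right order. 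A cleaner alternative is to produce directly a $T$-positive word for a suitable espalier $T$, i.e.\ to exhibit a positive Hopf-plumbed basket as in \Cref{theorem:Hopf_plumbed}. A fibered strongly quasipositive surface of genus $g$ built from a disk with all plumbing arcs in the disk would then settle the knot.

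For each resulting word I will verify computationally, with SnapPy or the Jones and HOMFLY polynomials plus hyperbolic volume, that its closure is the intended knot. I will record the words in a table.

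\textbf{Main obstacle.} The hardest step is the handful of knots (likely among the 12-crossing nonalternating ones) for which the available strongly quasipositive braid contains no $\delta$ and no obvious rewriting produces one. In those cases I will first try adding a positive stabilization $a_{i,n+1}$ with suitably chosen $i$ and then applying the BKL relations. If that fails, I will run a computer search over the dual Garside normal form (\cite[Theorem~3.10]{birmankolee}) of conjugates and stabilizations, looking for one with positive infimum.
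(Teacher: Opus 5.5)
Your plan is essentially the paper's proof: take the 42 knots from KnotInfo, dispose of the 17 braid positive ones via the inclusions \eqref{eq:inclusions}, and for the rest exhibit either staircase words or positive Hopf-plumbed baskets, concluding with \Cref{thm:banfield} and \Cref{theorem:Hopf_plumbed}; the staircase words are obtained from KnotInfo's BKL-positive words by conjugation and the relations \eqref{rel:1}, \eqref{rel:2}. The only differences are that the paper settles the five braid index~$3$ knots at once via Stoimenow's result (\Cref{prop:braid_index_3}), and for eight of the remaining knots it uses SnapPy/Sage-verified positive Hopf-plumbed baskets (your ``cleaner alternative'') rather than stabilizations or a dual Garside search.
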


Since positive knots are strongly quasipositive \cite{Rudolph_positiveLinksSQP,nakamura}, as a consequence of \Cref{prop:sqp_fib_12_cross}, all 33 positive, fibered knots with at most 12 crossings are $\T$-positive.
In this context, we would like to highlight the following interesting open problem.

\begin{question}\label{question:pos_fib_T-pos}
    Are there positive, fibered knots which are not $\mathcal{T}$-positive? 
\end{question}

We discuss more on the context of \Cref{question:pos_fib_T-pos} in \Cref{ex:T-pos_and_pos}. In \Cref{cor:counterex_br_index}, we observe that there are no knots with braid index $2$ or $3$ that provide counterexamples to \Cref{question:pos_fib_T-pos}.\\

\textbf{Organization.} 
In \Cref{sec:prelims}, we review the basics of braids, braided Seifert surfaces, quasipositive surfaces, fiber surfaces, and Murasugi sums; make the definitions of $\T$-homogeneous and $\T$-positive braids more precise (\Cref{subsubsec:T-homo}); and compare $\T$-positivity to other positivity notions (\Cref{subsec:comparison}). \Cref{sec:characterisations} proves \Cref{theorem:main_T-pos} and \Cref{prop:braid_positive}, while \Cref{prop:cables} is proved in \Cref{sec:cabling}. \Cref{sec:crossing_nr} contains the proof of \Cref{prop:sqp_fib_12_cross}. \Cref{sec:further_results} discusses the relationship between $\T$-positivity and the unknotting number, the braid index, connected sums, visual primeness and positive trefoil plumbings. We end with two flowcharts in \Cref{fig:flowchart,fig:flowchart2} (\Cref{subsec:flowchart}) that summarize the various known implications and non-implications between the positivity notions studied in this paper.\\

\textbf{Acknowledgements.}
BB would like to thank Mikami Hirasawa for helpful discussions. PT would like to express her gratitude to her mathematical family, especially Sebastian Baader, Peter Feller, Livio Ferretti, Lukas Lewark, and Filip Misev, for teaching her the beauty of fiber surfaces. She would also like to thank Mark Kegel, Naageswaran Manikandan, and Diego Santoro for their interest and helpful discussions, as well as Arunima Ray for sharing her inspiring love of flowcharts.\\

\textbf{Grant support.} PT acknowledges support by the Swiss National Science Foundation Postdoc.Mobility fellowship 230329. She would like to thank the University of Glasgow for their hospitality and support. Part of this project was carried out during the first author's visit to and the second author's stay at the Max Planck Institute for Mathematics in Bonn. The authors would like to thank the institute for its hospitality and support.

\section{Preliminaries}\label{sec:prelims}

Let us make the definitions from the introduction more precise. Throughout, we will assume familiarity with (geometric) braids and their closures; see \eg \cite{birmanbrendle} for an introduction.

\subsection{BKL-generators of the braid group}\label{subsubsec:braids_present}

The braid group $B_n$ has a classical presentation in terms of $n-1$ generators $\sigma_1, \dots, \sigma_{n-1}$ which is due to Artin~\cite{artin_1925}. For our purposes, a different presentation due to Birman--Ko--Lee~\cite{birmankolee} is more convenient. In this presentation, there are generators~$a_{i,j}$ for~$1 \leq i < j \leq n$, called \emph{BKL-generators} or \emph{band generators}. The relations among them are 
\begin{align}
    a_{i,j}a_{k,\ell} &= a_{k,\ell} a_{i,j} && \text{if } (i-k)(i-\ell)(j-k)(j-\ell) > 0, \label{rel:1}\\
    a_{i,j}a_{j,k} &= a_{i,k}a_{i,j} = a_{j,k}a_{i,k} && \text{for all } i,j,k \text{ with } 1 \leq i < j < k \leq n.\label{rel:2}
\end{align}
In terms of the Artin generators $\sigma_i$, the BKL-generators can be expressed as
\begin{align*}
a_{i,j} = \left(\sigma_i \sigma_{i+1} \cdots \sigma_{j-2} \right) \sigma_{j-1} \left(\sigma_i \sigma_{i+1} \cdots \sigma_{j-2} \right)^{-1}  \quad \text{and} \quad a_{i,i+1} = \sigma_i.
\end{align*}

Words in the BKL-generators represent geometric braids. \Cref{fig:sqpbraidgen} shows the geometric braid which corresponds to the BKL-generator $\sigij$. We will refer to words in the BKL-generators as \emph{BKL-words}. 

\subsection{Braided Seifert surfaces}\label{subsubsec:braided_surf}

A \emph{Seifert surface} (for a link $L$) is an oriented, compact surface in~$S^3$ with no closed components (and oriented boundary~$L$). To each BKL-word $B \in B_n$, we can associate in a straightforward way a \emph{braided Seifert surface} in the sense of Rudolph; see \cite[Section 5.4]{Rudolph_Hopf_plumbing} and \cite[Section 2]{rudolph_braidedsurfaces}. Starting from~$n$ parallel disks, the Seifert surface is obtained by inserting a half-twisted band between the $i$th and $j$th disk for each BKL-generator $\sigij^{\pm 1}$ in the given word $B$, where the sign of the half-twist corresponds to the sign of the generator. Note that the topology of this surface does not change under the BKL-relations~\eqref{rel:1} and~\eqref{rel:2} (see \eg \cite[Figure~1]{birmankolee}). However, it does change under the trivial group relations $\sigij\sigij^{-1} = \sigij^{-1}\sigij = e$. We denote the braided Seifert surface associated to the braid word~$B$ by~$F(B)$. For an example, see \Cref{fig:seifsurf_T-homo} (bottom) for the braided Seifert surface associated to the BKL-word $B = a_{1,3}^2 a_{2,3}^2 a_{4,5}^2 a_{1,4}^{-3} a_{4,5}^2 a_{2,3} a_{1,3} a_{4,5}$ representing the geometric braid in \Cref{fig:seifsurf_T-homo} (top).
By slight abuse of notation, we often identify braid words and the corresponding geometric braids.

\begin{figure}[htbp] 
     \centering
     \def\svgwidth{0,9\columnwidth}
	 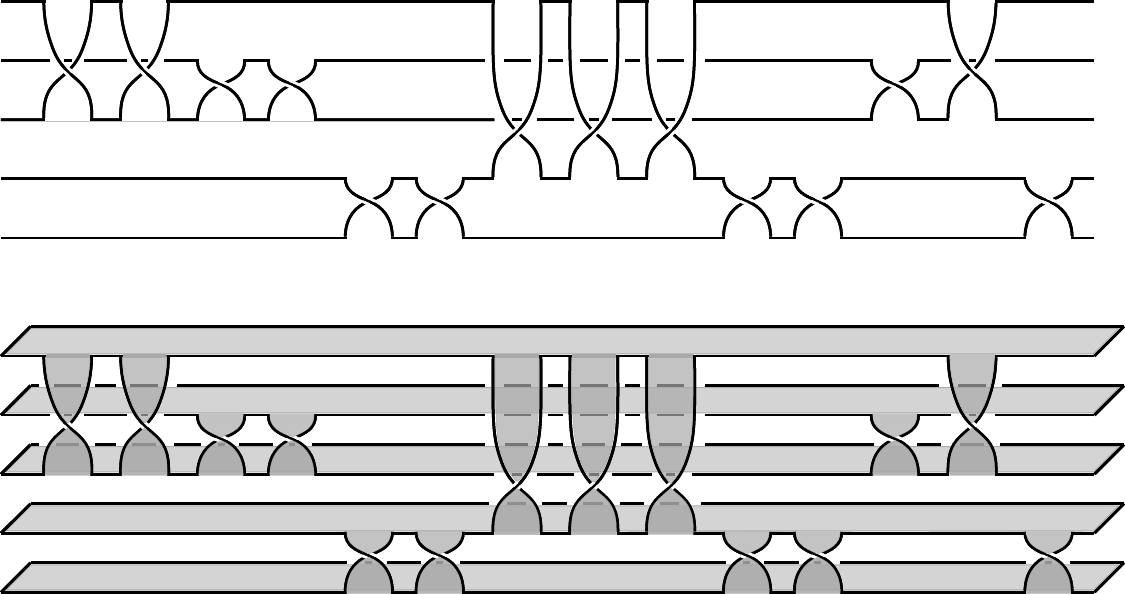 
	\caption{$\T$-homogeneous braid $B = a_{1,3}^2 a_{2,3}^2 a_{4,5}^2 a_{1,4}^{-3} a_{4,5}^2 a_{2,3} a_{1,3} a_{4,5}$ (top) and the associated braided Seifert surface $F(B)$ (bottom).}
    \label{fig:seifsurf_T-homo}
\end{figure}

\subsection{Quasipositive surfaces}\label{sec:qp}

Let us consider a special case of braided Seifert surfaces; see also~\cite{rudolph_braidedsurfaces,Rudolph_surfaces}. Suppose that a link~$L$ arises as the closure of a BKL-positive braid~$A = \prod_{k=1}^\ell a_{i_k, j_k} \in B_m$ for some $1 \leq i_k < j_k \leq m$. Then the braided surface~$F(A)$ associated with $A$ 
consists of $m$ disjoint parallel disks and~$\ell$~positively half-twisted bands connecting these disks. According to work of Bennequin~\cite{bennequin}, the Euler characteristic $\chi(F(A))=m-\ell$ of $F(A)$ realizes the maximal Euler characteristic $\chi(L)$ among all Seifert surfaces for~$L$. Any Seifert surface $F$ for $L$ is called \emph{quasipositive} if it is ambient isotopic (in $S^3$) to $F(A)$ for some strongly quasipositive braid $A$.

\subsection{\texorpdfstring{$\T$-homogeneous}{T-homogeneous} and \texorpdfstring{$\T$-positive}{T-positive} braids}\label{subsubsec:T-homo}

Following \cite{Rudolph_Hopf_plumbing}, an \emph{espalier} is a tree $T$ (a finite connected graph with no cycles) which can be embedded in the lower half-plane $\{y \leq 0\}\subset\R^2$ such that the $n \geq 1$ vertices of $T$ get mapped to the points $(j,0)\in \R^2$ for $j \in \{1, \dots,n\}$ and the interiors of the $n-1$ edges of $T$ get mapped disjointly to~$\{y < 0\}$. For an espalier~$T$, we denote by $V(T)=\{1, \dots,n\}$ and~$E(T)$ its vertex and edge set, respectively. To each edge $(i,j)\in E(T)$ between vertices $i,j \in V(T)$, we associate the BKL-generator $\sigij \in B_n$. As in \cite{banfield}, we denote by $G(T)=\{\sigij\}_{(i,j) \in E(T)}$ the set of all BKL-generators associated to edges of $T$, and call them the 
 \emph{$T$-generators}. Three examples of espaliers with associated generating sets are shown in \Cref{fig:espaliers}.

\vspace{1em}

\begin{figure}[htbp] 
	 \centering
     \def\svgwidth{1\columnwidth}
	 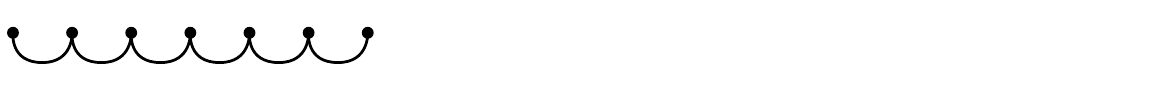
	\caption{From left to right: the linear graph $T_7$ and sample espaliers~$T_m$ and~$T_r$ on seven and five vertices, respectively. The associated generating sets for the espaliers are $G(T_7)=\{a_{1,2}, a_{2,3}, a_{3,4}, a_{4,5}, a_{5,6}, a_{6,7}\}$, $G(T_m)=\{a_{1,2}, a_{2,3}, a_{1,4}, a_{4,5}, a_{4,6}, a_{6,7}\}$ and $G(T_r)=\{a_{1,3}, a_{1,4}, a_{2,3}, a_{4,5}\}$.}
    \label{fig:espaliers}
\end{figure}

We call a braid~$B$ on $n$ strands \emph{$T$-homogeneous} if it can be represented by a word in the $T$-generators such that for all $(i,j) \in E(T)$, the $T$-generator~$\sigij$ appears at least once in this braid word and with either only positive or only negative powers. Moreover, $B$ is called \emph{$T$-positive} if all appearances of any of the $\sigij$ for $(i,j) \in E(T)$ are with only positive powers. Having made these definitions more precise, let us recall the following definitions from the introduction. A link is \emph{$T$-homogeneous} or \emph{$T$-positive} if it arises as the closure of a $T$-homogeneous or $T$-positive braid, respectively. Moreover, a link is $\T$-homogeneous or $\T$-positive if it is \emph{$T$-homogeneous} or \emph{$T$-positive} for some espalier~$T$, respectively. 

\begin{example}
    \Cref{fig:seifsurf_T-homo} (top) shows the $T_r$-homogeneous braid $B = a_{1,3}^2 a_{2,3}^2 a_{4,5}^2 a_{1,4}^{-3} a_{4,5}^2 a_{2,3} a_{1,3} a_{4,5}$ for the espalier $T_r$ from \Cref{fig:espaliers} (right). 
\end{example}

\begin{example}\label{ex:bp_T_n}
For the linear graph $T_n$ in $\R^2$ with vertices at the first $n$ integers on the positive real axis and edges from~$i$ to~$i+1$ for each $1 \leq i < n$ (see \Cref{fig:espalierT_n} and \Cref{fig:espaliers} (left)), the $T_n$-generators are precisely the BKL-generators $a_{i,i+1}=\sigma_i$, $1 \leq i < n$, so the classical Artin generators of $B_n$. Thus the $T_n$-positive braids are precisely the positive braids in~$B_n$ where each Artin generator $a_{i,i+1}=\sigma_i$ appears at least once. We call such positive braids \emph{strictly positive}. 
\end{example}

Every non-split braid positive link can be represented by a strictly positive braid, which, by \Cref{ex:bp_T_n}, implies the first of the inclusions in \eqref{eq:inclusions}. Since non-split braid positive links are $\mathcal{T}$-positive, the well-known families of \emph{positive torus links}~$T_{p,q}$, which are the closures of $(\sigma_1 \cdot \ldots \cdot \sigma_{p-1})^q$ for positive integers~$p, q \geq 1$, and \emph{algebraic links}, which are links of isolated singularities of complex algebraic plane curves~\cite{milnor_book}, are also $\mathcal{T}$-positive. 

\subsection{Fiber surfaces}\label{fiber_surf}

A Seifert surface $F$ is a \emph{fiber surface} and $\partial F = L$ a \emph{fibered} link if there is a fiber bundle $\varphi \colon S^3 \setminus \partial F \to S^1$ whose fibers are isotopic to the interior of $F$. Fiber surfaces are connected, incompressible, and realize the maximal Euler characteristic~$\chi(L)$ among Seifert surfaces for~$L$~\cite{stallings_1978}. Moreover, up to ambient isotopy a fiber surface~$F$ is the unique incompressible (and thus maximal Euler characteristic) surface with boundary $L = \partial F$. See~\cite[Proposition 2.19]{rudolph_handbook}, which cites \cite{stallings_1978,gabai_sum_I,gabai:foliations,gabai:arborescent}.

\subsection{Murasugi sums}\label{Murasugi_sums}

Finally, let us recall an---as Gabai phrases it \cite{gabai_sum_I,gabai_sum_II}---natural geometric operation on Seifert surfaces. Given two Seifert surfaces $F_1$ and $F_2$, their \emph{Murasugi sum}~$F_1 \ast F_2$ is obtained by first identifying via orientation-preserving diffeomorphisms some even-sided polygon $P$ with subsets of~$F_1$ and~$F_2$ such that for both~$F_i$, $i \in \{1,2\}$, the edges of~$P$ alternate between boundary arcs and proper arcs in $F_i$, and then gluing $F_1$ and~$F_2$ along an orientation-reversing diffeomorphism along $P$ (which identifies the proper arcs in $F_1$ with boundary arcs in $F_2$ and vice versa). See \eg \cite[Chapter~4.2]{kawauchi_book} for a precise definition.

\begin{proposition}[{\cite{stallings_1978,gabai_sum_I,gabai_sum_II}; see also 
\cite[Theorem 3.1]{gabai_det_fibred}}]\label{prop:Murasugi_sum_fib}
    Any Murasugi sum $F_1 \ast F_2$ of Seifert surfaces~$F_1$ and $F_2$ is a fiber surface (see \Cref{fiber_surf}) if and only if the two summands are. 
\end{proposition}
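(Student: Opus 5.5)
We will use the sutured-manifold reformulation of fibredness throughout. For a Seifert surface $F\subset S^3$, let $(M_F,\gamma_F)$ be its complementary sutured manifold: $M_F = S^3\setminus \mathrm{int}\,N(F)$, with $R_\pm(M_F)$ the two copies $F_\pm$ of $F$ and suture $\gamma_F = N(\partial F)$. Then $F$ is a fiber surface if and only if $(M_F,\gamma_F)$ is a product sutured manifold $(F\times I,\partial F\times I)$. By Stallings' fibration theorem this is equivalent to the following conditions:
\begin{itemize}
\item $M_F$ is irreducible;
\item $F$ is connected;
\item the inclusion $F_+\hookrightarrow M_F$ induces an isomorphism on $\pi_1$;
\item equivalently to the previous item, given injectivity (which holds for incompressible $F$), it induces a surjection on $\pi_1$.
\end{itemize}

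First we set up the geometry of a Murasugi sum $F=F_1\ast F_2$ along the $2k$-gon $P$. There is a $2$-sphere $S\subset S^3$ bounding balls $B_1,B_2$ with $F_i\subset B_i$ and $S\cap F=P$. The sphere $S$ meets $M_F$ in two $2k$-gon disks $D_+$ and $D_-$, lying just above and just below $P$. Each $D_\pm$ meets the sutures in $2k$ points, and its boundary alternates between arcs in $R_+$ and arcs in $R_-$. Cutting $M_F$ along $D_+\cup D_-$ produces two pieces, $M_F\cap B_1$ and $M_F\cap B_2$. Following Gabai, we check that these pieces are exactly the complementary sutured manifolds $(M_{F_1},\gamma_{F_1})$ and $(M_{F_2},\gamma_{F_2})$. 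The key point is that in $B_1$ the proper arcs of $P$ in $F_1$ are boundary arcs of $F_2$. The region of $B_1$ "above $P$" therefore opens up into the complement of $F_1$, and the sutures match.

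\textbf{Forward direction (summands fibered $\Rightarrow$ sum fibered).} We take Stallings' constructive route. Let $h_i$ be the monodromy of $F_i$, isotoped to be the identity near $P$ on the side that becomes a boundary arc in the sum. Extend each $h_i$ by the identity over $F\setminus F_i$. We then build the fibration of $S^3\setminus\partial F$ by gluing a product structure on $M_{F_1}$ to one on $M_{F_2}$ along the disks $D_\pm$. This works because both product structures restrict to the standard product structure on the disks, which are product regions $(\text{arc})\times I$ after a small isotopy. The result is a fibration with fiber $F$ and monodromy $h_1\circ h_2$.

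\textbf{Converse (sum fibered $\Rightarrow$ summands fibered).} This is the main obstacle. Decomposing a product sutured manifold along a non-product disk need not give products a priori, so the argument has to go through the Stallings criterion rather than through the decomposition alone. Assume $F$ is a fiber surface.
\begin{enumerate}
\item Each $F_i$ is connected, since $F$ is connected and $P$ is connected.
\item Each $F_i$ is incompressible, hence each $M_{F_i}$ is irreducible and $\pi_1(F_{i,+})\to\pi_1(M_{F_i})$ is injective. Both facts follow from Gabai's result that a Murasugi sum is a minimal genus surface whenever it is incompressible, together with the additivity $\chi(F)=\chi(F_1)+\chi(F_2)-\chi(P)$ relation.
\item It remains to show that $\pi_1(F_{i,+})\to\pi_1(M_{F_i})$ is surjective. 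Apply van Kampen to $M_F=M_{F_1}\cup_{D_+\sqcup D_-}M_{F_2}$ and to $F_+ = F_{1,+}\cup F_{2,+}$ glued along disks. Any loop in $M_{F_1}$ is a loop in $M_F$, so it is homotopic into $F_+$. That homotopy is then pushed off $M_{F_2}$ using the retraction of $M_{F_2}$ onto a neighbourhood of $D_\pm$ relative to $R_+$ along the polygon.
\end{enumerate}
The delicate part of the argument is making this retraction precise. If it fails, we fall back on Gabai's argument via Thurston norm and depth-one taut foliations, which shows directly that $M_{F_i}$ is a product.
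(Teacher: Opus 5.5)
The paper gives no proof of this statement; it quotes Stallings for ``summands fibered $\Rightarrow$ sum fibered'' and Gabai for the converse. Judged on its own, your proposal has a genuine gap in the converse, and the trouble starts in the set-up.

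The sphere $S$ meets $M_F$ in a \emph{single} disk $D=S\setminus \mathrm{int}\,N(P)$, not in two disks $D_\pm$. Suppose the positive normal of $P$ points into $B_1$. Then $\partial D$ runs through $R_+(M_F)$ next to the sides of $P$ lying in $\partial F_1$, and through $R_-(M_F)$ next to the sides lying in $\partial F_2$. It crosses the suture $2k$ times.

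Cutting along $D$ yields $M_{F_1}\sqcup M_{F_2}$, so $\pi_1(M_F)=\pi_1(M_{F_1})*\pi_1(M_{F_2})$. The disk $D$ becomes part of $R_-(M_{F_1})$ but part of $R_+(M_{F_2})$. Correspondingly, $R_+(M_F)$ is $F_1^+$ glued to $(F_2\setminus P)^+$ along $k$ arcs, not $F_{1,+}\cup F_{2,+}$ glued along disks.

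For $k\ge 2$ this already breaks your forward direction. A disk meeting the suture $2k\ge 4$ times is not a product disk $(\text{arc})\times I$. In the product structures it is horizontal: $P\times\{0\}\subset R_-$ in $F_1\times I$ and $P\times\{1\}\subset R_+$ in $F_2\times I$. The correct argument stacks $F_1\times[1,2]$ on $F_2\times[0,1]$ along $P\times\{1\}$. This is $F\times[0,2]$ minus the boundary collars $(F_1\setminus P)\times[0,1)$ and $(F_2\setminus P)\times(1,2]$, hence a product. The stacking is also what produces the monodromy $h_1\circ h_2$.

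The decisive gap is Step~3 of the converse. Let $r\colon M_F\to M_{F_1}$ be the retraction collapsing $M_{F_2}$ into $D$; it exists because $D$ is contractible. Then $r(R_+(M_F))\subseteq R_+(M_{F_1})\cup D$. Since $D$ meets $R_+(M_F)\cap M_{F_1}$ in $k$ arcs, this set is homotopy equivalent to $F_1\vee\bigvee^{k-1}S^1$, and inside $M_{F_1}$ the disk $D$ lies in $R_-$, not $R_+$. So you only obtain surjectivity from this larger complex.

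A retraction of $M_{F_2}$ ``relative to $R_+$'' landing in $R_+(M_{F_1})$ would require extending a map from $R_+(M_{F_2})$ over all of $M_{F_2}$. You have no way to do that before knowing $M_{F_2}$ is a product. Falling back on Gabai's foliation argument is not a proof.

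The argument is rescued by using the opposite side for each summand:
\[
r(R_-(M_F))\subseteq (R_-(M_F)\cap M_{F_1})\cup D=R_-(M_{F_1}).
\]
Both $r_*$ and $\pi_1(R_-(M_F))\to\pi_1(M_F)$ are onto, so $\pi_1(R_-(M_{F_1}))\to\pi_1(M_{F_1})$ is onto. Symmetrically, use $R_+$ for $M_{F_2}$.

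Injectivity does not need Step~2, whose citation is wrong anyway; the relevant theorem of Gabai is that the summands of a minimal genus Murasugi sum have minimal genus. Instead, $\pi_1(M_{F_1})$ is a free factor of the free group $\pi_1(M_F)\cong\pi_1(F)$, and by Alexander duality its rank is $b_1(F_1)$. Hence the surjection from the free group $\pi_1(F_1)$ of the same rank is an isomorphism, because finitely generated free groups are Hopfian.

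Finally, $M_{F_1}$ is irreducible since $F_1$ is connected. Stallings' theorem (e.g.\ Hempel, Theorem~10.2) then gives $M_{F_1}\cong F_1\times I$, so $F_1$ is a fiber surface.
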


\begin{proposition}[{\cite{Rudolph_V}}]\label{prop:Murasugi_sum_qp}
    Any Murasugi sum $F_1 \ast F_2$ of Seifert surfaces $F_1$ and~$F_2$ is quasipositive (see \Cref{sec:qp}) if and only if the two summands are.
\end{proposition}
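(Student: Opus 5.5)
The proof follows Rudolph's characterization of quasipositive Seifert surfaces as \emph{full subsurfaces} of fiber surfaces of positive braid closures (equivalently, of positive torus links $T_{n,n}$). Here a subsurface $F\subset S$ is full if every simple closed curve in $F$ that bounds a disk in $S$ already bounds a disk in $F$. Two facts from Rudolph's work are taken as black boxes:
\begin{enumerate}[label=(\roman*)]
\item a Seifert surface is quasipositive if and only if it is ambient isotopic to a full subsurface of the fiber surface $F(\delta^k)$ of some positive torus link;
\item a full subsurface of a quasipositive surface is quasipositive.
\end{enumerate}
Both directions of the proposition are then reduced to producing suitable full embeddings.

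\textbf{The ``only if'' direction.} Suppose $F_1\ast F_2$ is quasipositive, glued along the polygon $P$. I would show that each summand $F_i$ is a full subsurface of $F_1\ast F_2$. Concretely, $F_1\subset F_1\ast F_2$ is obtained by deleting from $F_2$ everything outside $P$. There is a deformation retraction of $F_2$ onto a regular neighborhood of $P$ union collars, which shows that no essential curve of $F_1$ becomes inessential in the sum. The cleanest way to see this is via the decomposition of $H_1$, or of $\pi_1$, of a Murasugi sum as a free product, so that $\pi_1(F_1)\to\pi_1(F_1\ast F_2)$ is injective. Fact (ii) then gives that $F_1$ and $F_2$ are quasipositive.

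\textbf{The ``if'' direction.} Suppose $F_1$ and $F_2$ are quasipositive. Write each as a braided surface $F(A_i)$ for a strongly quasipositive braid $A_i$, and embed $F(A_i)$ fully into the fiber surface $S_i=F(\delta_{n_i}^{k_i})$ of a positive torus link. I would then extend the Murasugi sum along $P$ to a Murasugi sum $S_1\ast S_2$ that contains $F_1\ast F_2$ as a full subsurface, again using the free-product structure of $\pi_1$. Finally I would check that $S_1\ast S_2$ is quasipositive, by identifying it with the braided surface of a positive braid. The braids $\delta^{k_1}$ and $\delta^{k_2}$ live on strand sets that share the disks met by $P$; gluing identifies these shared disks, so the sum is the braided surface of a product of the two positive braids on the union of strands. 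Fact (ii) then finishes the argument.

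\textbf{Main obstacle.} The hard step is arranging the polygon $P$ in a standard position. Specifically, $P$ must lie inside the union of the $n_i$ base disks of each $S_i$, with its proper arcs crossing bands in a controlled way, so that the glued surface is visibly a positive braided surface. My first attempt would be to isotope the braided surfaces, using the BKL relations \eqref{rel:1} and \eqref{rel:2} and conjugation, so that $P$ meets only the disks and not the bands. A general $P$ may force stabilization: one adds strands and positive bands $a_{i,n+1}$, which does not change the surface up to isotopy. This puts $P$ inside a single disk region, and for a $2k$-gon it should reduce to the case of $k$ disk-sectors being identified.
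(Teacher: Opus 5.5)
The paper gives no proof of this proposition; it quotes it from Rudolph, so your argument has to stand on its own. Your ``only if'' half does, and it is the half the paper actually relies on (in the proof of Proposition~\ref{prop:main_T-pos}). Since $F_1\cap F_2=P$ is a disk, van Kampen gives $\pi_1(F_1\ast F_2)\cong\pi_1(F_1)\ast\pi_1(F_2)$, so each summand is a full subsurface of the sum. More directly, every component of $\overline{F_2\setminus P}$ contains an arc of $\partial F_2\setminus P\subset\partial(F_1\ast F_2)$. Full subsurfaces of quasipositive surfaces are quasipositive, which finishes this half. Your sentence claiming that $F_2$ deformation retracts onto a neighbourhood of $P$ union collars is not correct as stated, but you do not need it.

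The ``if'' half has a genuine gap. First, a full embedding $F_i\subset S_i=F(\delta^{k_i})$ does not by itself produce a Murasugi sum $S_1\ast_P S_2$. Two conditions are needed:
\begin{itemize}
\item the boundary edges of $P$ in $F_i$ must lie on $\partial S_i$; otherwise $S_i$ continues across an arc that is glued to a proper arc of the other summand, and the union is not a surface there;
\item $S_i$ must stay in the ball containing $F_i$.
\end{itemize}
Both can be arranged with some care when $S_i=F(A_iC_i)$ is obtained from $F(A_i)$ by adding the bands of a positive word $C_i$ with $A_iC_i=\delta^{k_i}$. You move the new attaching arcs off the boundary edges of $P$ and shrink the other summand into a thin half-ball over $P$. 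But then $S_i$ is just another BKL-positive braided surface in which $P$ sits exactly as it did in $F(A_i)$. You never exploit the torus-link structure of $S_i$, so as written the reduction gains nothing over working with $F(A_i)$ directly.

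Everything therefore rests on what you call the main obstacle. You need $P$ to be movable, through BKL-positive braided presentations, into a single disk, with its proper edges cutting off blocks of band attachments and its boundary edges on free arcs. Then the sum is a braided Stallings plumbing, whose word is a shuffle (not literally a product) of the two positive words on strand sets sharing one index, hence positive. This is the entire content of the ``if'' direction, and you give no argument for it. A positive stabilization $F(A)\to F(Aa_{i,n+1})$ adds a disk and a band but does not move $P$ at all. It is also not explained how band slides (relation \eqref{rel:2}) and conjugation would pull a polygon that threads several bands into one disk, e.g.\ a plumbing rectangle around a long proper arc.

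The paper itself shows this is not a mere normalization. By the discussion after Proposition~\ref{prop:conn_sum}, braided Stallings plumbings of $\mathcal{T}$-positive braided surfaces remain $\mathcal{T}$-positive. Yet Misev's positive Hopf plumbings include a Murasugi sum of a $\mathcal{T}$-positive fiber surface with a Hopf band that is not $\mathcal{T}$-positive. So a Murasugi polygon cannot in general be brought into braided-Stallings position within a prescribed class of positive presentations.

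What is missing is a lemma showing that every Murasugi sum of quasipositive surfaces is isotopic to (a full subsurface of) a braided Stallings plumbing of BKL-positive braided surfaces, or some other proof that the sum is quasipositive. Without it, the ``if'' direction is unproved.
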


A direct corollary of \Cref{prop:Murasugi_sum_fib,prop:Murasugi_sum_qp} is that the link $\partial(F_1 \ast F_2)$ is strongly quasipositive or fibered if and only if both of the links $\partial F_1$ and $\partial F_2$ are strongly quasipositive or fibered, respectively. A special case of Murasugi sums is \emph{plumbing}, when the polygon~$P$ is a rectangle, and \emph{positive} or \emph{negative} \emph{Hopf plumbing}, when in addition one of the involved Seifert surfaces is a fiber surface for the torus link~$T_{2,\pm 2}$, respectively. 

\subsection{Comparison to other positivity notions}\label{subsec:comparison}

In this subsection, we 
compare $\T$-positivity to other notions of link positivity, many of which we have already encountered above. 
\Cref{fig:flowchart,fig:flowchart2} in \Cref{subsec:flowchart} summarize the various positivity notions and their relations.

\subsubsection{Positive Hopf-plumbed baskets}\label{subsubsec:baskets}
A Seifert surface is called a \emph{Hopf-plumbed basket} if it can be obtained from a disk~$D$ by iteratively plumbing Hopf bands such that each plumbing arc is contained in the initial disk $D$. 
It is a \emph{positive Hopf-plumbed basket} if all involved Hopf bands are positive. 
As a consequence of \Cref{prop:Murasugi_sum_fib}, Hopf-plumbed baskets, as special cases of iterative Hopf plumbings, are fiber surfaces and their boundaries are fibered links.

\begin{theorem}[{\cite{Rudolph_Hopf_plumbing}}]\label{theorem:Hopf_plumbed}
    A link is $\mathcal{T}$-positive if and only if arises as the boundary of a positive Hopf-plumbed basket. In particular, $\mathcal{T}$-positive links are fibered.
\end{theorem}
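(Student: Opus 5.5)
The plan is to prove both directions by working directly with the braided Seifert surface $F(B)$ of \Cref{subsubsec:braided_surf}. The fiberedness claim then follows at once: a positive Hopf-plumbed basket is obtained from a disk (a fiber surface for the unknot) by iterated plumbing of positive Hopf bands (fiber surfaces). So by \Cref{prop:Murasugi_sum_fib} it is a fiber surface, and its boundary is fibered.

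\emph{$\T$-positive $\Rightarrow$ basket boundary.} Let $B$ be a $T$-positive word for an espalier $T$ on $n$ vertices, and consider $F(B)$: $n$ stacked disks $D_1,\dots,D_n$ and one positively half-twisted band per letter.
\begin{enumerate}
\item For each edge $(i,j)\in E(T)$, select one occurrence of $a_{i,j}$ in $B$. Let $D$ be the union of the $n$ disks and the $n-1$ selected bands. Since $T$ is a tree, $D$ is connected with $\chi(D)=n-(n-1)=1$, so $D$ is a disk.
\item The planarity of the espalier (edges drawn disjointly in the lower half-plane) is what should make $D$ an \emph{unknotted} flat disk. Concretely, the selected bands can be pushed into a common level where they are pairwise disjoint, so $D$ is a thickened planar tree.
\item Induct on the number of unselected letters. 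Adding a further letter $a_{i,j}$, with $(i,j)\in E(T)$, should amount to plumbing a positive Hopf band onto the current surface. The plumbing arc is the core of the selected $a_{i,j}$-band, extended into $D_i$ and $D_j$; this arc lies in $D$. A parallel second positive half-twisted band between the same two disks, together with a neighbourhood of that arc, forms a positive Hopf band.
\end{enumerate}

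The main obstacle is step 3: the new band is not literally adjacent to the selected one in the braid word. In between there may be other $T$-letters. Those with disjoint, non-nested endpoints commute by relation \eqref{rel:1}. Those sharing a vertex with $(i,j)$ do not commute, and there relation \eqref{rel:2} changes generators.

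To handle this, I would not fix the selected occurrences in advance. Instead I would build $D$ and the plumbings simultaneously, reading $B$ from left to right. The first occurrence of each $T$-generator is a tree band of $D$. Each later occurrence of $a_{i,j}$ is plumbed along an arc in $D$ joining $D_i$ to $D_j$, routed through the most recent $a_{i,j}$-band. The planarity of the edges of $T$ guarantees that the vertices strictly between $i$ and $j$ only carry edges nested under $(i,j)$. Hence the intervening bands at $D_i$ and $D_j$ lie on one side and do not obstruct the arc. I would then check on the surface that this arc is disjoint from the previously attached Hopf bands, except through $D$.

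\emph{Basket boundary $\Rightarrow$ $\T$-positive.} Given a basket, isotope the plumbing arcs in $D$ to chords of a round disk whose endpoints lie in distinct arcs of $\partial D$. Choose $n$ marked boundary segments so that each plumbing arc joins two segments. Thicken a spanning tree of these chords to present $D$ as $n$ disks joined by bands along an espalier $T$ (the chord diagram is planar). Each plumbed positive Hopf band then becomes an extra positive band parallel to a tree edge, giving a $T$-positive braided surface whose closure is the boundary.
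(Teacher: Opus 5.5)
Your deduction of fiberedness is correct and matches the paper: a basket is an iterated Hopf plumbing on a disk, so \Cref{prop:Murasugi_sum_fib} applies. The paper does not reprove the equivalence itself; it quotes it from Rudolph. Your sketch of the equivalence has genuine gaps in both directions.

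In the direction $\T$-positive $\Rightarrow$ basket, everything rests on step~3, and your fix does not supply it.

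\emph{The fix is inconsistent.} Since $(i,j)$ is an edge of the tree, every proper arc in $D$ from $D_i$ to $D_j$ must cross the cocore of the \emph{selected} $a_{i,j}$-band, and such an arc is determined up to isotopy by its endpoints. An arc ``routed through the most recent $a_{i,j}$-band'' instead crosses a previously plumbed Hopf band, so it is not contained in $D$. What you would obtain is an iterated Hopf plumbing, not a basket, and iterated positive Hopf plumbings need not be baskets (cf.\ Misev's examples in \Cref{ex:T-pos_and_pos}).

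\emph{The justification is a non sequitur.} The fact that vertices strictly between $i$ and $j$ carry only edges nested under $(i,j)$ concerns the intermediate disks. But the letters that fail to commute with $a_{i,j}$ are exactly those sharing the endpoint $i$ or $j$, and at $D_i$ these can attach from both sides. For example, take the espalier with edges $(1,2),(2,3),(2,4)$ and the $T$-positive word $a_{2,4}a_{1,2}a_{2,3}a_{2,4}$. Between the two copies of $a_{2,4}$, the letter $a_{1,2}$ meets $D_2$ from the $D_1$-side and $a_{2,3}$ meets it from the $D_3$-side. So ``the intervening bands lie on one side'' is false in general.

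What must actually be produced, for each added band, is a disk with these properties:
\begin{itemize}
\item it is bounded by the plumbing arc together with the core of the new band;
\item its interior is disjoint from the current surface;
\item it meets that surface from a single side along the whole arc, including across the half-twisted tree band.
\end{itemize}
This has to account for the fact that arcs in $D$ for later bands typically cross those of earlier ones, so the side and order of plumbing matter. That is the substance of Rudolph's theorem, and ``I would then check on the surface\dots'' defers exactly this. (Incidentally, planarity plays no role in $D$ being unknotted; every embedded disk in $S^3$ is.)

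The converse is likewise not yet an argument.
\begin{itemize}
\item ``The chord diagram is planar'' is false: plumbing arcs of a basket typically cross. For instance, the two arcs of a genus-one basket bounding the trefoil must cross, since disjoint arcs give a boundary connected sum of two Hopf bands, whose boundary has three components.
\item Choosing segments and a spanning tree of the chord graph does not make every remaining Hopf band parallel to a tree band. Take three disjoint chords joining the points $1,2$, the points $3,4$ and the points $5,6$ (in cyclic order on $\partial D$), with segments $\{6,1\},\{2,3\},\{4,5\}$. The chords form a triangle, and the one left out of any spanning tree joins two vertex disks that are not adjacent in the tree.
\item The segment model is itself off: a vertex disk of degree $d\geq 1$ meets $\partial D$ in $d$ arcs, not in one segment.
\end{itemize}
What is needed is a system of pairwise disjoint cocores cutting $D$ into $n$ disks, such that every plumbing arc crosses exactly one cocore and the dual tree is an espalier. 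On top of that, you need an argument that the resulting disks and bands can be isotoped into braided position, with a single linear order of all bands that reproduces the prescribed plumbing order and sides. Only then is the surface $F(B)$ for a $T$-positive word $B$.
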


The various characterizations of $\T$-positive links can be summarized as follows: 
    \begin{align}\label{eq:summary}
        \begin{split}
            & \{ \text{strongly quasipositive links}\} \cap \{\T\text{-homogeneous links} \}         \\         \stackrel{\text{\Cref{theorem:main_T-pos}}}{=} &\{\T\text{- positive links} \}
            \\\stackrel{\text{\Cref{thm:banfield}}}{=}& \{ \text{links that are closures of staircase braids} \} \\
            \stackrel{\text{\Cref{theorem:Hopf_plumbed}}}{=} 
             & \{ \text{links that are boundaries of positive Hopf-plumbed baskets}\}. 
        \end{split}
    \end{align}

\subsubsection{\texorpdfstring{$\T$-positivity}{T-positivity} and positivity}\label{ex:T-pos_and_pos}
Let us provide two examples of knots which are $\T$-positive, but not positive. The mirrors of the knots $10_{145}$ and $12n_{642}$, denoted $K_1 \defeq m(10_{145})$ and $K_2 \defeq m(12n_{642})$, are strongly quasipositive and fibered, but not positive \cite{knotinfo}. The latter follows from a computation of the Conway polynomials of the knots, which by a result of Cromwell~\cite[Corollary~5.1]{cromwell:homo} shows that~$K_1$ and $K_2$ are not even homogeneous. Both knots are however $\mathcal{T}$-positive by \Cref{theorem:Hopf_plumbed}. Indeed, descriptions as positive Hopf-plumbed baskets for Seifert surfaces for $K_1$ and $K_2$ can be read off from \Cref{fig:fiber_surf}. Alternatively, $\T$-positivity of $K_1$ was already observed by Banfield; see \cite[Lemma~5.14]{banfield}. 
\begin{figure}[htbp] 
	 \centering
     \includegraphics[width=1\textwidth]{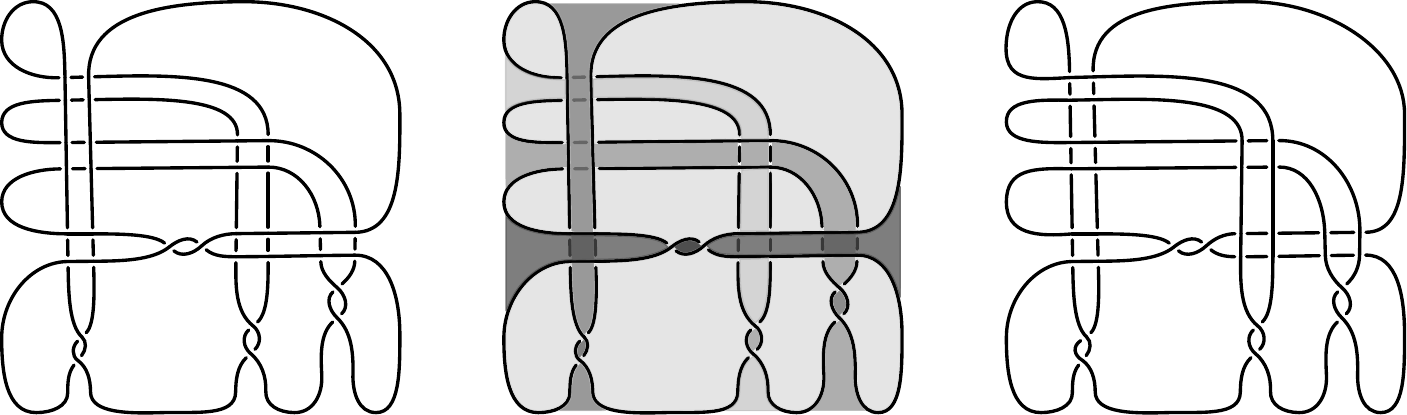}
    \caption{The knot $K_1 = m(10_{145})$ (left) and its genus two Seifert surface as positive Hopf-plumbed basket (middle). The knot $K_2 =m(12n_{642})$ (right) admits a similar Seifert surface of genus two (not drawn) where only the order of plumbing of the four positive Hopf bands differs.
    }
    \label{fig:fiber_surf}
\end{figure}

Positive knots are not necessarily $\T$-positive either, as can easily be seen by considering positive knots that are not fibered. Examples of such knots are plenty~\cite{knotinfo}, the easiest in terms of crossing number being $5_2$. In this context, we would like to highlight the following interesting open problem. 

\begin{question}[{\Cref{question:pos_fib_T-pos}}]\label{question:pos_fib_T-pos_sec}
    Are there positive, fibered knots which are not $\mathcal{T}$-positive? Equivalently, are there positive, fibered knots which do not arise as the boundary of a positive Hopf-plumbed basket?
\end{question}

Positive, fibered knots arise as the boundaries of positive Hopf plumbings. 
This follows from work of Cromwell~\cite[Theorem~5 and its corollaries]{cromwell:homo}; see also \cite[Figure~2]{kegel2024unknottingfiberedpositiveknots}. There are examples of knots that are the boundary of a positive Hopf plumbing but not of a positive Hopf-plumbed basket; that is, they are not $\T$-positive. Indeed, there are infinitely many such knots, as we shall explain now. Recall that the \emph{genus} of a knot~$K$, $g(K)$, is the minimal genus of any Seifert surface for $K$. For every $g \geq 2$, Misev~\cite{misev} constructed infinitely many knots that arise as the boundary of a positive Hopf plumbing of genus $g$. In contrast, the number of Hopf-plumbed baskets of a given genus is finite, which implies the claim. 
However, \Cref{question:pos_fib_T-pos_sec} asks for something stronger to be true. 
In \Cref{prop:pos_fib_small_crossing_nr} in \Cref{sec:crossing_nr}, we show that there are no positive, fibered knots with at most~12 crossings which provide counterexamples to \Cref{question:pos_fib_T-pos_sec}, since they are all $\T$-positive.

%

\subsubsection{\texorpdfstring{$\T$-positivity}{T-positivity} and braid positivity}\label{T-pos_braid_pos}

We have seen in \Cref{ex:T-pos_and_pos} that there exist $\T$-positive knots that are not positive and thus not braid positive. The knots $\mathcal{K}_n$, $n \geq 1$, from \cite[Proposition~5.3]{kegel2024unknottingfiberedpositiveknots} provide an infinite family of knots that are positive and fibered, but not braid positive. These knots are indeed $\T$-positive. This can be seen from \Cref{theorem:Hopf_plumbed} using their Seifert surfaces given in the proof of~\cite[Proposition~5.3 (1)]{kegel2024unknottingfiberedpositiveknots}. In particular, using \cite[Figure~9]{kegel2024unknottingfiberedpositiveknots}, note that the Seifert surface for $\mathcal{K}_n$ can be obtained from a disk by first plumbing the $3 \cdot (2n+1)$ positive Hopf bands corresponding to the $2n+1$ copies of the tangle~$T$ from \cite[Figure~8]{kegel2024unknottingfiberedpositiveknots}, and then additionally plumbing $4n+1$ positive Hopf bands corresponding to the fiber surface of the torus knot $T_{2,4n+2}$. So the Seifert surface of~$\mathcal{K}_n$ is a positive Hopf-plumbed basket and $\mathcal{K}_n$ thus $\T$-positive by \Cref{theorem:Hopf_plumbed}. As a consequence, we obtain the following.
\begin{proposition}
\label{prop:inf_T-pos_not_bp}
    There are infinitely many $\T$-positive knots that are not braid positive.
\end{proposition}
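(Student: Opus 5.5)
The family $\mathcal{K}_n$, $n \geq 1$, from \cite[Proposition~5.3]{kegel2024unknottingfiberedpositiveknots} will serve as the infinite family. Two things have to be checked: each $\mathcal{K}_n$ is $\T$-positive and not braid positive, and infinitely many of the $\mathcal{K}_n$ are pairwise distinct.

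For $\T$-positivity I will invoke \Cref{theorem:Hopf_plumbed}, using the observation made just before the statement. The Seifert surface of $\mathcal{K}_n$ shown in \cite[Figure~9]{kegel2024unknottingfiberedpositiveknots} is obtained from a single disk $D$ in two stages. First one plumbs the $3(2n+1)$ positive Hopf bands coming from the $2n+1$ copies of the tangle $T$. Then one plumbs the $4n+1$ positive Hopf bands making up the fiber surface of $T_{2,4n+2}$. The point to verify is that every plumbing arc lies in the original disk $D$ and not in a previously attached band. If so, the surface is a positive Hopf-plumbed basket, and \Cref{theorem:Hopf_plumbed} gives that $\mathcal{K}_n$ is $\T$-positive. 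That $\mathcal{K}_n$ is not braid positive is part of \cite[Proposition~5.3]{kegel2024unknottingfiberedpositiveknots}, which I will quote directly.

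For infinitude I will use that the surface above is a fiber surface built from $m = 3(2n+1)+4n+1 = 10n+4$ positive Hopf bands, so its first Betti number is $10n+4$. Since fiber surfaces realize the maximal Euler characteristic (\Cref{fiber_surf}), this gives $2g(\mathcal{K}_n) = 10n+4$, that is $g(\mathcal{K}_n) = 5n+2$. This is strictly increasing in $n$, so the $\mathcal{K}_n$ are pairwise distinct knots.

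The only step that is not mere citation is checking that all plumbing arcs lie in the initial disk. This is where the basket condition could fail, since iterated Hopf plumbings need not be baskets. My plan is to read the figure as a disk with two families of bands attached along disjoint arcs of $D$. The $T_{2,4n+2}$ part is the standard linear basket of $4n+1$ bands, each attached along an arc in $D$. The tangle bands attach near the boundary of $D$ in separate regions, so their plumbing arcs are also in $D$.
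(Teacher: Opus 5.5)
Your proposal is correct and matches the paper's argument. Like the paper, you read off from \cite[Figure~9]{kegel2024unknottingfiberedpositiveknots} that the Seifert surface of $\mathcal{K}_n$ is a positive Hopf-plumbed basket, built from the $3(2n+1)$ tangle bands followed by the $4n+1$ bands of the $T_{2,4n+2}$ fiber surface, then apply \Cref{theorem:Hopf_plumbed} and quote non-braid-positivity from \cite[Proposition~5.3]{kegel2024unknottingfiberedpositiveknots}. Your genus count $g(\mathcal{K}_n)=5n+2$, which guarantees the knots are pairwise distinct, is a harmless addition that the paper leaves implicit in the citation.
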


\subsubsection{\texorpdfstring{$\T$-positivity}{T-positivity} and strong quasipositivity}\label{T-pos_sqp}

$\T$-positive knots are strongly quasipositive and fibered, by definition and \eg by \Cref{theorem:Hopf_plumbed}, respectively. However, the converse is not true. The $(2,1)$-cable of the positive trefoil, $(T_{2,3})_{2,1}$, provides an example.
Indeed, this knot is not the boundary of a positive Hopf-plumbed basket since by work of Melvin--Morton~\cite{melvin_morton} its fiber surface does not deplumb a Hopf band. 
In fact, there are infinitely many strongly quasipositive, fibered knots that are not $\T$-positive. The knots constructed in~\cite{misev} give rise to such an infinite family for every~$g \geq 2$, where $g$ is the genus of the knots; see also the discussion in \Cref{ex:T-pos_and_pos}.

\newpage

\section{Equivalent characterization of \texorpdfstring{$\T$-positive}{T-positive} links}\label{sec:characterisations}

In this section, we prove the following proposition.

\begin{proposition}\label{prop:main_T-pos}
Let $T$ be an espalier. Then the $T$-positive links are precisely the strongly quasipositive links that are $T$-homogeneous.
\end{proposition}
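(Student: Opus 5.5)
One inclusion is immediate. A $T$-positive braid is by definition $T$-homogeneous, since every $T$-generator appears and only with positive exponents. It is also a BKL-positive word, so its closure is strongly quasipositive. The real content is the converse: let $B$ be a $T$-homogeneous braid word in $B_n$ whose closure $L$ is strongly quasipositive, and show that every $T$-generator occurs in $B$ with positive sign.

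\textbf{Step 1: the surface $F(B)$ is a fiber surface.} The plan is to identify $F(B)$ as an iterated plumbing of positive and negative Hopf bands, in the spirit of Rudolph's proof of \Cref{theorem:Hopf_plumbed}. Since $T$ is a tree with $n$ vertices and $n-1$ edges, each edge appearing at least once, the surface is a \emph{basket-like} surface. Concretely, the $n$ disks joined by one band per edge of $T$ form a single disk $D$. Each further occurrence $a_{i,j}^{\epsilon}$ adds a band parallel to an existing one of the same sign; that is, it plumbs a Hopf band of sign $\epsilon$ along an arc in $D$. So $F(B)$ is a Hopf-plumbed basket, hence a fiber surface by \Cref{prop:Murasugi_sum_fib}.

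Its Euler characteristic is $n-\ell$, where $\ell$ is the word length. Equivalently,
\[
-\chi(F(B)) = \sum_{(i,j)\in E(T)} (m_{i,j}-1),
\]
where $m_{i,j}$ is the number of occurrences of $a_{i,j}^{\pm1}$. Since $F(B)$ is a fiber surface, it is the unique incompressible Seifert surface of $L$ and realizes $\chi(L)$.

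\textbf{Step 2: $F(B)$ is quasipositive.} Since $L$ is strongly quasipositive, it is the closure of some strongly quasipositive braid $A$. The surface $F(A)$ realizes $\chi(L)$ by Bennequin, so it is incompressible. By uniqueness of the incompressible surface of a fibered link (\Cref{fiber_surf}), $F(A)$ is isotopic to $F(B)$. Hence $F(B)$ is quasipositive.

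\textbf{Step 3: every sign is positive.} Suppose some edge $(i,j)$ has all its occurrences negative. Deplumb the Hopf bands of $F(B)$ iteratively. By \Cref{prop:Murasugi_sum_qp}, each Murasugi summand of a quasipositive surface is quasipositive. I want to isolate a summand containing a negative half-twisted band. If $m_{i,j}\ge 2$, one plumbed summand is a negative Hopf band, whose boundary $T_{2,-2}$ is not strongly quasipositive; this is a contradiction.

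The edge case $m_{i,j}=1$ is the main obstacle. There the single negative band is part of the disk structure, and its sign seems invisible to the fiber surface. I would handle it by noting that the band can be flipped by isotopy only if it is not linked with other plumbings. In that case, though, the edge is a ``free'' band of the disk, and reversing the sign of a band with $m_{i,j}=1$ does not change the link type: changing a single band in a tree does not change the surface up to isotopy of the underlying disk. So one may replace $a_{i,j}^{-1}$ by $a_{i,j}$ without changing the closure. In this way every singly occurring negative generator is flipped to positive, and the resulting braid is $T$-positive with the same closure, which proves the converse.
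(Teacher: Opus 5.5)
Your overall route is the paper's. You show $F(B)$ is fibered via a Murasugi-sum decomposition, use Bennequin and uniqueness of the fiber surface to get $F(B)\cong F(A)$ quasipositive, use Rudolph's Murasugi-sum theorem to exclude negative summands, and then flip any negative generator that occurs exactly once. Deplumbing single Hopf bands instead of the paper's summands $F_{2,t_k}$ is a cosmetic difference, since $F_{2,t}$ is itself a plumbing of $|t|-1$ Hopf bands of sign $t$. Your case $m_{i,j}\ge 2$ is exactly the paper's bound $t_k\ge -1$.

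Two minor points. First, the count should read $1-\chi(F(B))=\sum(m_{i,j}-1)$, not $-\chi(F(B))$. Second, ``adds a band parallel to an existing one'' does not by itself show that each attachment is a Murasugi sum: the plumbing arcs belonging to one edge necessarily cross in $D$, so order matters. Step~1 should therefore be cited to Rudolph's result that $T$-homogeneous braided surfaces are Hopf-plumbed baskets, much as the paper leaves its Claim~1 to the literature.

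The genuine gap is the case $m_{i,j}=1$. You correctly identify it as the crux but do not prove it. The condition ``not linked with other plumbings'' is never verified. Moreover, ``changing a single band in a tree does not change the surface up to isotopy of the underlying disk'' is not a valid mechanism, because an isotopy of $D$ need not carry the plumbed Hopf bands along. For example, flipping one band of $a_{1,2}^{-2}$ turns a negative Hopf band into an unknotted untwisted annulus.

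What is needed is the paper's Claim~2.
\begin{itemize}
\item Because $T$ is a tree and $(i,j)$ occurs only once, every other band joins two disks on the same side of the edge $(i,j)$. Hence removing $\beta$ disconnects $F(B)$ into $F_1\sqcup F_2$.
\item These two pieces can be separated by a sphere meeting $F(B)$ in a single arc across $\beta$. By planarity of the espalier, one side consists of consecutive disks $a,\dots,b$, and bands of the other side can only jump over this whole block. So the block with its bands can be shrunk into a small ball.
\item Rotating that side by $2\pi$ about an axis through $\beta$, keeping the other side fixed, is then an ambient isotopy. It turns the negative half-twist of $\beta$ into a positive one and produces $F(B')$ with $a_{i,j}^{-1}$ replaced by $a_{i,j}$.
\end{itemize}
Iterating over all such generators gives the $T$-positive word.
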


\Cref{prop:main_T-pos} directly implies \Cref{theorem:main_T-pos}, \ie that $\T$-positive links are precisely the strongly quasipositive links that are $\T$-homogeneous.

\begin{proof}[{Proof of \Cref{theorem:main_T-pos}}]
One direction easily follows from the definitions. For the other direction, suppose that $L$ is a strongly quasipositive and $T$-homogeneous link for some espalier $T$. Then, by \Cref{prop:main_T-pos}, $L$ is $T$-positive for the same espalier~$T$, so $\T$-positive by definition.
\end{proof}

\begin{proof}[{Proof of \Cref{prop:main_T-pos}}]
Again, one direction easily follows from the definitions. 

For the other direction, suppose that $L$ is a strongly quasipositive link which has a $T$-homogeneous braid representative $B$ for some espalier $T$. Suppose that $T$ has~$n$ vertices. Using the notation from \Cref{subsubsec:T-homo}, $B$ can be represented by a word in the $T$-generators contained in $G(T)=\{\sigij\}_{(i,j) \in E(T)}$ such that for all $(i,j) \in E(T)$, the generator~$\sigij$ appears at least once in this braid word and with either only positive or only negative powers. By slight abuse of notation, we denote this braid word by $B$ as well.

Consider~$F(B)$, the braided Seifert surface for $L$  associated to the braid word~$B$ (see \Cref{subsubsec:braided_surf}). Since $B$ is $T$-homogeneous, the surface~$F(B)$ is a fiber surface and hence realizes $\chi(L)$~\cite{Rudolph_Hopf_plumbing} (see also~\Cref{fiber_surf}). Indeed, we claim that $F(B)$ is isotopic to the surface obtained by an iterative Murasugi sum of Seifert surfaces for torus links $T_{2,t_k}$, which in particular implies its fiberedness by \Cref{prop:Murasugi_sum_fib}. More precisely, suppose that 
\begin{align}\label{eq:edges}
E(T)=\{(i_1,j_1), \dots, (i_{n-1},j_{n-1})\}
\end{align}
is the edge set of~$T$ where $1 \leq i_k < j_k \leq  n$ for all $k \in \{1, \dots, n-1\}$. Let $t_k$ denote the exponent sum of the $T$-generator $a_{i_k,j_k}$ in the braid word~$B$. Since~$B$ is $T$-homogeneous, the absolute value of $t_k$ equals precisely the number of half-twisted bands in $F(B)$ that connect the $i_k$-th and the $j_k$-th disk in $F(B)$. With this notation at hand, our more precise claim is as follows.\\

\emph{Claim 1:} The surface $F(B)$ is ambiently isotopic in $S^3$ to the surface obtained by an iterated Murasugi sum of Seifert surfaces $F_{2,t_k}$ for the torus links $T_{2,t_k}$, where~$k \in \{1, \dots, n-1\}$. 

\begin{proof}[Proof of Claim 1]
Readers familiar with fiber surfaces and Murasugi sums will find the statement clear; see, for example, \cite[Remark~0.9]{rudolph_new_knot_inv} and \cite{Rudolph_Hopf_plumbing}. Indeed, the argument is very similar to the one in~\cite[Theorem~2]{stallings_1978}, which is used to decompose Seifert surfaces for closures of homogeneous braids into iterative Murasugi sums. See \cite[Step 2 in Section 2 and the proof of Theorem~1.1]{feller_lewark_orbegozo} for more details and figures. The only difference in the $T$-homogeneous case is that the iteration should be started at one of the leaves of the tree $T$, which does not necessarily coincide with starting the iteration at the top or bottom disk of the Seifert surface~$F(B)$, as in the homogeneous case. \Cref{fig:seifsurf_T-homo2} sketches the decomposition of~$F(B)$ for the $T$-homogeneous braid $B = a_{1,3}^2 a_{2,3}^2 a_{4,5}^2 a_{1,4}^{-3} a_{4,5}^2 a_{2,3} a_{1,3} a_{4,5}$, illustrating the procedure. The details of a step-by-step proof in the general case are left to the reader.
\end{proof}

\begin{figure}[htbp] 
     \centering
     \def\svgwidth{1\columnwidth}
	 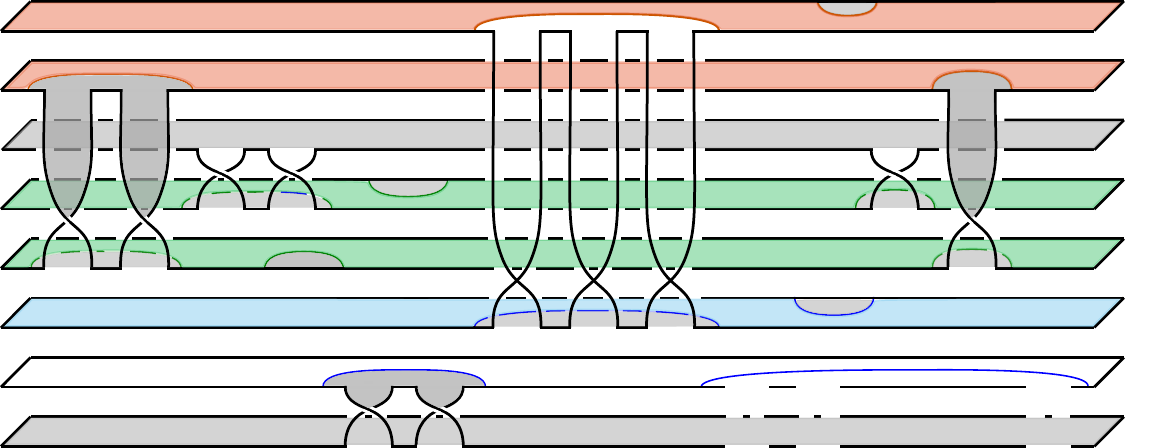 
	\caption{The fiber surfaces $F_1=F_{2,5}$, $F_2=F_{2,-3}$, $F_3=F_{2,3}$ and $F_4=F_{2,3}$ for the torus links $T_{2,5}$, $T_{2,-3}$, $T_{2,3}$ and $T_{2,3}$, respectively, together with summing regions $P_1$, $P_2$ and $P_3$ in blue, orange and green, respectively, such that Murasugi summing~$F_2$ to~$F_1$ using $P_1$, summing $F_3$ to the result using $P_2$, and summing~$F_4$ to that result via $P_3$ yields the braided Seifert surface~$F(B)$ from \Cref{fig:seifsurf_T-homo}.}
    \label{fig:seifsurf_T-homo2}
\end{figure}

Up to relabelling the edges in \eqref{eq:edges}, we can now 
assume that they are ordered such that 
\begin{align}\label{eq:Murasugi_sum_torus_links}
F(B) \cong (F_{2,t_1} \ast F_{2,t_2}) \ast \dots \ast F_{2,t_{n-1}}.
\end{align}

Next, since $L$ is strongly quasipositive, there is a strongly quasipositive braid~$A$ which determines a quasipositive Seifert surface~$F(A)$ for~$L$; see \Cref{sec:qp}. This surface $F(A)$ realizes $\chi(L)$~\cite{bennequin}. Combining the previous steps, there are two Seifert surfaces $F(A)$ and $F(B)$ for~$L$ which both realize~$\chi(L)$. However, up to isotopy the fiber surface $F(B)$ is the unique surface with maximal Euler characteristic bounding~$L$ (see~\cite[Proposition 2.19]{rudolph_handbook}), so~$F(A)$ and $F(B)$ must be ambiently isotopic. Thus~$F(B)$ is a quasipositive surface (by definition), which is obtained by an iterative Murasugi sum as in \eqref{eq:Murasugi_sum_torus_links}. By induction and \Cref{prop:Murasugi_sum_qp} due to~\cite{Rudolph_V}, all the summands $F_{2,t_k}$ must hence be quasipositive. This implies that $t_k \geq -1$ for all $k \in \{1,\dots,n-1\}$, since the torus link $T_{2,t_k} = \partial F_{2,t_k}$ is strongly quasipositive if and only if $t_k \geq -1$. The latter follows \eg from~\cite{Traczyk} using \Cref{theorem:pos_Seb}.

Let us now distinguish two cases. If $t_k \geq 0$ for all $k \in \{1,\dots,n-1\}$, then by definition of $t_k$, each $T$-generator~$a_{i_k,j_k}$ appears with only positive signs in $B$. Hence $B$ is a $T$-positive braid word and we are done. On the other hand, if $t_{\kappa} =-1$ for some $\kappa \in \{1,\dots,n-1\}$, then the $T$-generator $a_{i_\kappa,j_\kappa}$ has a single appearance as~$a_{i_\kappa,j_\kappa}^{-1}$ in the word $B$.\\

\emph{Claim 2:} The braid word $B^\prime$ obtained from $B$ by replacing the letter $a_{i_\kappa,j_\kappa}^{-1}$ by~$a_{i_\kappa,j_\kappa}$ represents a $T$-homogeneous braid with closure~$L$. In fact, the braided Seifert surfaces $F(B^\prime)$ and $F(B)$ are ambiently isotopic. 

\begin{proof}[Proof of Claim 2]
Since the $T$-generator $a_{i_\kappa,j_\kappa}$ appears in $B$ precisely once, as~$a_{i_\kappa,j_\kappa}^{-1}$, the surface~$F(B)$ becomes disconnected if we remove the corresponding negatively twisted band. Call the sub-surfaces of~$F(B)$ that correspond to these connected components $F_1$ and $F_2$, and the band $\beta$. We now modify~$F(B)$ by the following ambient isotopy: fix $F_1$, while rotating $F_2$ together with $\beta$ through a full rotation by angle $2 \pi$. The rotation axis passes through the band $\beta$ so that it is changed to a positively twisted band. See \Cref{fig:seifsurf_T-homo_2} for an example. 
The resulting surface is the one associated to $B^\prime$.
\end{proof}

\begin{figure}[htbp] 
    \centering
     \def\svgwidth{0.9\columnwidth}
	 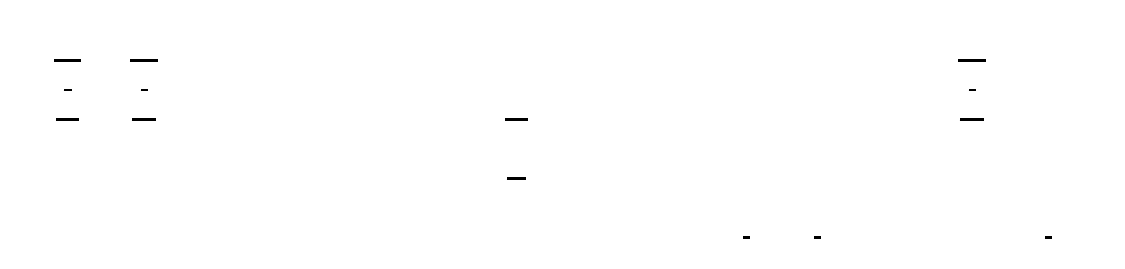 
	\caption{Isotopy used in the proof of Claim 2.}
    \label{fig:seifsurf_T-homo_2}
\end{figure}

By an induction on the number of indices $k \in \{1, \dots, n-1\}$ for which~$t_k = -1$ and iteratively applying Claim 2, we obtain a braid word where each $T$-generator $a_{i_k,j_k}$ appears with only positive exponents. This is a $T$-positive braid with closure~$L$, which finishes the proof.
\end{proof}

\begin{corollary}[{\Cref{prop:braid_positive}}]\label{cor:bp}
Non-split braid positive links are precisely the strongly quasipositive links that are closures of homogeneous braids. 
\end{corollary}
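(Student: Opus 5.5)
The plan is to specialize \Cref{prop:main_T-pos} to the linear espalier $T_n$ and to use \Cref{ex:bp_T_n}. By that example, the $T_n$-generators are exactly the Artin generators $\sigma_1,\dots,\sigma_{n-1}$, so the $T_n$-positive braids are the strictly positive braids. Likewise, the $T_n$-homogeneous braids are the braids in $B_n$ that are homogeneous in the sense of Stallings and in which every $\sigma_i$ occurs at least once.

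First, the forward direction. Let $L$ be a non-split braid positive link. As noted after \Cref{ex:bp_T_n}, $L$ is the closure of a strictly positive braid $B\in B_n$. The reason is that a missing generator $\sigma_i$ in a positive braid word would make the closure split, unless the missing generator sits at the extreme, in which case one can destabilize or reindex to remove trivial strands. Such a $B$ is homogeneous, so $L$ is the closure of a homogeneous braid. It is also strongly quasipositive, since positive braids are BKL-positive.

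Next, the converse. Let $L$ be strongly quasipositive and the closure of a homogeneous braid $B\in B_n$. If every $\sigma_i$ occurs in $B$, then $B$ is $T_n$-homogeneous. \Cref{prop:main_T-pos} then shows that $L$ is $T_n$-positive, that is, the closure of a strictly positive braid, and hence braid positive. It is also non-split, because the braided surface of a strictly positive braid is connected; alternatively, $T_n$-positive links are fibered by \Cref{theorem:Hopf_plumbed}, and fibered links are non-split.

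The main subtlety is the case where some $\sigma_i$ is absent from $B$, so that $B$ is not $T_n$-homogeneous. In that case $B$ splits as a product $B_1\sqcup B_2$ of homogeneous braids on disjoint blocks of consecutive strands, and $L$ is the split union of their closures. Here I need the statement to be read with ``non-split'' in force. If the corollary is meant to assume implicitly that $L$ is non-split, I reduce to the case where each generator appears: for a non-split $L$, a homogeneous representative with a missing $\sigma_i$ must have one block that is a trivial single strand with no crossings, and I delete that strand, then repeat. If instead the statement is meant as an equality of sets, I must check that strongly quasipositive closures of homogeneous braids are automatically non-split. This fails for the trivial braid in $B_2$, whose closure is the two-component unlink: it is strongly quasipositive but split. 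So I will interpret ``homogeneous'' as the paper does, meaning $T_n$-homogeneous, where each generator appears at least once. The proof then reduces directly to \Cref{prop:main_T-pos}, together with the non-splitness observation above.
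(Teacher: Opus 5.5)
Your final argument is correct and is the paper's proof. With the paper's (Stallings') convention that a homogeneous braid contains every $\sigma_i$, homogeneous braids in $B_n$ are exactly the $T_n$-homogeneous ones, so the converse is \Cref{prop:main_T-pos} applied to $T_n$ and the forward direction is \Cref{ex:bp_T_n}.

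Two side remarks are wrong, though nothing depends on them. First, a braid missing $\sigma_1$ or $\sigma_{n-1}$ also has split closure: the idle strand closes to a split unknot, and deleting it deletes a component of $L$. So the ``extreme'' exception and the strand-deletion reduction should both be replaced by the observation that a non-split link has no braid representative missing a generator. Second, connectedness of the braided surface alone does not force non-splitness (a planar annulus bounds the two-component unlink), so rely on your fiberedness argument instead.
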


\begin{proof}[{Proof of \Cref{cor:bp}}]
    Let $L$ be a non-split braid positive link. Then $L$ can be represented by a strictly positive braid $B$ on, say, $n$ strands. By \Cref{ex:bp_T_n}, this braid is $T_n$-positive for the linear graph~$T_n$ (see \Cref{fig:espalierT_n}), so also strongly quasipositive and $T_n$-homogeneous, \ie homogeneous. Conversely, a homogeneous braid on $n$ strands is a $T_n$-homogeneous braid, and \Cref{prop:main_T-pos} implies that the strongly quasipositive closure of a $T_n$-homogeneous braid is $T_n$-positive, so strictly braid positive.
\end{proof}

A link is called \emph{almost positive} if it admits a diagram with a single negative crossing. Feller--Lewark--Lobb \cite{FLL:almostpos} showed that almost positive links are strongly quasipositive. 
Comparing the analogies in \Cref{theorem:main_T-pos},  \Cref{cor:bp} (\Cref{prop:braid_positive}) and \Cref{theorem:pos_Seb}, the following question seems natural to ask. 

\begin{question}
Can we characterize almost positive links as strongly quasipositive links with an additional property of being \emph{almost homogeneous}? 
\end{question}

\section{Staircase braids and cabling}\label{sec:cabling}

In this section, we prove the following proposition.

\begin{proposition}[{\Cref{prop:cables}}]\label{prop:staircase_cabling}
Let $p \geq 2$ and let $K$ be a knot which is the closure of a staircase braid $B = \delta P\in B_n$ for some BKL-positive word $P$. If $q \geq n$, then the cable knot~$K_{p,q}$ is represented by a staircase braid.
\end{proposition}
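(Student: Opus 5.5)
The plan is to write down an explicit braid on $pn$ strands for the cable $K_{p,q}$, built from $B=\delta P$ by the standard satellite construction, and then rearrange it into a BKL-positive word containing the dual Garside element $\delta_{pn}=\sigma_1\cdots\sigma_{pn-1}$ of $B_{pn}$.

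\textbf{Step 1: the cabled braid.} Replace each strand of $B$ by $p$ parallel strands with blackboard framing. Each BKL-generator $a_{i,j}$ of $B_n$ becomes a ``fat'' band generator $\widehat{a}_{i,j}\in B_{pn}$, in which the $i$-th block of $p$ strands crosses the $j$-th block. The closure of the cabled braid $\widehat{B}=\widehat{\delta}\widehat{P}$ is the $(p,w)$-cable of $K$, up to the framing correction, where $w$ is the writhe (exponent sum) of $B$. To obtain $K_{p,q}$ I insert a twist $\tau^{r}$ on the first block of $p$ strands, with $\tau=\sigma_1\cdots\sigma_{p-1}$ and $r=q-pw$. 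Note that $r$ may be negative, since $w\ge n-1$ is typically large. The useful fact is that each fat generator $\widehat{a}_{i,j}$ is itself BKL-positive: it is a product of $p^2$ BKL-generators of $B_{pn}$, since all crossings between two parallel bundles are positive conjugates of Artin generators.

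\textbf{Step 2: absorbing the framing.} The obstacle is that $q-pw$ may be negative, so I should not cable in the naive way. A full twist $\Delta_p^2$ on one block can be slid around the closure, and a full twist on all $pn$ strands equals $\widehat{\Delta_n^2}\cdot(\Delta_p^2)^{\otimes n}$. Rather than correcting by negative twists, I will write the cable as the closure of
\[
\widehat{B}\cdot\tau_{(1)}^{\,q-pw},
\]
and then use the part $\widehat{\delta}$ to supply positivity. The fat $\widehat{\delta}$ on $n$ blocks equals $\delta_{pn}^{\,p}$ times a correction $(\tau^{-1})$ on blocks, up to conjugation. More precisely, $\delta_{pn}^{p}=\widehat{\delta_n}\cdot \prod_{\text{blocks}}\tau_{(k)}$ with a suitable cyclic shift. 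The blocks are permuted cyclically by $\widehat\delta$ and can be moved to the first block through the closure, since $K$ is a knot. This gives
\[
\widehat\delta\,\widehat P\,\tau_{(1)}^{\,q-pw}\sim \delta_{pn}^{\,p}\,\widehat P\,\tau_{(1)}^{\,q-pw-n}.
\]
Here I expect the exponent bookkeeping to be the delicate point: the correct count should be $q-n$ rather than $q-pw-n$. The writhe of $P$ contributes via the fat crossings, because each $\widehat{a}_{i,j}$ crossing in the cable carries its own framing. So I must track the blackboard framing of the cable pattern carefully, using that the satellite of a braid closure with blackboard framing has framing $w$. I will redo this via the standard formula: the $(p,q)$-cable of $\hat{B}$ is the closure of $\widehat{B}\,\tau_{(1)}^{\,q-pw}$, and then I convert $\widehat{a}_{i,j}$ into a BKL-positive word times block twists.

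\textbf{Step 3: positivity and conclusion.} After this rewriting, the braid is $\delta_{pn}^{\,p}$ (or at least one $\delta_{pn}$) times BKL-positive words times $\tau_{(1)}^{\,q-n}$. The hypothesis $q\ge n$ is used exactly to ensure that the leftover twist exponent is nonnegative. Then $\tau_{(1)}^{q-n}$ is a product of Artin generators, and the whole word is BKL-positive and contains $\delta_{pn}$, hence is a staircase braid.

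The main obstacle is Step 2: identifying precisely how much positive twisting the single $\delta$ (via $\delta_{pn}^p$ versus $\widehat\delta$) can absorb, so that the net exponent is $q-n$. I will verify this count on the trefoil ($n=2$) as a sanity check against \Cref{ex:cable_converse}.
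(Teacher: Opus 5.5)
There is a genuine gap, and it sits exactly where you flag it. First, the identity in Step 2, $\delta_{pn}^{p}=\widehat{\delta_n}\prod_{\text{blocks}}\tau_{(k)}$, is false even up to conjugation. The two sides have exponent sums $p(pn-1)$ and $p^2(n-1)+n(p-1)$, which differ by $(p-1)(n-p)$. What is true is $\delta_{pn}^{p}=\widehat{\delta_n}\cdot\tau_{(k)}^{p}$, a single full twist on one block; for example, $\delta_4^2=(\sigma_2\sigma_1\sigma_3\sigma_2)\,\sigma_3^2$.

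Second, and more importantly, nothing in your argument absorbs the framing correction $\tau^{q-pw}$. The claim in Step 3 that the leftover exponent is $q-n$ is asserted, not derived. The fact that each $\widehat{a}_{i,j}$ is a product of $p^2$ BKL-generators is not the useful one. What you need is that a fat band followed by one negative full twist on one of its blocks is still BKL-positive, namely equal to $\prod_{k=0}^{p-1}a_{pi-k,pj-k}$. For $p=2$ this reads $(\sigma_2\sigma_1\sigma_3\sigma_2)\sigma_1^{-2}=a_{2,4}a_{1,3}$. The writhe $w$ equals the number of BKL-letters of $B$, so this absorbs all of $\tau^{-pw}$. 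Hence the $(p,0)$-cable $B_{p,0}$ is BKL-positive, and $B_{p,q}$ is $B_{p,0}$ with $q$ positive $(1/p)$-twists inserted.

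Even with that lemma, your route through $\widehat{\delta}=\delta_{pn}^{p}\tau^{-p}$ leaves twist exponent $q-pn$, so it proves the statement only for $q\ge pn$. To reach the sharp bound $q\ge n$ you need the finer decomposition of the cabled $\delta_n$ that the paper uses. After pulling the top-most strand to the left so that the word begins with $\delta_{pn}$, the $(p,0)$-cable of $\delta_n$ becomes $\delta_{pn}$, times $(n-1)(p-1)$ positive BKL-generators, times exactly one negative $(1/p)$-twist on each of the $n$ blocks. For $n=p=2$ this is $a_{2,4}a_{1,3}=\delta_4\,a_{1,3}\,\sigma_1^{-1}\sigma_3^{-1}$. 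Because $K$ is a knot, these $n$ negative twists can be slid along the blocks and around the closure onto the $q$ positive $(1/p)$-twists. They cancel precisely when $q\ge n$, leaving a BKL-positive word containing $\delta_{pn}$. Only one copy of $\delta_{pn}$ is needed; insisting on the full $\delta_{pn}^{p}$ is what loses the factor $p$ in the bound.
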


\begin{remark}
    It is well known that, given a strongly quasipositive, fibered knot $K$, the cable knot~$K_{p,q}$ is strongly quasipositive whenever~$q \geq 1$~\cite{hedden:cabling}. However, to the best of our knowledge, explicit strongly quasipositive braid representatives for~$K_{p,q}$ have not yet been documented in the literature. The proof below provides an algorithm for obtaining these representatives.
\end{remark}

\begin{proof}[{Proof of \Cref{prop:staircase_cabling}}]
    Throughout this proof, let $p \geq 2$. Suppose that $B = \delta P \in B_n$ is a staircase braid representative for $K$ where $P$ is a BKL-positive word. Recall that $$\delta_n \defeq \delta=\sigma_1 \sigma_2 \cdots \sigma_{n-1} \in B_n$$ is the dual Garside element. For every $q \geq n$, we claim that there is a staircase braid~$B_{p,q}$ on $pn$ strands with closure $K_{p,q}$, \ie $B_{p,q}$ is a BKL-positive word containing $\delta_{pn} \defeq \sigma_1 \sigma_2 \cdots \sigma_{pn-1}$. 

    Given a diagram $D$ for $K$, the \emph{standard diagram} $D_{p,q}$ for $K_{p,q}$ is obtained by taking $p$~parallel (blackboard) copies of each strand of $D$ and adding $pw-q$ negative $(1/p)$-twists to the $p$~parallel strands, where~$w$ is the writhe of $D$. See \eg \cite[Figure~7]{kmms} for an example. When applied to a braid diagram $D$ on $n$ strands, this procedure gives rise to a braid diagram $D_{p,q}$ on $pn$ strands representing~$K_{p,q}$.  
    
    Starting from the braid diagram $D$ on $n$ strands associated to the staircase braid word~$B = \delta P$, we first claim that the standard braid diagram $D_{p,0}$ is BKL-positive. Indeed, after a braid isotopy, each BKL-generator in~$B$ translates to a product of~$p$ many BKL-generators in the $(p,0)$-cable~$D_{p,0}$. More precisely, the BKL-generator~$\sigij$ becomes 
    \begin{align}\label{eq:cabling}
        a_{pi,pj} a_{pi-1,pj-1}\cdots a_{p(i-1)+1,p(j-1)+1} = \prod_{k=0}^{p-1} a_{pi-k,pj-k}.
    \end{align}
    See \Cref{fig:cabling_BKL-gen} for an example in which the isotopy is depicted. We have also drawn fence diagrams as short-hand notation corresponding to this isotopy. Although it seems self-explanatory, let us note that each vertical line in a fence diagram stands for a positive BKL-generator. In \Cref{fig:cabling_BKL-gen_fence}, we have drawn the fence diagrams in the general case. In particular, if $B$ is a product of~$m$ positive BKL-generators, then~$B_{p,0}$ is a product of~$pm$ positive BKL-generators, where the product splits into~$m$ factors of the form given in~\eqref{eq:cabling}, one such factor for each BKL-generator in $B$. This shows that $K_{p,0}$ has a BKL-positive braid representative~$B_{p,0}$ in~$B_{pn}$. The braid diagram $B_{p,q}$ for $q > 0 $ is obtained from $B_{p,0}$ by inserting~$q$ positive $(1/p)$-twists on a set of~$p$ parallel strands in $B_{p,0}$, \ie by inserting $q \cdot (p-1)$ positive BKL-generators. So if $B$ is BKL-positive, then $B_{p,q}$ is BKL-positive, too.

    \begin{figure}[htbp] 
        \centering
        \def\svgwidth{1\columnwidth}
	       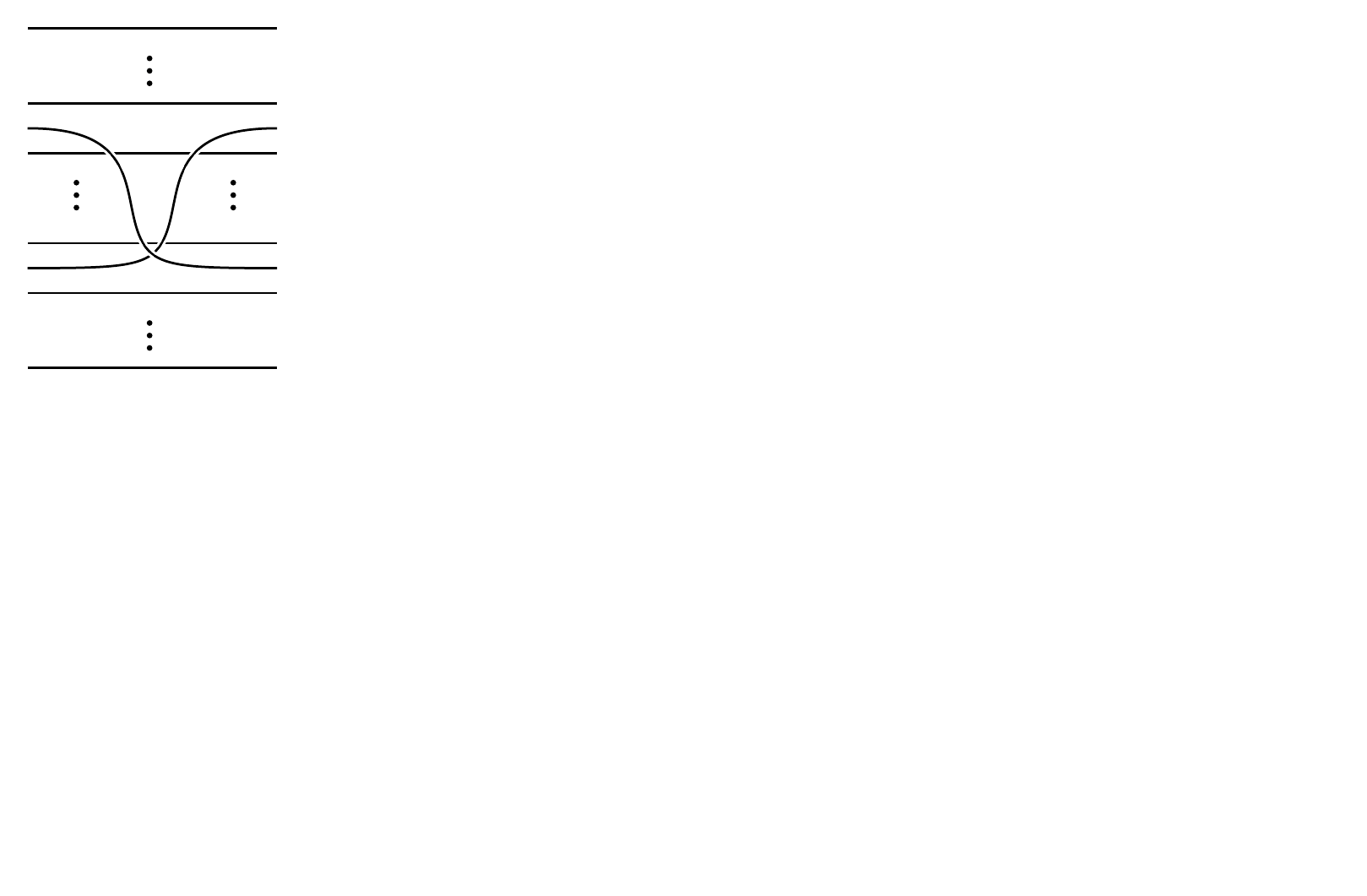 
        \caption{Top: The BKL-generator $\sigij$ and its $(p,0)$-cable for $p=3$. Bottom: Corresponding fence diagrams.}
        \label{fig:cabling_BKL-gen}
    \end{figure}

    \begin{figure}[htbp] 
        \centering
        \def\svgwidth{0.5\columnwidth}
	       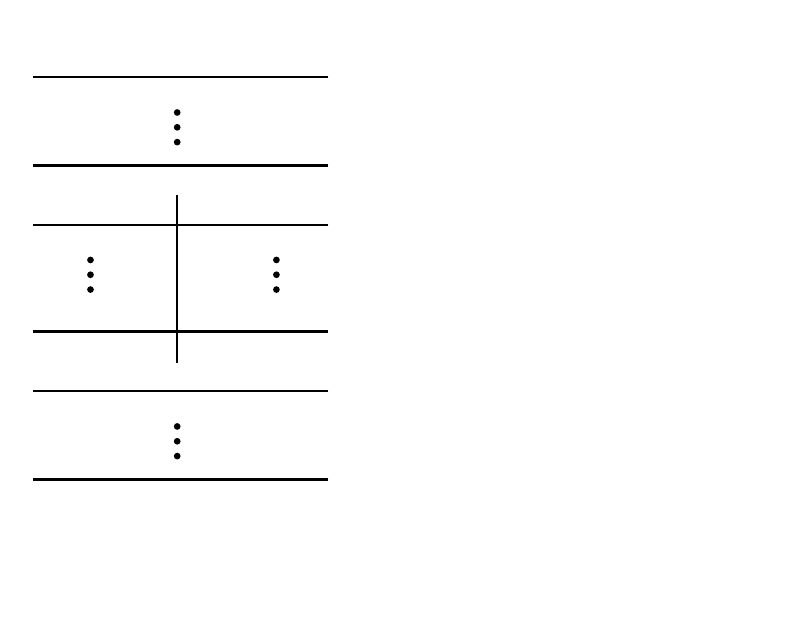 
        \caption{General fence diagram of the $(p,0)$-cable of a BKL-generator $\sigij$.}
        \label{fig:cabling_BKL-gen_fence}
    \end{figure}

    It remains to show that $B_{p,q}$ contains the dual Garside element $\delta_{pn} = \sigma_1 \sigma_2 \cdots \sigma_{pn-1}$ for every~$q \geq n$. To that end, note that, after an isotopy, the standard braid diagram $D_{p,0}$ contains the following sequence of BKL-generators, which corresponds to the ``cabled'' $\delta_n = \sigma_1 \sigma_2 \cdots \sigma_{n-1}$ in $B$:
    \begin{align}\label{eq:cabled_delta}
        \begin{split}
            \sigma_1 \sigma_2 \cdots \sigma_{pn-1} \cdot
            &\prod_{k=1}^{n-1} \left( a_{kp-1,(k+1)p-1} a_{kp-2,(k+1)p-2} \cdots a_{(k-1)p+1,kp+1} \right)\\
            \cdot &\prod_{k=1}^{n-1} \left(a_{kp-1}^{-1} \cdots a_{(k-1)p+1}^{-1} \right).
        \end{split}
    \end{align}
    Let us explain this in more detail. Again, the braid isotopy is best understood in an example; see \Cref{fig:cabling_delta} for the case $p=3$ and $n=4$.

    \begin{figure}[htbp] 
          \centering
     \def\svgwidth{1\columnwidth}
	 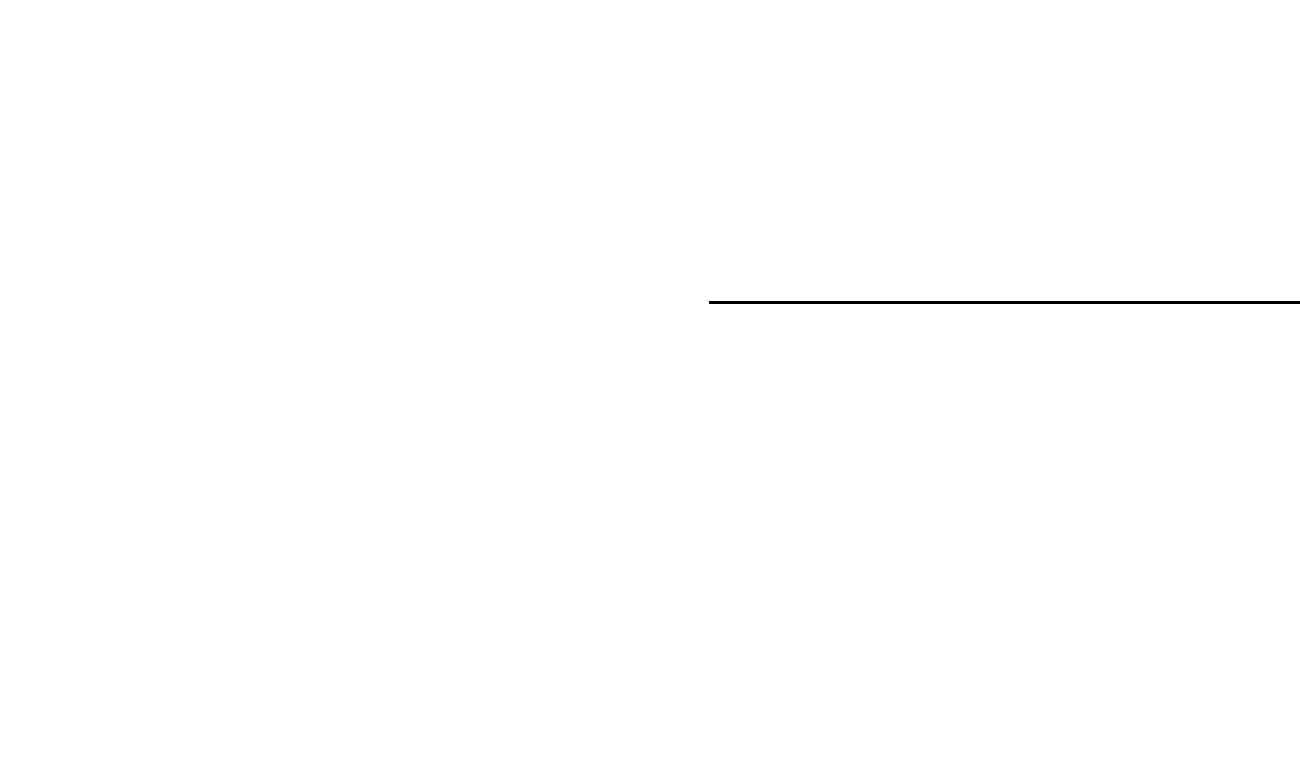 
        \caption{The $(p,0)$-cable of the dual Garside element $\delta_n = \sigma_1 \sigma_2 \cdots \sigma_{n-1} \in B_n$. Here~$n=4$ and~$p=3$. 
        }
        \label{fig:cabling_delta}
    \end{figure}
    
    For the general case, note that the first (\ie top-most) strand of $D_{p,0}$ can be ``pulled'' to the very left by a braid isotopy to make the braid word for $D_{p,0}$ start with $\delta_{pn}$; more precisely, by inserting $(p-1)$ positive and $(p-1)$ negative crossings in the $p$ parallel strands containing the top-most strand. As before, the other $(p-1)$ strands in the ``cabled'' first strand of $\delta$ each give rise to BKL-generators, and there will be in total $(n-1)\cdot (p-1)$ positive such (long) bands. Indeed, there are $(p-1)$ BKL-generators for each letter in $\delta_n$. However, note that by pulling the first strand to the very left and ``using it'' to become~$\delta_{pn}$, some of the negative $(1/p)$-twists that were inserted initially to obtain $D_{p,0}$ will not be paired with positive $(1/(p+1))$-twists as before to become positive BKL-generators. In fact, precisely~$n$ negative $(1/p)$-twists will remain, which correspond to the product $\prod_{k=1}^{n-1} \left(a_{kp-1}^{-1} \cdots a_{(k-1)p+1}^{-1} \right)$ in \eqref{eq:cabled_delta}. Since $D_{p,q}$ contains additional $q$ positive $(1/p)$-twists when built from $D_{p,0}$, these $n$ negative $(1/p)$-twists will cancel out with the $q$ positive ones whenever $q \geq n$. To summarize, the standard braid diagram $D_{p,q}$ for $K_{p,q}$ can be isotoped to a BKL-positive braid diagram on $pn$ strands which contains~$\delta_{pn}$ whenever $q \geq n$. This proves the claim.
\end{proof}



\begin{example}\label{ex:cable_converse}
The positive trefoil $T_{2,3}$ gives an example of a knot where the condition $q \geq n$ in \Cref{prop:cables} is in fact necessary. Indeed, $T_{2,3}$ is the closure of the positive braid~$\sigma_1^3$ on $n=2$ strands, which is a staircase braid. But despite being strongly quasipositive and fibered, the cable knot $(T_{2,3})_{2,1}$ is not the boundary of a positive Hopf-plumbed basket by~\cite{melvin_morton} (see also \Cref{T-pos_sqp}). Thus $(T_{2,3})_{2,1}$ is not the closure of a staircase braid by combining \Cref{theorem:Hopf_plumbed,thm:banfield}; see also~\cite[Corollary~5.3]{banfield}.
\end{example}


\section{\texorpdfstring{$\T$-positive}{T-positive} knots with small crossing number}\label{sec:crossing_nr}

This section is devoted to proving the following proposition and its consequences.

\begin{proposition}\label{prop:sqp_fib_12_cross_sec}
    All 42 strongly quasipositive, fibered knots with at most 12 crossings are $\T$-positive.
\end{proposition}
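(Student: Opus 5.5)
The proof will be a case-by-case verification that draws its list of knots from KnotInfo~\cite{knotinfo}. We first extract from KnotInfo every knot with at most 12 crossings (with the appropriate chirality) that is both strongly quasipositive and fibered. This should give exactly 42 knots. For each one we then have to produce a witness of $\T$-positivity. By the chain of equalities~\eqref{eq:summary}, any of the following witnesses will do:
\begin{enumerate}[label=(\roman*)]
\item a strictly positive braid word, which is $T_n$-positive by \Cref{ex:bp_T_n};
\item a staircase braid, using \Cref{thm:banfield};
\item a positive Hopf-plumbed basket bounded by the knot, using \Cref{theorem:Hopf_plumbed};
\item a strongly quasipositive knot with a $T$-homogeneous braid representative, using \Cref{theorem:main_T-pos}.
\end{enumerate}

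The first step clears away the easy majority. Every knot in the list that KnotInfo records as braid positive is $\T$-positive by the first inclusion in~\eqref{eq:inclusions}, since a knot is automatically non-split. This covers the torus knots, the algebraic knots, and most positive fibered knots. Next, several knots are connected sums or Murasugi sums of smaller $\T$-positive pieces. For example, $m(10_{145})$ and $m(12n_{642})$ are handled directly by the basket pictures in \Cref{fig:fiber_surf}, and Banfield already treats $m(10_{145})$ in \cite[Lemma~5.14]{banfield}.

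The second step handles the remaining knots, which are not braid positive. For each of them I would take the minimal-index BKL-positive braid word supplied by KnotInfo or by its strong quasipositivity data. I would then try to rewrite it, using the relations~\eqref{rel:1} and~\eqref{rel:2} together with conjugation, into a word that contains $\delta=\sigma_1\cdots\sigma_{n-1}$ as a subword, so that it is a staircase braid. If that fails, I would instead rewrite it into a word using only the generators of a single espalier, each appearing at least once. A useful heuristic is to conjugate so that the BKL-generators present cover the edges of a planar tree. If a word in the generators of some espalier $T$ can be found that is homogeneous but still has a few negative letters, \Cref{prop:main_T-pos} upgrades it automatically to a $T$-positive word, because the knot is known to be strongly quasipositive. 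As a fallback for stubborn cases, I would draw the fiber surface and exhibit it as a positive Hopf-plumbed basket, in the style of \Cref{fig:fiber_surf}.

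I expect the main obstacle to be this second step for the few non-braid-positive, non-positive knots, typically those with braid index $4$ or more. For these, the given BKL-positive word need not visibly contain $\delta$, and the search over conjugates and BKL-rewritings is not guaranteed to terminate by hand. My plan there is a computer search: enumerate the BKL-positive words of the minimal length $n-\chi(K)$, with $\chi(K)=1-2g(K)$, on $n$ strands, test each for a $\delta$-subword up to the BKL relations, and confirm by a braid-closure identification (for instance with SnapPy) that the closure is the required knot. Because such words have fixed length, the search space is finite. Finally, I would collect all 42 witnesses in a table.
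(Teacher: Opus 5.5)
Your proposal follows essentially the paper's proof. The paper also takes the 42 knots from KnotInfo and disposes of the 17 braid positive ones via the first inclusion in \eqref{eq:inclusions}. For the rest it exhibits either staircase representatives, obtained from KnotInfo's strongly quasipositive words by conjugation and the relations \eqref{rel:1}, \eqref{rel:2}, or positive Hopf-plumbed baskets found with SnapPy and Sage, all collected in \Cref{table:12cross}; the five braid index $3$ knots are in addition covered uniformly by Stoimenow's result (\Cref{prop:braid_index_3}).

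One remark needs correcting. Connected sums do not occur, since the knot tables list only prime knots. Moreover, a general Murasugi sum of $\T$-positive surfaces need not be $\T$-positive, even an iterated positive Hopf plumbing (cf.\ \cite{misev}). Such knots therefore still need an explicit basket or staircase witness, which you do in fact supply for $m(10_{145})$ and $m(12n_{642})$.
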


\begin{corollary}\label{cor:T-pos_12cross}
    The $\T$-positive knots with at most 12 crossings are precisely the strongly quasipositive, fibered knots with at most 12 crossings. 
\end{corollary}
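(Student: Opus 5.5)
The corollary follows from \Cref{prop:sqp_fib_12_cross_sec} together with the general facts recalled in \Cref{sec:prelims}. I would prove it by establishing the two inclusions between the set of $\T$-positive knots with at most 12 crossings and the set of strongly quasipositive, fibered knots with at most 12 crossings.

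\emph{$\T$-positive implies strongly quasipositive and fibered.} Let $K$ be a $\T$-positive knot with at most 12 crossings. By definition $K$ is the closure of a $T$-positive braid for some espalier $T$. Every $T$-generator is a BKL-generator, so this braid is BKL-positive and $K$ is strongly quasipositive. Fiberedness comes from \Cref{theorem:Hopf_plumbed}: $K$ bounds a positive Hopf-plumbed basket, which is an iterated Hopf plumbing and hence a fiber surface by \Cref{prop:Murasugi_sum_fib}. Alternatively, one can use Claim~1 in the proof of \Cref{prop:main_T-pos}, which exhibits the braided surface $F(B)$ as an iterated Murasugi sum of fiber surfaces of torus links $T_{2,t_k}$ with $t_k\geq 1$. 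This direction is valid for every crossing number.

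\emph{Strongly quasipositive and fibered implies $\T$-positive.} This is exactly \Cref{prop:sqp_fib_12_cross_sec}, which I take as given. It says that each of the 42 strongly quasipositive, fibered knots with at most 12 crossings is $\T$-positive.

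Combining the two inclusions gives equality of the two sets. The corollary carries no real difficulty of its own. All the substantive work is in \Cref{prop:sqp_fib_12_cross_sec}, which requires producing, knot by knot, a $T$-positive braid, a staircase braid (via \Cref{thm:banfield}), or a positive Hopf-plumbed basket for each of the 42 knots.
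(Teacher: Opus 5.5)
Your proposal is correct and follows the paper's proof: strong quasipositivity holds by definition, fiberedness follows from \Cref{theorem:Hopf_plumbed}, and the reverse inclusion is exactly \Cref{prop:sqp_fib_12_cross_sec}. Your alternative route to fiberedness via Claim~1 (using $t_k \geq 1$ for a $T$-positive word) is also valid.
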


\begin{proof}[{Proof of \Cref{cor:T-pos_12cross}}]
    Since $\T$-positive knots are strongly quasipositive and fibered (by definition and \Cref{theorem:Hopf_plumbed}, respectively), this directly follows from \Cref{prop:sqp_fib_12_cross_sec}. 
\end{proof}


\begin{corollary}
\label{prop:pos_fib_small_crossing_nr}
    All 33 positive, fibered knots with at most 12 crossings are $\T$-positive. 
\end{corollary}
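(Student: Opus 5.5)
This corollary should follow directly from \Cref{prop:sqp_fib_12_cross_sec}, together with the fact that positive links are strongly quasipositive \cite{Rudolph_positiveLinksSQP,nakamura}. Let $K$ be a positive, fibered knot with at most 12 crossings. Then $K$ is strongly quasipositive by Rudolph and Nakamura. It is fibered by assumption. Hence $K$ is one of the 42 strongly quasipositive, fibered knots with at most 12 crossings, and so $K$ is $\T$-positive by \Cref{prop:sqp_fib_12_cross_sec}.

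There is one subtlety: \Cref{prop:sqp_fib_12_cross_sec} counts knots up to the conventions of KnotInfo \cite{knotinfo}, that is, with a fixed chirality for strongly quasipositive representatives. Since positivity is not mirror invariant, I will check that for each positive knot $K$ the chirality that is positive is the same as the one KnotInfo lists as strongly quasipositive. This holds automatically, because positive implies strongly quasipositive for that very diagram's mirror type. So the inclusion
\[
\{\text{positive, fibered knots}\} \subseteq \{\text{strongly quasipositive, fibered knots}\}
\]
is taken on the nose, with no mirroring involved.

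The remaining step is the count of 33. I will obtain it by filtering the 42 knots from \Cref{prop:sqp_fib_12_cross_sec} for positivity using the KnotInfo positivity data. Nothing here is a real obstacle, since the count is only a tabulation. The only care needed is to use the positivity column for the correct mirror and not to include the knots $m(10_{145})$ and $m(12n_{642})$ from \Cref{ex:T-pos_and_pos}, which are strongly quasipositive and fibered but not positive.
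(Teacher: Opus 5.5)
Your proposal is correct and is essentially the paper's proof. Positive knots are strongly quasipositive in the same chirality (Rudolph, Nakamura), so every positive, fibered knot with at most 12 crossings is among the 42 knots of the preceding proposition and hence $\mathcal{T}$-positive; the count of 33 is simply read off from KnotInfo.
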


\begin{proof}[{Proof of \Cref{prop:pos_fib_small_crossing_nr}}]
    There are $33$ knots with at most $12$ crossings which are positive and fibered; see~\cite{knotinfo}. Since positive knots are strongly quasipositive \cite{Rudolph_positiveLinksSQP,nakamura}, the corollary is a direct consequence of \Cref{prop:sqp_fib_12_cross_sec}. 
\end{proof}

\begin{proof}[{Proof of \Cref{prop:sqp_fib_12_cross_sec}}]
There are $42$ knots with at most $12$ crossings that are both strongly quasipositive and fibered~\cite{knotinfo}. $17$ of them are braid positive and thus $\T$-positive by the inclusions~\eqref{eq:inclusions} from the introduction. Five of the remaining $25$ knots have braid index~$3$. By \Cref{prop:braid_index_3} in~\Cref{sec:braid_index} below, these five knots are also $\T$-positive. Indeed, the strongly quasipositive braid representatives for these five knots with braid index $3$ are easily seen to be staircase braid representatives in $B_3$. For the remaining~$20$ knots (all of braid index~$4$), we provide either staircase braid representatives or Seifert surfaces that are positive Hopf-plumbed baskets; see \Cref{table:12cross} in \Cref{sec:table}:
\begin{itemize}
\item We found the staircase braid representatives by manipulating the strongly quasipositive braid representatives from \cite{knotinfo} using conjugations and the braid relations~\eqref{rel:1} and~\eqref{rel:2}. To avoid confusion, note that the conventions for BKL-/band generators of~\cite{knotinfo} do not match our conventions in this paper. \Cref{table:12cross} uses our conventions from \Cref{subsubsec:braids_present}.
\item We found the positive Hopf-plumbed baskets with the help of verified computations within SnapPy~\cite{SnapPy} and Sage~\cite{sagemath}. \Cref{fig:12n_148,fig:12n_329,fig:12n_366,fig:12n_402,fig:12n_528}, which are referenced in \Cref{table:12cross}, are listed in \Cref{app:figures}.
\end{itemize} 
In conclusion, all $42$ strongly quasipositive, fibered knots with at most $12$ crossings are $\T$-positive by \Cref{theorem:Hopf_plumbed,thm:banfield}. 
\end{proof}

Since staircase braids are easier to verify, we have chosen to present them rather than positive Hopf-plumbed baskets or $\T$-positive braid representatives in most cases in the proof of \Cref{prop:sqp_fib_12_cross_sec}. However, it is fairly easy to convert, for example, a staircase braid representative into a description of a positive Hopf-plumbed basket, as demonstrated in \cite[Lemma~5.5]{banfield}.

\section{Further observations on \texorpdfstring{$\T$-positivity}{T-positivity}}\label{sec:further_results}

In this section, we explore the relationship between $\T$-positivity and the unknotting number, the braid index, connected sums, visual primeness and positive trefoil plumbings. 

\subsection{\texorpdfstring{$\T$-positivity}{T-positivity} and unknotting number}

The \emph{unknotting number} $u(K)$ of a knot $K$ is the minimal number of crossing changes needed to transform a diagram of $K$ into a diagram of the unknot. The equality $u(K) = g(K)$ is known to hold true for braid positive knots $K$ and more generally, for positive trefoil plumbings $K$; see~\cite{rudolph_braidedsurfaces,BD:trefoil_plumbing,kegel2024unknottingfiberedpositiveknots}. Moreover, Stoimenow~\cite[Conjecture~4.1]{stoimenow} conjectured that $u(K)=g(K)$ is true for positive, fibered knots $K$ (see also \cite[Conjecture~5]{murasugi-Przytycki}). The authors of \cite{kegel2024unknottingfiberedpositiveknots} presented infinitely many potential counterexamples $\mathcal{K}_n$, $n \geq 1$, to this conjecture. In light of the following example, it seems worth noting that all of them are $\T$-positive knots, as explained in \Cref{T-pos_braid_pos}. 

\begin{example}
The $\T$-positive knot $K = m(12n_{642})$ fulfills $u(K) \neq g(K)$, since it has $u(K) \geq 3$, but~$g(K) \leq 2$; see also \cite{knotinfo}. In fact, from the knot diagram of~$K$ drawn in \Cref{fig:fiber_surf} (right), it is easy to find a Seifert surface of genus~$2$ for~$K$ which is a positive Hopf-plumbed basket. So $K$ is $\T$-positive by \Cref{theorem:Hopf_plumbed} and~$g(K)=2$ (see \Cref{fiber_surf}). On the other hand, a Seifert matrix computation shows that the first homology of the double branched cover of~$S^3$ branched along~$K$ with coefficients in $\F_3$ has dimension $3$ as an $\F_3$-vectorspace, so $3 \leq u(K)$~\cite{Wendt1937}.
\end{example}

\subsection{\texorpdfstring{$\T$-positivity}{T-positivity} and braid index}\label{sec:braid_index}

\Cref{question:pos_fib_T-pos} asks whether there are positive, fibered knots which are not $\T$-positive. In \Cref{sec:crossing_nr}, we have seen that there are no such knots with low crossing numbers. Another frequently used measure of complexity of knots is the \emph{braid index}, that is, the minimal number of strands of a braid representing a given knot. We now briefly examine the relation between $\T$-positivity and the braid index.

\begin{proposition}[{\cite{stoimenow:props_3-braids}}]\label{prop:braid_index_3}
    If $K$ is a strongly quasipositive, fibered knot that arises as the closure of a braid on three strands, then $K$ has a staircase braid representative on three strands, so it is $\T$-positive. 
\end{proposition}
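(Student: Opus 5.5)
The plan is to combine Stoimenow's classification of strongly quasipositive 3-braids with a direct conjugation argument, and then conclude with \Cref{thm:banfield}. Since $K$ is a knot, it is non-split, and by the Birman--Menasco classification every 3-braid is conjugate to one of a few normal forms. In the BKL-generators of $B_3$, namely $a_{1,2}=\sigma_1$, $a_{2,3}=\sigma_2$ and $a_{1,3}$, the dual Garside element is $\delta=\sigma_1\sigma_2 = a_{2,3}a_{1,3}=a_{1,3}a_{1,2}$. Conjugation by $\delta$ cyclically permutes the three generators. The first step is to use the dual Garside normal form: every $3$-braid can be written as $\delta^{k}W$ with $W$ a BKL-positive word that does not contain $\delta$ as a subword (up to the relations).

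\textbf{Step 1.} Stoimenow's result~\cite{stoimenow:props_3-braids} gives that a strongly quasipositive $3$-braid knot has a BKL-positive $3$-braid representative $A$, with $\chi(K)=3-\ell(A)$ by Bennequin. Among all such representatives, I would pass to a conjugate whose dual Garside infimum $k$ is maximal. If $k\geq 1$, then $A$ is a staircase braid and we are done.

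\textbf{Step 2.} Suppose instead that $k=0$ for every conjugate. Then $A$ is a product of generators in which no two consecutive letters (cyclically) form a $\delta$, in any of its forms. In $B_3$ this forces each maximal block to be a power of a single generator, with consecutive blocks arranged so that no $\delta$ appears: $a_{1,2}$ may only be followed by $a_{1,2}$ or $a_{2,3}$, and so on. Up to the cyclic symmetry induced by $\delta$, the word is therefore of the form $\prod a_{x_i}^{e_i}$ with letters moving in one fixed cyclic direction. Concretely, these are words like $\sigma_1^{e_1}\sigma_2^{e_2}a_{1,3}^{e_3}\cdots$, where the transitions $\sigma_1\to\sigma_2\to a_{1,3}\to\sigma_1$ avoid $\delta$.

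The goal in this case is to show the closure is not fibered unless the word is conjugate to one containing $\delta$. I would use the braided Seifert surface $F(A)$, which is quasipositive and of maximal Euler characteristic. If $K$ is fibered, then $F(A)$ is its fiber surface. Such a word of length at least $2$ in two generators, e.g.\ $\sigma_1^{a}\sigma_2^{b}$ patterns, is homogeneous and hence fine. The problematic case is when all three generators appear with blocks. Here I would compute the monodromy or Alexander polynomial: for these alternating-direction words, the leading coefficient of the Alexander polynomial of the closure is not $\pm1$ (the Murasugi-sum structure breaks), which contradicts fiberedness. Alternatively, one can simply cite Stoimenow's explicit classification of fibered strongly quasipositive 3-braids, which lists them as conjugate to $\delta^{k}W$ with $k\geq1$, or to braid-positive words containing $\sigma_1\sigma_2$.

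\textbf{Main obstacle.} The hardest step is Step~2: excluding non-fibered words with infimum $0$ that use all three generators. Rather than doing a delicate Alexander polynomial computation, I would rely directly on~\cite{stoimenow:props_3-braids} for this classification. The residual check is then only that each listed normal form contains $\delta$. Finally, \Cref{thm:banfield} converts the staircase representative on three strands into $\T$-positivity.
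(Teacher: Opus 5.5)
Your fallback route is the paper's proof. The paper takes a strongly quasipositive $3$-braid representative from \cite[Theorem~1.1]{stoimenow:props_3-braids}. It then applies \cite[Theorem~3.3, $(2)\Rightarrow(5)$]{stoimenow:props_3-braids}, which gives a $3$-braid representative that either contains $\sigma_1^k\sigma_2^\ell\sigma_1^m$ or $\sigma_2^k\sigma_1^\ell\sigma_2^m$ ($k,\ell,m>0$) as a subword, or equals $\sigma_1^k\sigma_2^\ell$. Each of these visibly contains $\sigma_1\sigma_2=\delta$, and \Cref{thm:banfield} finishes. No maximization of the dual Garside infimum over conjugates is needed. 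You should cite this precise statement rather than paraphrase the classification as you do.

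The self-contained part of your Step~2, however, is wrong and should be dropped rather than offered as an alternative route.

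\textbf{The transitions are reversed.} You correctly record $\delta=a_{1,2}a_{2,3}=a_{1,3}a_{1,2}=a_{2,3}a_{1,3}$. But you then say that $a_{1,2}$ may be followed by $a_{2,3}$, and that the transitions $\sigma_1\to\sigma_2\to a_{1,3}\to\sigma_1$ avoid $\delta$. Each of these three pairs \emph{is} $\delta$. The $\delta$-free cyclic words run the other way: $a_{1,2}\to a_{1,3}\to a_{2,3}\to a_{1,2}$.

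\textbf{The two-generator case does not arise.} For any two distinct generators $x,y$, one of $xy$ or $yx$ equals $\delta$. So a cyclic word using exactly two generators (such as $\sigma_1^a\sigma_2^b$) always contains $\delta$ and never occurs in your case $k=0$. Your ``homogeneous and hence fine'' remark is therefore beside the point. In the case $k=0$ with a knot closure, all three generators must appear, in the order just described.

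\textbf{The key claim is unproved.} For those words, your claim that the Alexander polynomial is non-monic is only asserted. Yet it is the entire nontrivial content of the proposition in this case. The case is not vacuous: for example, $a_{1,2}a_{1,3}^2a_{2,3}$ closes to a genus-one knot with Alexander polynomial $2t^2-3t+2$.

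So your argument stands only through the citation to Stoimenow, exactly as in the paper.
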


\begin{proof}
    By Stoimenow \cite[Theorem 1.1]{stoimenow:props_3-braids}, there is a strongly quasipositive $3$-braid that represents~$K$. Since $K$ is fibered, from~\cite[Theorem~3.3, $(2) \Rightarrow (5)$]{stoimenow:props_3-braids} it follows that $K$ has a $3$-braid representative which contains $\sigma_1^k \sigma_2^\ell \sigma_1^m$ or $\sigma_2^k \sigma_1^\ell \sigma_2^m$ as subword for $k, \ell, m > 0$, or is $\sigma_1^k \sigma_2^\ell$. Thus $K$ has a staircase representative containing $\delta=\sigma_1 \sigma_2$ on three strands and is therefore $\T$-positive by \Cref{thm:banfield}.
\end{proof}

\begin{corollary}\label{cor:counterex_br_index}
    No knots with braid index $2$ or $3$ provide counterexamples to \Cref{question:pos_fib_T-pos}.
\end{corollary}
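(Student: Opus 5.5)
The plan is to combine \Cref{prop:braid_index_3} with the positive braid case, and to dispose of braid index one separately. Let $K$ be a positive, fibered knot with braid index at most $3$. We must show that $K$ is $\T$-positive. Since positive knots are strongly quasipositive \cite{Rudolph_positiveLinksSQP,nakamura}, $K$ is strongly quasipositive and fibered.

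The argument then splits by braid index.
\begin{itemize}
\item If the braid index is $3$, then $K$ is the closure of a $3$-braid. \Cref{prop:braid_index_3} then gives a staircase braid representative on three strands, and \Cref{thm:banfield} shows that $K$ is $\T$-positive.
\item If the braid index is $2$, then $K$ is the closure of $\sigma_1^k$ with $k$ odd. Strong quasipositivity (or positivity) forces $k\geq 1$; this follows from the slice--Bennequin equality together with $\chi(K)=2-|k|$, or simply from the classification of $2$-bridge torus knots, since $T_{2,k}$ with $k<-1$ is negative and not strongly quasipositive. Hence $K=T_{2,k}$ with $k\geq 1$. It is the closure of the strictly positive braid $\sigma_1^k$, so it is $T_2$-positive by \Cref{ex:bp_T_n}.
\item If one also wants to cover braid index $1$, the unknot is the closure of the trivial braid in $B_1$. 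This braid is $T_1$-positive vacuously, since $T_1$ has no edges.
\end{itemize}

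The only mild obstacle is the braid index $2$ case. There one has to justify the sign of $k$, which is easy because a positive diagram forces the signature (or $\tau$, or $\chi$) to be that of a positive torus knot. Alternatively, a $2$-braid knot is itself a $3$-braid knot after a stabilization, so \Cref{prop:braid_index_3} applies directly. This uniform route is cleaner, and it is the one I would write. No genuine difficulty is expected: the corollary is a direct consequence of \Cref{prop:braid_index_3}.
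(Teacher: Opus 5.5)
Your proposal is correct, and the uniform route you say you would write is exactly the paper's proof: positive implies strongly quasipositive, a $2$-braid closure is also a $3$-braid closure after stabilization, and then \Cref{prop:braid_index_3} applies. The separate braid index $2$ analysis is unnecessary, and note that $\chi$ alone cannot force $k\geq 1$ there because $\chi$ is unchanged under mirroring; the signature or $\tau$ argument you also mention does force it.
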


\begin{proof}
    If the closure $K$ of a braid on three strands is positive and fibered, it is strongly quasipositive~\cite{Rudolph_positiveLinksSQP,nakamura} and fibered. By \Cref{prop:braid_index_3}, $K$ is then also $\T$-positive.
\end{proof}

\Cref{prop:braid_index_3} implies that, if a knot $K$ is $\T$-positive and of braid index $3$, then it is the closure of a staircase braid on three strands. However, a $\T$-positive knot with braid index $3$ is not necessarily the closure of a $T$-positive braid on three strands for some espalier $T$, as the following example shows. 
    
\begin{example}
    The knot $K=10_{161}$ has braid index $3$ and $a_1^2 a_{1,3} a_2 a_1^2 a_2^2$ is a staircase braid representative on three strands. However, if $K$ had a $T$-positive braid representative for some espalier $T$ with three vertices, then up to conjugating this braid, it would be a positive $3$-braid representative for $K$. So $K$ would be braid positive, which it is not \cite{knotinfo}.
\end{example}

\subsection{\texorpdfstring{$\T$-positivity}{T-positivity} and connected sum}

We record the following relations of $\T$-positivity and $\T$-homogeneity to the operation of connected sum of knots.

\begin{proposition}\label{prop:conn_sum}
Let $T_1$ and $T_2$ be espaliers.
Let $K_1$ and $K_2$ be knots that are closures of a $T_1$-homogeneous braid and a $T_2$-homogeneous braid, respectively. Then the connected sum $K_1\# K_2$ is the closure of a $T_1*T_2$-homogeneous braid, where $T_1\ast T_2$ is the espalier given by the vertex-connected sum of $T_1$ and $T_2$ along the right-most vertex of $T_1$ and the left-most vertex of $T_2$.
\end{proposition}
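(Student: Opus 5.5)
Let $B_1 \in B_{n_1}$ be a $T_1$-homogeneous word and $B_2 \in B_{n_2}$ a $T_2$-homogeneous word with closures $K_1$ and $K_2$. The plan is to realize $K_1 \# K_2$ as the closure of the standard connected-sum braid on $n_1+n_2-1$ strands and to check that this braid is $T_1\ast T_2$-homogeneous.

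\textbf{Step 1: the glued espalier.} Identify vertex $n_1$ of $T_1$ with vertex $1$ of $T_2$, and shift every vertex of $T_2$ by $n_1-1$. The result $T_1\ast T_2$ has vertices $\{1,\dots,n_1+n_2-1\}$. It is a tree: we glue two trees at a single vertex, so no cycle is created. It is also an espalier. All edges of $T_1$ lie in the lower half-plane over $[1,n_1]$ and all shifted edges of $T_2$ lie over $[n_1,n_1+n_2-1]$. Two edges from different sides can meet only at the shared vertex $n_1$, so the interiors remain disjoint. The generator set is
\[
G(T_1\ast T_2)=G(T_1)\cup\{a_{i+n_1-1,\,j+n_1-1}\}_{(i,j)\in E(T_2)}.
\]

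\textbf{Step 2: the braid.} Include $B_1$ into $B_{n_1+n_2-1}$ on the first $n_1$ strands, via $a_{i,j}\mapsto a_{i,j}$. Include $B_2$ on the last $n_2$ strands, via $a_{i,j}\mapsto a_{i+n_1-1,j+n_1-1}$. Call the images $B_1'$ and $B_2'$ and consider the product $B_1'B_2'$. By Step 1 this word uses only $T_1\ast T_2$-generators. Each generator appears at least once, because it appeared at least once in $B_1$ or in $B_2$. Each appears with exponents of a single sign, because the two generator sets are disjoint. Hence $B_1'B_2'$ is $T_1\ast T_2$-homogeneous.

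\textbf{Step 3: the closure.} It remains to show that the closure of $B_1'B_2'$ is $K_1\#K_2$. This is the standard fact that closing $\beta_1\in B_{n_1}$ and $\beta_2\in B_{n_2}$ placed on $n_1+n_2-1$ strands, sharing strand $n_1$, gives $\widehat{\beta_1}\#\widehat{\beta_2}$.

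To see it, view the closed braid on the $n_1+n_2-1$ strands. Take a $2$-sphere separating the part where $B_1'$ acts from the part where $B_2'$ acts. This sphere meets the closure in exactly two points, both on the shared strand $n_1$: one in the segment between $B_1'$ and $B_2'$, and one on the closing arc. On one side of the sphere only the first $n_1$ strands braid, and closing them up gives $K_1$ with an arc cut open. On the other side only the last $n_2$ strands braid, giving $K_2$ with an arc cut open. Note that $K_1$ and $K_2$ are knots, so each summand is connected and the connected sum is well defined without choosing components.

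The main obstacle is making Step 3 rigorous. The BKL-generators $a_{i,j}$ cross intermediate strands behind them, so one must check that $B_1'$ genuinely involves only strands $1,\dots,n_1$ and $B_2'$ only strands $n_1,\dots,n_1+n_2-1$. This holds because $a_{i,j}$ with $i<j\le n_1$ is a band lying over strands $i$ through $j$, and similarly for the shifted generators on the other side. So the supports overlap only in the shared strand, and the separating sphere can be drawn as claimed.
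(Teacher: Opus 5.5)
Your proposal is correct and follows essentially the same route as the paper: include the two words via $a_{i,j}\mapsto a_{i,j}$ and $a_{i,j}\mapsto a_{i+n_1-1,j+n_1-1}$, multiply them, and note that the two generator sets are disjoint, together form $G(T_1\ast T_2)$, and keep their original signs. You give more detail than the paper on why $T_1\ast T_2$ is an espalier and on the decomposition sphere showing that the closure is $K_1\#K_2$. (For the espalier step, first redraw all edges as semicircles, so that the edges of $T_1$ really lie over $[1,n_1]$.) You also correctly obtain the ``each generator appears at least once'' condition directly from $B_1$ and $B_2$.
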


\begin{proof}
Since $K_1$ is the closure of a $T_1$-homogeneous braid, it is the closure of a braid that is represented by a BKL-homogeneous word, say $A$, that only consists of $T_1$-generators. Likewise, $K_2$ is the closure of a braid that is represented by a BKL-homogeneous word, say $B$, that only consist of $T_2$-generators.
Let~$n_1$ be the number of vertices of $T_1$ and let~$n_2$ be the number of vertices of $T_2$. Equivalently, $n_1$ and $n_2$ are the number of strands of $A$ and $B$, respectively.  We define $i_{n_1,n_2}:B_{n_1}\to B_{n_1+n_2-1}$ to be the inclusion homomorphism that sends a BKL-generator $a_{i,j}$, $1\leq i<j\leq n_1$ to $a_{i,j}$ (interpreted as a generator of~$B_{n_1+n_2-1}$). Similarly, we define $j_{n_1,n_2}:B_{n_2}\to B_{n_1+n_2-1}$ to be the injective homomorphism that sends~$a_{i,j}$ to $a_{i+n_1-1,j+n_1-1}$ for all $1\leq i<j\leq n_2$.
Then the closure of $i_{n_1,n_2}(A)j_{n_1,n_2}(B)$ is the connected sum $K_1\# K_2$ because $i_{n_1,n_2}(A)$ only contains generators of the form $a_{i,j}^{\varepsilon}$ with $1\leq i<j\leq n_1$, $\varepsilon\in\{\pm 1\}$, and $j_{n_1,n_2}(B)$ only contains generators of the form $a_{i,j}^{\varepsilon}$ with $n_1\leq i<j\leq n_1+n_2-1$, $\varepsilon\in\{\pm 1\}$.

Observe that $T_1*T_2$ is again an espalier. Let $\mathcal{G}_1 \defeq G(T_1)$ and $\mathcal{G}_2 \defeq G(T_2)$ denote the set of $T_1$-generators and $T_2$-generators, respectively. Then the set of $T_1*T_2$-generators is precisely $i_{n_1,n_2}(\mathcal{G}_1)\cup j_{n_1,n_2}(\mathcal{G}_2)$.

The word $i_{n_1,n_2}(A)j_{n_1,n_2}(B)$ only contains $T_1*T_2$-generators (or their inverses). The homomorphisms~$i_{n_1,n_2}$ and $j_{n_1,n_2}$ map BKL-generators with positive signs to BKL-generators with positive signs and BKL-generators with negative signs to BKL-generators with negative signs. Since $A$ and~$B$ are $T_1$-homogeneous and $T_2$-homogeneous, respectively, for every generator $a\in \mathcal{G}_1$ there is a fixed sign~$\varepsilon_a\in\{\pm 1\}$ so that all instances of $a$ in $A$ carry the sign $\varepsilon_a$. Likewise, for every generator $b\in \mathcal{G}_2$ there is a fixed sign $\varepsilon_b\in\{\pm 1\}$ so that all instances of $b$ in $B$ carry the sign $\varepsilon_b$. It follows that all instances of $i_{n_1,n_2}(a)$ in $i_{n_1,n_2}(A)j_{n_1,n_2}(B)$ carry the same sign $\varepsilon_a$. Likewise, every instance of $j_{n_1,n_2}(b)$ in~$i_{n_1,n_2}(A)j_{n_1,n_2}(B)$ carries the same sign $\varepsilon_b$. Since the set of $T_1*T_2$-generators is exactly the set of images of $T_1$-generators and $T_2$-generators (under $i_{n_1,n_2}$ and $j_{n_1,n_2}$, respectively), for every $T_1*T_2$-generator all of its instances in~$i_{n_1,n_2}(A)j_{n_1,n_2}(B)$ carry the same sign. Since $K_1\# K_2$ is not a split link, it follows that $i_{n_1,n_2}(A)j_{n_1,n_2}(B)$ is a $T_1*T_2$-homogeneous braid.
\end{proof}

\begin{corollary}
Let $T_1$ and $T_2$ be espaliers.
Let $K_1$ and $K_2$ be knots that are closures of a $T_1$-positive braid and a $T_2$-positive braid, respectively. Then the connected sum $K_1\# K_2$ is the closure of a $T_1\ast T_2$-positive braid.
\end{corollary}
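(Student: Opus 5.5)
This follows directly from \Cref{prop:conn_sum}, applied verbatim to the positive case. The only thing to check is that the sign bookkeeping in that proof carries positivity through.

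Start with $T_1$-positive and $T_2$-positive braid words $A \in B_{n_1}$ and $B \in B_{n_2}$ whose closures are $K_1$ and $K_2$. By definition, every $T_1$-generator occurs in $A$, and only with positive exponents; likewise every $T_2$-generator occurs in $B$, and only positively. In particular $A$ and $B$ are $T_1$- and $T_2$-homogeneous, so \Cref{prop:conn_sum} applies. With the homomorphisms $i_{n_1,n_2}$ and $j_{n_1,n_2}$ from its proof, the closure of the word
\[
W \defeq i_{n_1,n_2}(A)\, j_{n_1,n_2}(B) \in B_{n_1+n_2-1}
\]
is $K_1 \# K_2$.

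It remains to see that $W$ is $T_1\ast T_2$-positive. As noted in the proof of \Cref{prop:conn_sum}, the $T_1\ast T_2$-generators are exactly $i_{n_1,n_2}(G(T_1)) \cup j_{n_1,n_2}(G(T_2))$. Both maps send each positive BKL-generator to a positive BKL-generator. Hence every letter of $W$ is a positive $T_1\ast T_2$-generator.

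Each $T_1\ast T_2$-generator also occurs in $W$ at least once. A generator of the form $i_{n_1,n_2}(a)$ with $a \in G(T_1)$ appears because $a$ appears in $A$, and a generator $j_{n_1,n_2}(b)$ with $b \in G(T_2)$ appears because $b$ appears in $B$. So $W$ is $T_1\ast T_2$-positive, as claimed.

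The one point that needs attention is the identification of the $T_1\ast T_2$-generators and the fact that the two images do not interfere. The shared vertex $n_1$ belongs to both subtrees, but their edge sets are disjoint. Consequently no generator of $T_1\ast T_2$ is hit by both maps, and no generator is missed. This was already verified in the proof of \Cref{prop:conn_sum}, so the corollary follows at once.
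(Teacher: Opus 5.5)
Your proof is correct and matches the paper's: rerun the proof of \Cref{prop:conn_sum} with all signs in $A$ and $B$ positive, and use that $i_{n_1,n_2}$ and $j_{n_1,n_2}$ send positive BKL-generators to positive ones, so $i_{n_1,n_2}(A)\,j_{n_1,n_2}(B)$ is $T_1\ast T_2$-positive. Your direct check that every $T_1\ast T_2$-generator occurs, because every $T_1$- resp.\ $T_2$-generator occurs in $A$ resp.\ $B$, is a cleaner justification of the ``at least once'' requirement than the paper's appeal to $K_1\# K_2$ not being split.
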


\begin{proof}
If in the proof of the previous proposition all signs in $A$ and $B$ are positive, then so are all the signs in $i_{n_1,n_2}(A)j_{n_1,n_2}(B)$. In particular, it is a $T_1*T_2$-positive braid word.
\end{proof}

The proof of Proposition~\ref{prop:conn_sum} discusses the connected sum operation on knots as an operation on braid words. On the level of the corresponding braided surfaces this is a Murasugi sum, where the disk corresponding to the last strand of $B_1$ is identified with the disk corresponding to the first strand of $B_2$ via a certain homeomorphism. Using different homeomorphisms to glue these same two disks produces different Murasugi sums, which were called \textit{braided Stallings plumbings} by Rudolph~\cite{Rudolph_V}. This operation was initially used by Stallings to prove that closures of homogeneous braids are fibered~\cite{stallings_1978}. See also~\cite{bode_hsueh} for a discussion of braided Stallings plumbings in the context of braided open books. 

If $A$ and $B$ are two BKL-words on $n_1$ and $n_2$ strands, respectively, then a braided Stallings plumbing of $A$ and $B$ contains the same letters as $i_{n_1,n_2}(A)j_{n_1,n_2}(B)$, only possibly in a different order. See the proof of Proposition~\ref{prop:conn_sum} for the definition of $i_{n_1,n_2}$ and $j_{n_1,n_2}$. In particular, the same argument as in the proof of Proposition~\ref{prop:conn_sum} proves the result for any braided Stallings plumbing (not only the connected sum) of any two $\mathcal{T}$-positive/$\mathcal{T}$-homogeneous braids (or, to be more precise, for any braided Stallings plumbing of any two braided surfaces that are represented by $\mathcal{T}$-positive/$\mathcal{T}$-homogeneous braid words).

However, it is not true that every Murasugi sum preserves the property of being $\mathcal{T}$-positive, as can be seen from the fact that there exist infinitely many positive Hopf plumbings (Murasugi sums of the $\mathcal{T}$-positive Hopf link) that are not positive Hopf-plumbed baskets \cite{misev} and therefore not $\mathcal{T}$-positive (see also \Cref{ex:T-pos_and_pos}).

\subsection{\texorpdfstring{$\T$-positivity}{T-positivity} and visual primeness}

Having proved that the connected sum of $\mathcal{T}$-positive links is again $\mathcal{T}$-positive, we now turn to the question whether it is trivial to detect from a given $\mathcal{T}$-positive braid diagram if its closure is a connected sum.

Let $L$ be a link in $S^3$. We say that an embedded sphere $S\subset S^3$ is a \textit{decomposition sphere} for $L$ if it intersects $L$ in exactly two points and it divides $L$ into two non-trivial parts in the following sense. The sphere $S$ divides $S^3$ into two balls $B_1$ and $B_2$ glued along their common boundary $S$. We write~$\alpha= L\cap B_1$ and $\beta=L\cap B_2$, both of which are simple arcs with exactly two distinct endpoints on $S$. Connecting these endpoints with any simple embedded arc $\gamma$ on $S$ thus produces two links $L_1=\alpha\cup \gamma$ and $L_2=\beta\cup\gamma$. Since $S$ is simply connected, the isotopy types of $L_1$ and $L_2$ do not depend on the chosen~$\gamma$. In order for $S$ to be a decomposition sphere we require neither $L_1$ nor $L_2$ to be an unknot.

If $L$ admits a decomposition sphere, we say that it is \textit{composite}. Otherwise, it is \textit{prime}.

In general, it can be difficult to see from a given link diagram $D$ if it represents a prime link or a composite link. However, if it is possible to draw in the diagram plane an embedded circle $C$ that intersects $D$ in exactly two points and none of the two resulting pieces of $D$ (one on the inside of $C$ and one on the outside of $C$) turns into a diagram of the unknot by joining its endpoints via an arc of $C$, then $D$ represents a composite link. We can interpret $C$ as the intersection of a decomposition sphere with the diagram plane. We call $C$ a \emph{decomposition circle} for $D$.

Diagrams with decomposition circles can thus be thought of as those link diagrams for which it is obvious that they represent composite links.

\begin{definition}
A link diagram $D$ of a link $L$ is said to be \emph{visually prime} if it satisfies the following property. If $L$ is not prime, then there exists a decomposition circle for $D$, which gives rise to a decomposition sphere for $L$.
\end{definition}

The study of link diagrams from which (lack of) primeness is easily detectable in this sense goes back to seminal work by Menasco \cite{menasco} and Cromwell \cite{cromwell93}, who showed, respectively, that alternating diagrams and positive braid diagrams are visually prime. The latter result was generalized by Ozawa~\cite{ozawa} to include all positive link diagrams. More recently, Feller, Lewark and Orbegozo Rodriguez proved that all homogeneous braid diagrams are visually prime \cite{feller_lewark_orbegozo}.

Since $\mathcal{T}$-positive braids are a generalization of positive braids (and, likewise, $\mathcal{T}$-homogeneous braids of homogeneous braids), it is natural to ask if $\mathcal{T}$-positive braid diagrams are also visually prime. We prove that this is not the case by providing a counterexample.

The following is a quick test to see if a diagram permits a decomposition circle. We associate to a diagram $D$ a planar graph as follows. First, forget about the signs of the crossings, so that $D$ becomes a planar graph with as many $4$-valent vertices as crossings of $D$ and edges corresponding to the arcs of~$D$ between two crossings. Let $\Gamma$ be the dual graph of this graph, \ie the vertices of $\Gamma$ correspond to the regions of the diagram $D$ (\ie connected components of $\mathbb{R}\backslash D$), and there is an edge between two vertices of $\Gamma$ whenever the corresponding regions of $D$ share an edge in the graph obtained from~$D$. We can view~$\Gamma$ as a metric graph by assigning to every edge the length 1. If there exists a decomposition circle for $D$, then there exists a non-trivial loop of length 2 in $\Gamma$. 

\begin{proposition}
The braid diagram of the $\T$-positive braid $a_{1,2}a_{2,3}a_{1,2}a_{2,3}a_{2,4}a_{2,4}a_{2,4}$ is not visually prime. 
\end{proposition}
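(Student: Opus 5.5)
The plan has two parts. First, show that the closure $L$ of $B = a_{1,2}a_{2,3}a_{1,2}a_{2,3}a_{2,4}^3$ is composite, and in fact is $T_{2,3}\# T_{2,3}$. Second, show that the standard braid diagram $D$ of $B$ admits no decomposition circle. Note that $B$ is $T$-positive for the espalier $T$ with edges $(1,2),(2,3),(2,4)$, which embeds as required. Since $L$ is not prime but $D$ has no decomposition circle, $D$ is not visually prime.

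\emph{Step 1: $L$ is composite.} I would argue on the braided Seifert surface $F(B)$.
\begin{itemize}
\item The fourth disk is connected to the rest of $F(B)$ only through the three bands $a_{2,4}^3$.
\item These three bands occur consecutively in the word, so they attach to the second disk along an arc containing no other band attachments.
\item Hence $F(B)$ is a boundary connected sum of $F(a_{1,2}a_{2,3}a_{1,2}a_{2,3})$ (disks $1,2,3$) and $F(a_{2,4}^3)$ (disks $2,4$). This is exactly the situation of the connected sum in the proof of \Cref{prop:conn_sum}, after the isotopy that slides strand $4$ next to strand $2$.
\item Concretely, the boundary of a small regular neighbourhood of the fourth disk, its three bands and a thin sub-disk of the second disk is a sphere meeting $L$ in two points.
\end{itemize}
The two summands are the closures of $(\sigma_1\sigma_2)^2\in B_3$ and $\sigma_1^3\in B_2$, both positive trefoils. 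So neither side is an unknot, and $L\cong T_{2,3}\#T_{2,3}$ is composite.

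\emph{Step 2: no decomposition circle.} I would rewrite $D$ in Artin generators using $a_{2,4}=\sigma_2\sigma_3\sigma_2^{-1}$. This gives the $13$-crossing diagram
\[
\sigma_1\sigma_2\sigma_1\sigma_2(\sigma_2\sigma_3\sigma_2^{-1})^3 ,
\]
drawn without cancelling the adjacent letters $\sigma_2^{-1}\sigma_2$, since the statement concerns this specific diagram. Then I would build the dual graph $\Gamma$ described before the statement:
\begin{itemize}
\item Its vertices are the regions of the closed braid: the outer region, the region containing the braid axis, and, for each $i\in\{1,2,3\}$, the regions between strands $i$ and $i+1$, cut up by consecutive $\sigma_i$-crossings.
\item Its edges are the shared arcs of $D$.
\end{itemize}
I would list all pairs of distinct regions sharing two arcs; these are exactly the length-2 loops. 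The expected ones are bigons between consecutive $\sigma_2$ letters (for example $\sigma_2\sigma_2$ inside $\sigma_1\sigma_2\sigma_1\sigma_2\sigma_2\sigma_3$, and $\sigma_2^{-1}\sigma_2$ between consecutive copies of $a_{2,4}$), together with a few pairs involving the axis region and the outer region. For each such loop I would check that the circle it determines cuts off a piece which, after closing with an arc of the circle, is an unknot diagram. Typically that piece is a single crossing, a clasp $\sigma_2^{-1}\sigma_2$, or a diagram whose complementary side becomes trivial. Since a decomposition circle yields a length-2 loop in $\Gamma$, this rules out all decomposition circles.

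The main obstacle is Step 2: the case-by-case enumeration of the regions of $D$ and their double adjacencies. In particular, the regions between strands $2$ and $3$ are adjacent to many crossings ($\sigma_2$ and $\sigma_2^{-1}$ occur repeatedly), so I would first tabulate, for each region, its cyclic sequence of boundary arcs, and only then read off the multiple adjacencies.
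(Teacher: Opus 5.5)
\paragraph{Verdict.} Your overall route is the paper's. You identify the closure as $T_{2,3}\#T_{2,3}$, and you use the dual graph $\Gamma$ of the $13$-crossing diagram $\sigma_1\sigma_2\sigma_1\sigma_2(\sigma_2\sigma_3\sigma_2^{-1})^3$ to exclude decomposition circles; this diagram has $15$ regions, matching the paper's figure. Two steps, however, are wrong as written.

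\paragraph{Step 2: there are no loops of length two.} Your forecast misses the one fact the paper's proof rests on: $\Gamma$ has \emph{no} non-trivial loops of length two at all, so the case-by-case check you describe is vacuous.
\begin{itemize}
\item A bigon, such as the one between two consecutive $\sigma_2$'s, is a degree-two vertex of $\Gamma$. Its two edges go to two \emph{different} regions, one between positions $1,2$ and one between positions $3,4$. It is not a double edge.
\item A circle around a single crossing or around a clasp meets $D$ in four points, not two.
\item The outer region shares exactly one arc with each of the two regions between positions $1,2$. The axis region shares exactly one arc with each of the three regions between positions $3,4$, and the outer and axis regions are not adjacent.
\end{itemize}
The general reason is this. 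Every other edge of $\Gamma$ crosses an arc at position $k\in\{2,3\}$ and joins a region between positions $k-1,k$ to one between positions $k,k+1$. Two distinct such arcs bound the same pair of regions only if each of the two complementary stretches of the cyclic word between them avoids one of the letters $\sigma_{k-1}$ or $\sigma_k$. That forces the cyclic pattern $\sigma_{k-1}^a\sigma_k^b$. In the unsigned word $\sigma_1\sigma_2\sigma_1\sigma_2\sigma_2\sigma_3\sigma_2\sigma_2\sigma_3\sigma_2\sigma_2\sigma_3\sigma_2$, both pairs $(\sigma_1,\sigma_2)$ and $(\sigma_2,\sigma_3)$ interleave. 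Hence no two regions share two arcs, which is exactly what the paper reads off its picture of $\Gamma$.

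\paragraph{Step 1: the surface you take is not a sphere.} Your conclusion is correct, and the paper only asserts it, but the justification fails. Disk $4$, the three $a_{2,4}$-bands and a sub-disk of disk $2$ form a once-punctured torus: it has Euler characteristic $-1$ and boundary $T_{2,3}$. Its regular neighbourhood is therefore a genus-two handlebody, whose boundary is not a sphere.

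What you need is a \emph{ball} that contains this subsurface and meets $F(B)$ in a single arc of disk $2$. Such a ball exists because the three $a_{2,4}$-bands are consecutive and no other band attaches in their sector. So disk $4$ can be shrunk into that sector, and disk $3$, whose bands sit at letters $2$ and $4$, can be retracted out of it.

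A quicker route is a braid computation:
\[
\sigma_2^{-1}\bigl(\sigma_1\sigma_2\sigma_1\sigma_2\,\sigma_2\sigma_3^3\sigma_2^{-1}\bigr)\sigma_2=\sigma_1\sigma_2^3\sigma_3^3 .
\]
This uses $\sigma_1\sigma_2\sigma_1=\sigma_2\sigma_1\sigma_2$. The right-hand side Markov-destabilizes to $\sigma_1^3\sigma_2^3\in B_3$, whose closure is $T_{2,3}\#T_{2,3}$.
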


\begin{proof}
Although the closure of the braid is the connected sum of two positive trefoils, the closed braid diagram fails the above test. The corresponding graph $\Gamma$, shown in \Cref{fig:not_vis_prime}, does not have any non-trivial loops of length two. In the figure, the vertices of the graph $\Gamma$ on the right are labelled as the regions of the diagram~$D$ on the left. Since there are no non-trivial loops of length two in $\Gamma$, the diagram~$D$ does not have a decomposition circle and is therefore not visually prime.
\end{proof}

\begin{figure}[htbp]
\centering
\small
\labellist
\pinlabel 1 at -100 1300
\pinlabel 2 at 150 1300
\pinlabel 3 at 140 680
\pinlabel 4 at 430 1570
\pinlabel 5 at 420 1330
\pinlabel 6 at 420 1050
\pinlabel 7 at 400 820
\pinlabel 8 at 400 520
\pinlabel 9 at 670 1720
\tiny
\pinlabel 10 at 680 1450
\pinlabel 11 at 670 1220
\small
\pinlabel 12 at 770 1570
\pinlabel 13 at 820 1330
\pinlabel 14 at 770 820
\pinlabel 15 at 1200 1150
\endlabellist
\includegraphics[height=5.5cm]{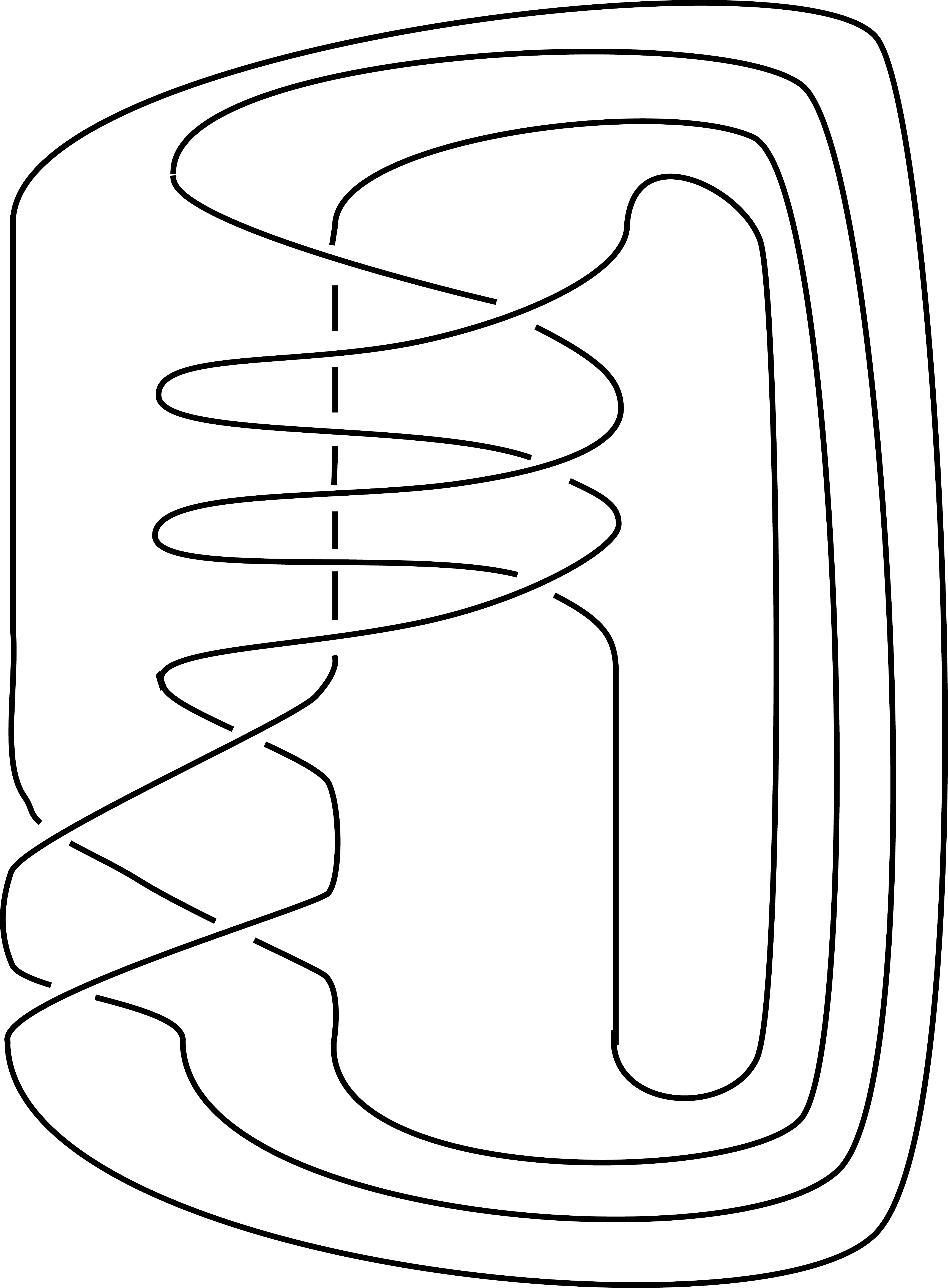}\hspace{0.5cm}
\labellist
\pinlabel 1 at 3800 1000
\pinlabel 2 at 7900 100
\pinlabel 3 at 1900 1000
\pinlabel 4 at 4600 1600
\pinlabel 5 at 6300 2400
\pinlabel 6 at 8500 4400
\pinlabel 7 at 50 130
\pinlabel 8 at 1700 2100
\pinlabel 9 at 2500 2300
\pinlabel 10 at 4700 2500
\pinlabel 11 at 7400 3800
\pinlabel 12 at 3700 2400
\pinlabel 13 at 5300 3800
\pinlabel 14 at 3600 5100
\pinlabel 15 at 3500 3300  
\endlabellist
\includegraphics[height=4.5cm]{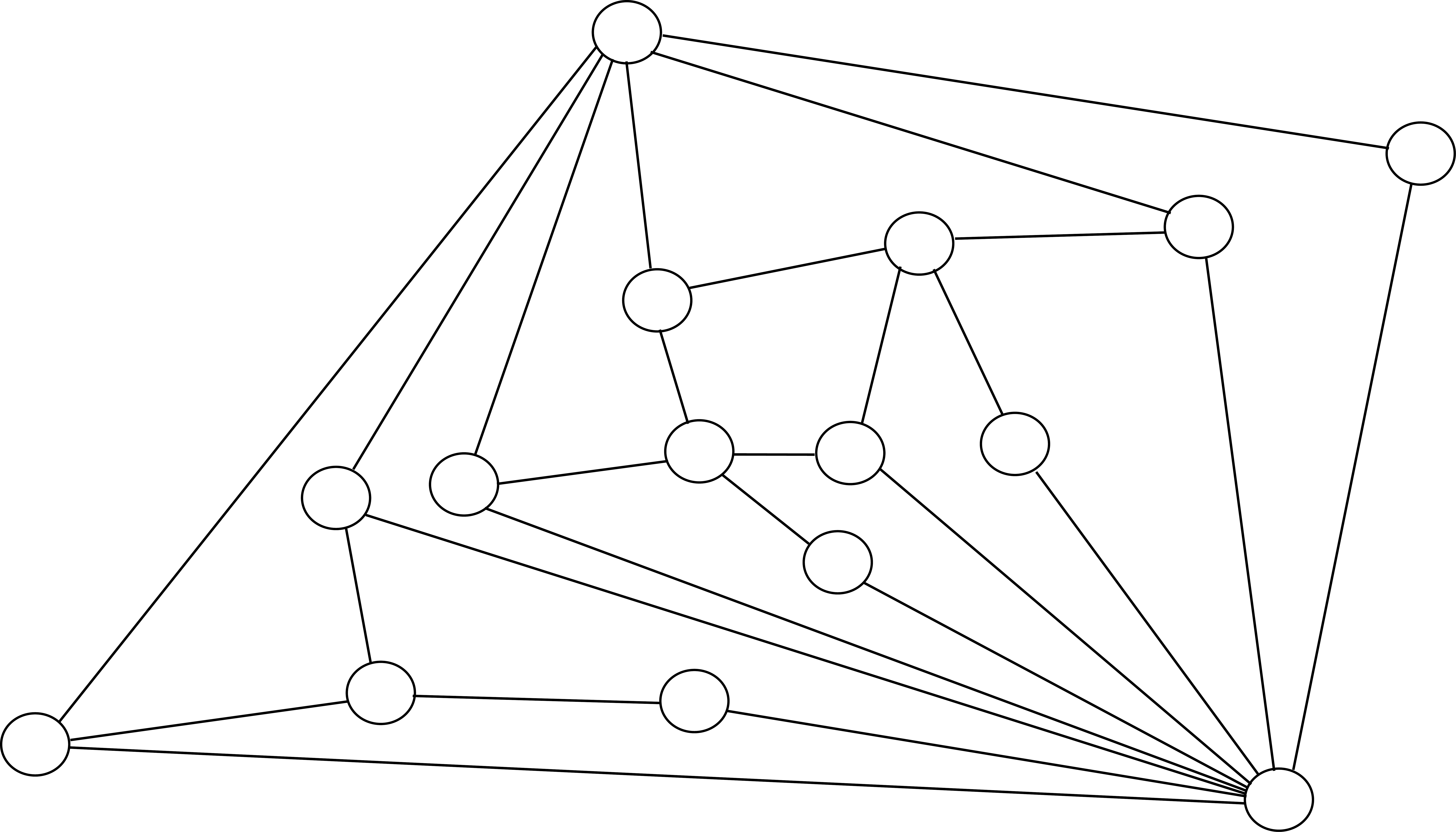}
\caption{Closed braid diagram for $T_{2,3}\# T_{2,3}$ and corresponding dual graph~$\Gamma$ with no non-trivial loops of length~$2$.}
\label{fig:not_vis_prime}
\end{figure}

\subsection{\texorpdfstring{$\T$-positivity}{T-positivity} and positive trefoil plumbings}\label{subsec:T-pos_tref}

Let us finally compare $\T$-positivity to yet another positivity notion: positive trefoil plumbings. A fibered knot is called a \emph{positive trefoil plumbing} \cite{BD:trefoil_plumbing} if its fiber surface arises from a disk by finitely many successive plumbings of the fiber surface of the positive trefoil, \ie the torus knot~$T_{2,3}$. According to \cite[Theorem 1.4]{kegel2024unknottingfiberedpositiveknots}, the positive, fibered knot~$K\defeq 11n_{183}$ is not a positive trefoil plumbing. However, its fiber surface is a positive Hopf-plumbed basket; see \cite[Figure 5]{kegel2024unknottingfiberedpositiveknots}. So, by \Cref{theorem:Hopf_plumbed}, $K$ is an example of a $\mathcal{T}$-positive knot which is not a positive trefoil plumbing.  

\begin{question}
Are there infinitely many knots which are $\T$-positive, but not positive trefoil plumbings?
\end{question}

On the other hand, the following proposition follows almost immediately from~\cite{kegel2024unknottingfiberedpositiveknots}.

\begin{proposition}
\label{prop:inf_tref_plumb_not_T-pos}
    There exist infinitely many knots that are positive trefoil plumbings, but not $\T$-positive.
\end{proposition}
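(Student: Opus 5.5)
The plan is to mirror the argument in \Cref{ex:T-pos_and_pos} for positive Hopf plumbings, replacing Misev's family by a family of positive trefoil plumbings. The decisive finiteness input is the same. A Hopf-plumbed basket of genus $g$ is built from a disk by plumbing $2g$ Hopf bands along arcs in that disk. Up to isotopy there are only finitely many such configurations of arcs and plumbing orders. Hence for every $g$ there are only finitely many positive Hopf-plumbed baskets of genus $g$, and by \Cref{theorem:Hopf_plumbed} only finitely many $\T$-positive knots of genus $g$. (Alternatively, one can use \Cref{thm:banfield} together with the fact from \cite[Proof of Lemma~5.7]{banfield} that such a knot has a staircase representative on at most $4g$ strands with a bounded number of BKL-letters.)

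The remaining work is to produce, for some fixed genus $g$, infinitely many pairwise distinct knots of genus $g$ that are positive trefoil plumbings. This is where I would invoke \cite{kegel2024unknottingfiberedpositiveknots}. I expect that paper to construct such families directly, for instance by plumbing trefoil fiber surfaces along arcs that twist arbitrarily often around the previously built surface, in the spirit of Misev's construction \cite{misev}. Plumbing along arcs of this kind keeps the genus fixed (each trefoil plumbing adds genus one), while the resulting knots are distinguished by some invariant, such as the Alexander polynomial or hyperbolic volume, that grows with the twisting parameter.

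Two points remain to check. First, the plumbed surfaces must be fiber surfaces of their boundary knots; this follows from \Cref{prop:Murasugi_sum_fib}. By uniqueness of fiber surfaces (\Cref{fiber_surf}), the genus of each such knot equals the number of trefoils plumbed, so the whole family lies in a single genus. Second, combining this with the finiteness of $\T$-positive knots of that genus, all but finitely many members of the family fail to be $\T$-positive, which gives the claim.

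The main obstacle is pinning down the infinite fixed-genus family and verifying that its members are pairwise non-isotopic. I would first look in \cite{kegel2024unknottingfiberedpositiveknots} for an explicit such statement. If none is given, I would build the family myself: plumb a trefoil onto the fiber surface of $T_{2,3}\#T_{2,3}$ (genus two) along an arc wrapped $k$ times around a band, and separate the resulting knots via their monodromies or Alexander polynomials.
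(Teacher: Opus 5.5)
Your proposal is correct and matches the paper's argument: the paper also combines the finiteness of positive Hopf-plumbed baskets of a fixed genus (hence, by \Cref{theorem:Hopf_plumbed}, finiteness of $\T$-positive knots of that genus) with an infinite fixed-genus family of positive trefoil plumbings taken from \cite[Proof of Proposition~4.1]{kegel2024unknottingfiberedpositiveknots}. That source provides exactly the family you were hoping for, namely infinitely many plumbings of two positive trefoil fiber surfaces, all of genus two and distinguished by their Alexander polynomials, so your genus-three fallback construction is not needed.
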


\begin{proof}
    There are infinitely many ways of plumbing two positive trefoil fiber surfaces. These are distinguished by the Alexander polynomials of their boundaries; see the proof of Proposition~4.1 in~\cite{kegel2024unknottingfiberedpositiveknots}. This gives rise to infinitely many knots of genus two that are positive trefoil plumbings. Since there are only finitely many positive Hopf-plumbed baskets of a given genus, this shows the claim.
\end{proof}

Note that we used a similar argument in \Cref{ex:T-pos_and_pos} to see that there are positive Hopf plumbings that are not $\T$-positive.
Let us also note that the knot $m(10_{145})$ is an example of a knot that is a positive trefoil plumbing, but not positive. \cite[Proof of Proposition~4.1]{kegel2024unknottingfiberedpositiveknots} provides instructions for obtaining this knot as a positive trefoil plumbing. However, we explained in \Cref{ex:T-pos_and_pos} why $m(10_{145})$ is not positive.

\newpage 
\appendix

\section{Table of \texorpdfstring{$\T$-positive}{T-positive} knots with at most 12 crossings}\label{sec:table}

\Cref{table:12cross} lists all $\T$-positive knots with at most 12 crossings, see also 
\Cref{prop:sqp_fib_12_cross_sec} and \Cref{cor:T-pos_12cross}. 

    \begin{center}
        \begin{table}[b]
            \begin{tabular}{|c c c|} 
             \hline
             Knot & Reason for $\T$-positivity & Details: staircase representative/reference \\ [0.5ex] 
             \hline \hline
             $3_1$ & braid positive & $a_1^3 \in B_2$  \\ 
             \hline
             $5_1$ & braid positive & $a_1^5 \in B_2$  \\ 
             \hline
             $7_1$ & braid positive & $a_1^7 \in B_2$  \\ 
             \hline
             $8_{19}$ & braid positive & $a_1^3 a_2 a_1^3 a_2 \in B_3$  \\ 
             \hline
             $9_1$ & braid positive & $a_1^9 \in B_2$  \\ 
             \hline
             $10_{124}$ & braid positive & $a_1^5 a_2 a_1^3 a_2 \in B_3$  \\ 
             \hline
             $10_{139}$ & braid positive & $a_1^4 a_2 a_1^3 a_2^2 \in B_3$  \\ 
             \hline
             $m(10_{145})$ & pos. Hopf-plumbed basket & \Cref{fig:fiber_surf} (left), $g=2$ \\ 
             \hline
             $10_{152}$ & braid positive &$ a_1^3 a_2^2 a_1^2 a_2^3 \in B_3$  \\ 
             \hline
             $10_{154}$ & staircase &  $a_2 a_{2,4} a_1^2 a_2 a_3 a_{2,4} a_1 a_2\in B_4 $ \\ 
             \hline
             $10_{161}$ & braid index $3$ &
             $a_1^2 a_{1,3} a_2 a_1^2 a_2^2 \in B_3$ \\ 
             \hline
             $11a_{367}$ & braid positive & $a_1^{11} \in B_2$ \\ 
             \hline
             $11n_{77}$ & braid positive & $ a_1^2 a_2^2 a_1 a_3 a_2^3 a_3^2 \in B_4$\\ 
             \hline
             $11n_{183}$ & pos. Hopf-plumbed basket  & \cite[Figure 5]{kegel2024unknottingfiberedpositiveknots} \\ 
             \hline
             $12n_{91}$ & staircase & 
             $a_3 a_1 a_{1,3} a_1 a_2 a_3^3 a_{1,3} a_2 a_3 \in B_4$
             \\ 
             \hline        
             $12n_{105}$ & staircase & 
             $a_3^3 a_2 a_{2,4} a_{1,3} a_1 a_2 a_3 a_1^2 \in B_4$
             \\ 
             \hline
             $12n_{136}$ & staircase & $a_1^2 a_2 a_3 a_2^2 a_{2,4} a_3 a_{1,3} a_2 a_3\in B_4$ \\        
             \hline
             $m(12n_{148})$ & pos. Hopf-plumbed basket  & \Cref{fig:12n_148}, $g=3$ \\
             \hline
             $12n_{187}$ & staircase & $a_3 a_2^3 a_{2,4} a_{1,3} a_1 a_2 a_3 a_1^2 \in B_4$
             \\ 
             \hline
             $12n_{242}$ & braid positive & $
             a_1 a_2^2 a_1^2 a_2^7 \in B_3$ 
             \\
             \hline
             $m(12n_{276})$ & staircase & 
             $a_{1,4} a_{2,3}^2 a_1 a_2 a_3 a_{2,4} a_{1,3}^2 \in B_4$ 
             \\
             \hline
             $12n_{328}$ & staircase & 
             $a_3 a_2 a_1 a_3 a_1 a_2^2 a_{2,4} a_1 a_2 a_3 \in B_4$
             \\ 
             \hline
             $m(12n_{329})$ & pos. Hopf-plumbed basket   & \Cref{fig:12n_329}, $g=3$ \\
             \hline
             $m(12n_{366})$ & pos. Hopf-plumbed basket  &  \Cref{fig:12n_366}, $g=3$ \\
             \hline
             $m(12n_{402})$ &  pos. Hopf-plumbed basket  & 
             
             \Cref{fig:12n_402}, $g=3$ \\
             \hline
             $12n_{417}$ & braid index $3$ & 
             $a_{1,3}^2 a_2 a_1^3 a_2^2 a_1^2 \in B_3 $
             \\ 
             \hline
             $12n_{426}$ & staircase & 
             $a_3 a_2 a_1 a_3 a_1 a_2 a_{2,4} a_1 a_2 a_3 a_2 \in B_4$
             \\ 
             \hline
             $m(12n_{472})$ & braid positive & $
             a_1 a_2^4 a_1^2 a_2^5
             \in B_3$ \\ 
             \hline
             $12n_{518}$ & staircase & $a_{2,4}^2 a_1^2 a_2 a_3 a_2 a_3^2 a_{1,3} a_2\in B_4$ \\ 
             \hline
             $m(12n_{528})$ & pos. Hopf-plumbed basket  
              &  \Cref{fig:12n_528}, $g=3$ \\
             \hline
             $12n_{574}$ & braid positive & $a_1 a_2^6 a_1^2 a_2^3\in B_3$
             \\
             \hline
             $12n_{591}$ & staircase & 
             
             $a_3 a_2 a_1 a_{1,3} a_1 a_2 a_3^2 a_2 a_1 a_{2,4}$
             \\ 
             \hline
             $12n_{640}$ & braid index $3$ & 
             $a_{1,3}^2 a_2 a_1^4 a_2^2 a_1 \in B_3$
             \\ 
             \hline
             $m(12n_{642})$ & pos. Hopf-plumbed basket  & \Cref{fig:fiber_surf} (right), $g=2$\\
             \hline
             $12n_{647}$ & braid index $3$ & 
             $a_{1,3}^2 a_2 a_1^2a_2^2a_1^3 \in B_3$
             \\ 
             \hline
             $m(12n_{660})$ & staircase & $a_2 a_{2,4}a_1^2 a_2 a_3 a_{2,4} a_1 a_{1,3} \in B_4$\\
             \hline 
             $12n_{679}$ & braid positive & $a_1^3 a_2^2 a_1^2 a_2^5\in B_3$ 
             \\
             \hline
             $12n_{688}$ & braid positive & $a_1^3 a_2^4a_1^2 a_2^3\in B_3$
             \\
             \hline
             $12n_{694}$ & staircase & $a_1 a_2 a_3^2 a_2 a_{2,4} a_3 a_{1,3} a_2 a_1 a_2 \in B_4$ \\ 
             \hline
             $12n_{725}$ & braid positive & $a_1 a_2^2a_1^4 a_2^5\in B_3$
             \\
             \hline
             $12n_{850}$ & braid index $3$ & 
             $a_{1,3}^4 a_2 a_1^2 a_2^2 a_1 \in B_3$
             \\ 
             \hline
             $12n_{888}$ & braid positive & $a_1^3 a_2^3a_1^3 a_2^3 \in B_3$
             \\
             \hline    
            \end{tabular}
            \caption{$\T$-positive knots with at most 12 crossings.}
        \label{table:12cross}
        \end{table}
    \end{center}

\section{Figures verifying \texorpdfstring{\Cref{prop:sqp_fib_12_cross_sec}}{Proposition 6.1}}\label{app:figures}

\Cref{fig:12n_148,fig:12n_329,fig:12n_366,fig:12n_402,fig:12n_528}, created using SnapPy~\cite{SnapPy}, are used used in the proof of \Cref{prop:sqp_fib_12_cross_sec}. 

\begin{figure}[htbp] 
	 \centering
     \includegraphics[width=0.58\textwidth]{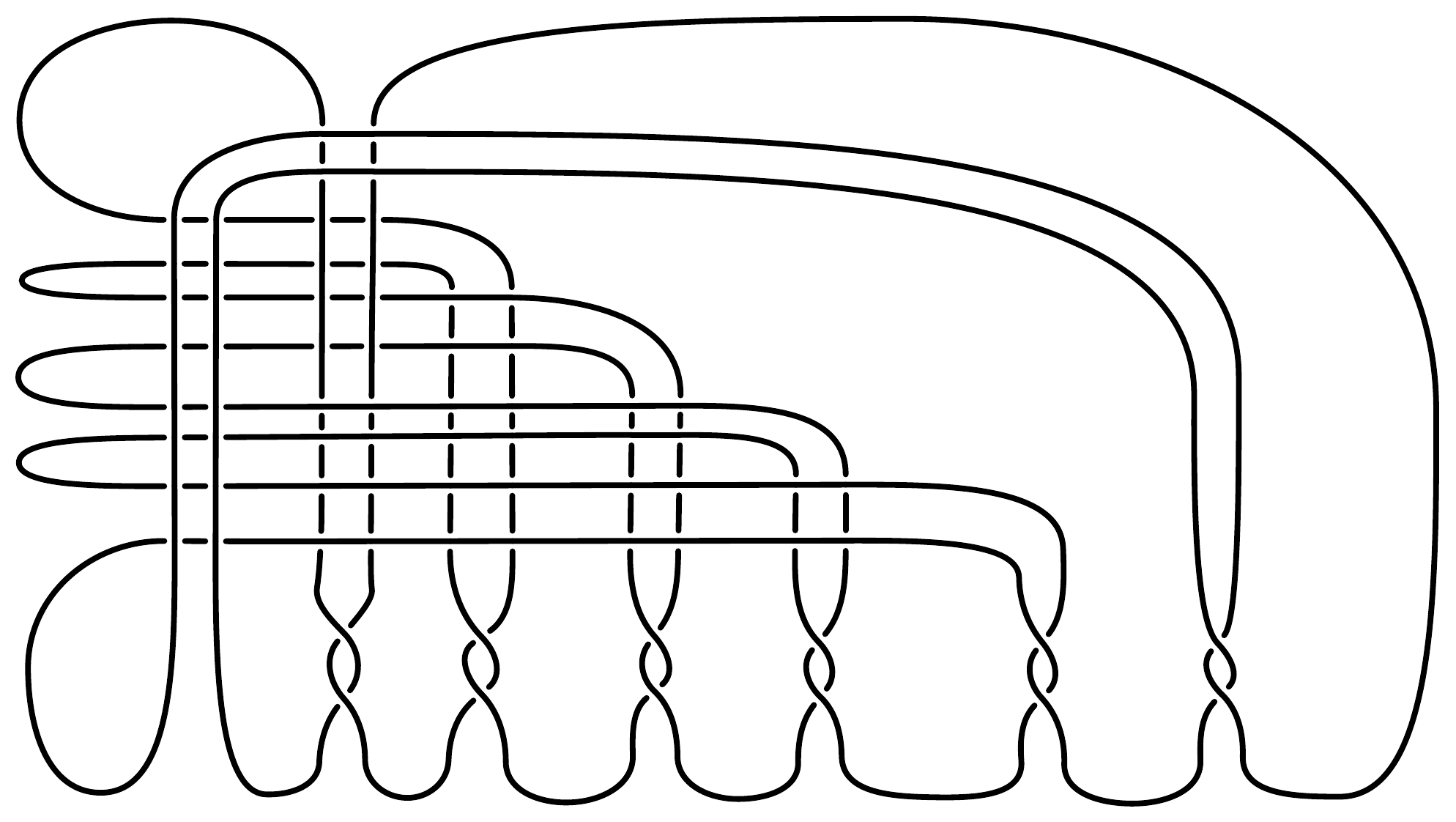}
    \caption{The knot $m(12n_{148})$ admits a genus three Seifert surface that is a positive Hopf-plumbed basket. The six Hopf bands plumbed to a disk are easily recognisable from the knot diagram (see also \Cref{fig:fiber_surf}).
    }
    \label{fig:12n_148}
\end{figure}

\begin{figure}[htbp] 
	 \centering
     \includegraphics[width=0.58\textwidth]{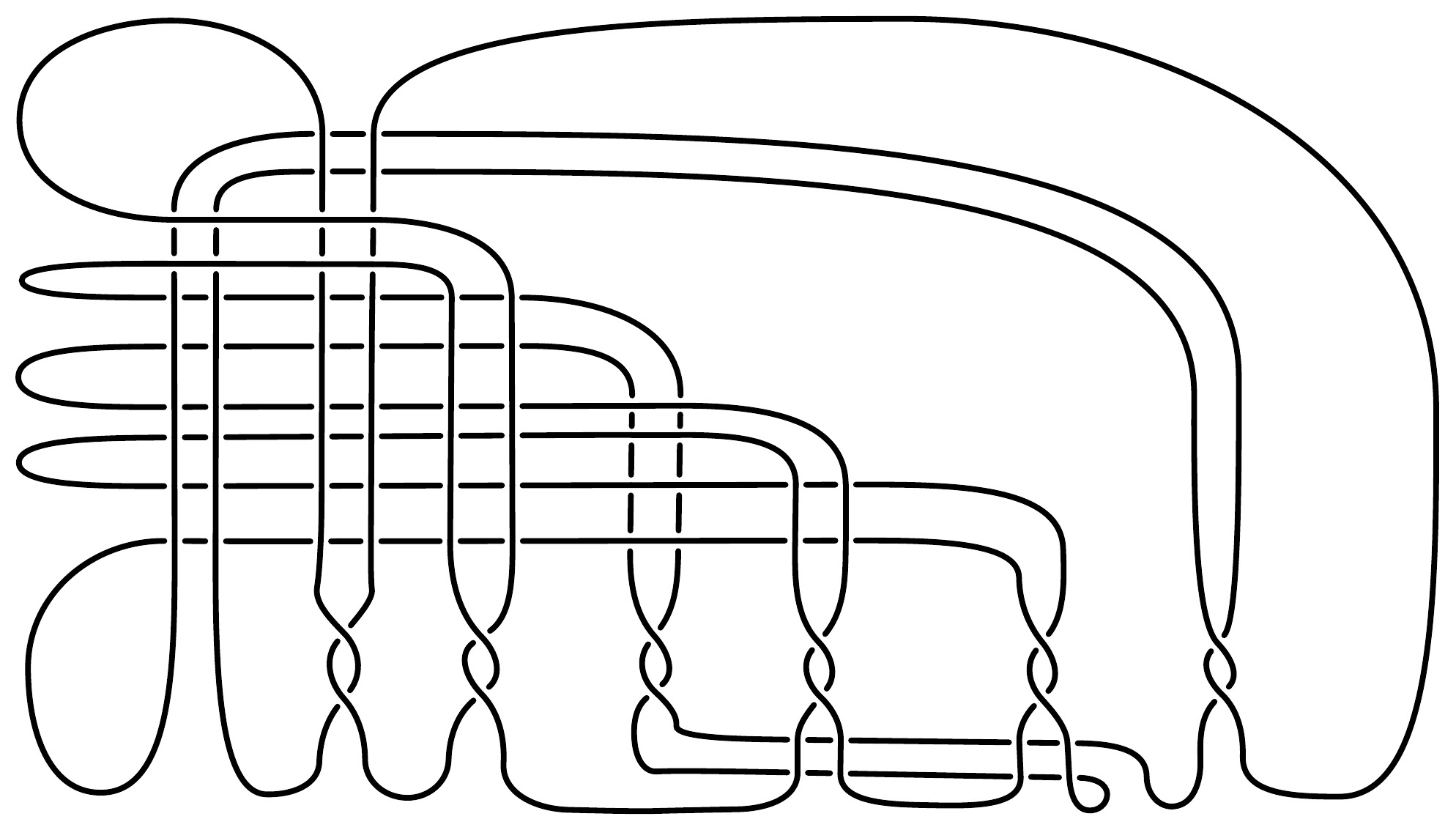}
    \caption{The knot $m(12n_{329})$ admits a genus three positive Hopf-plumbed basket.
    }
    \label{fig:12n_329}
\end{figure}

\begin{figure}[htbp] 
	 \centering
     \includegraphics[width=0.58\textwidth]{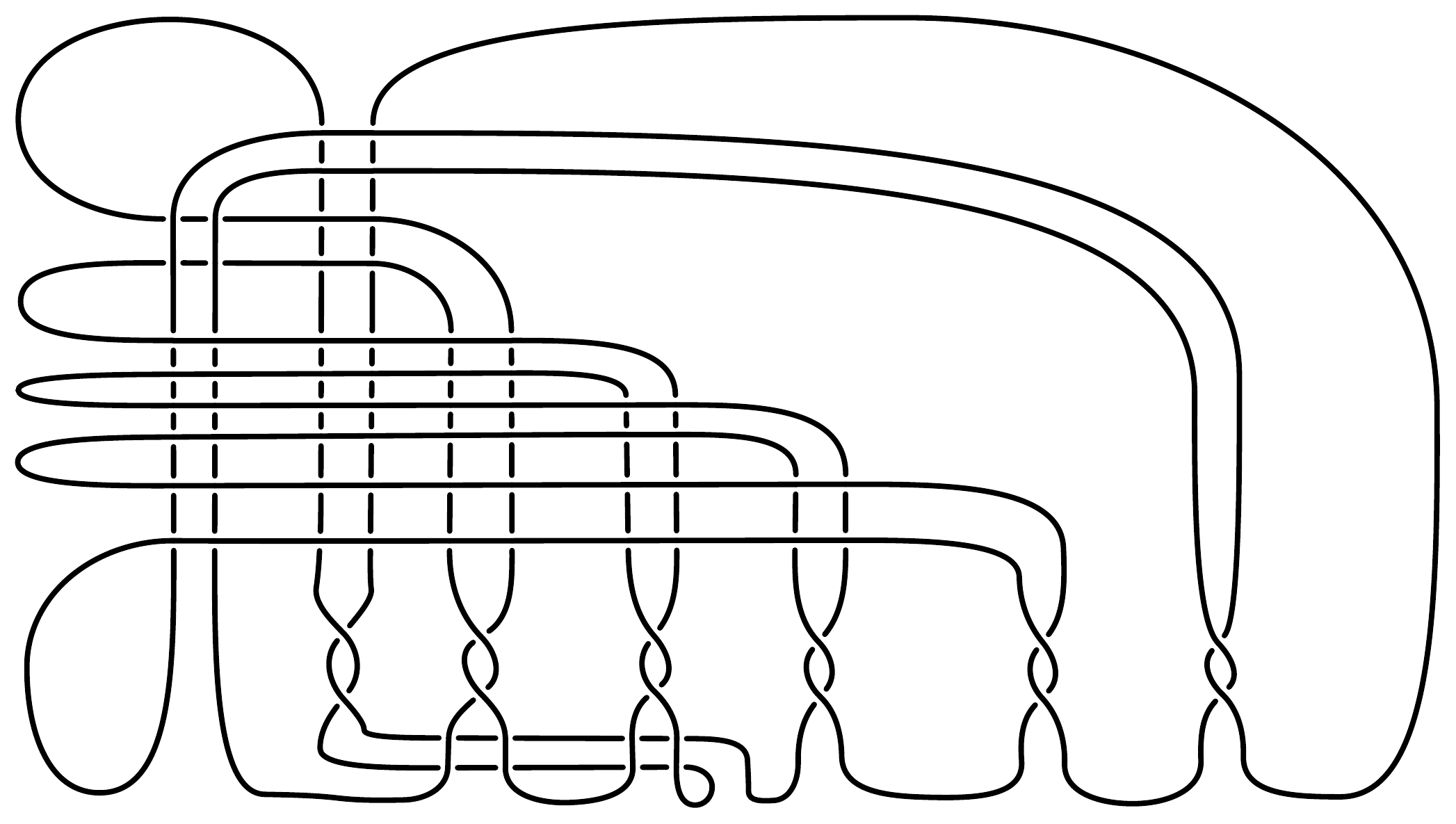}
    \caption{The knot $m(12n_{366})$ admits a genus three positive Hopf-plumbed basket.
    }
    \label{fig:12n_366}
\end{figure}

\begin{figure}[htbp] 
	 \centering
     \includegraphics[width=0.58\textwidth]{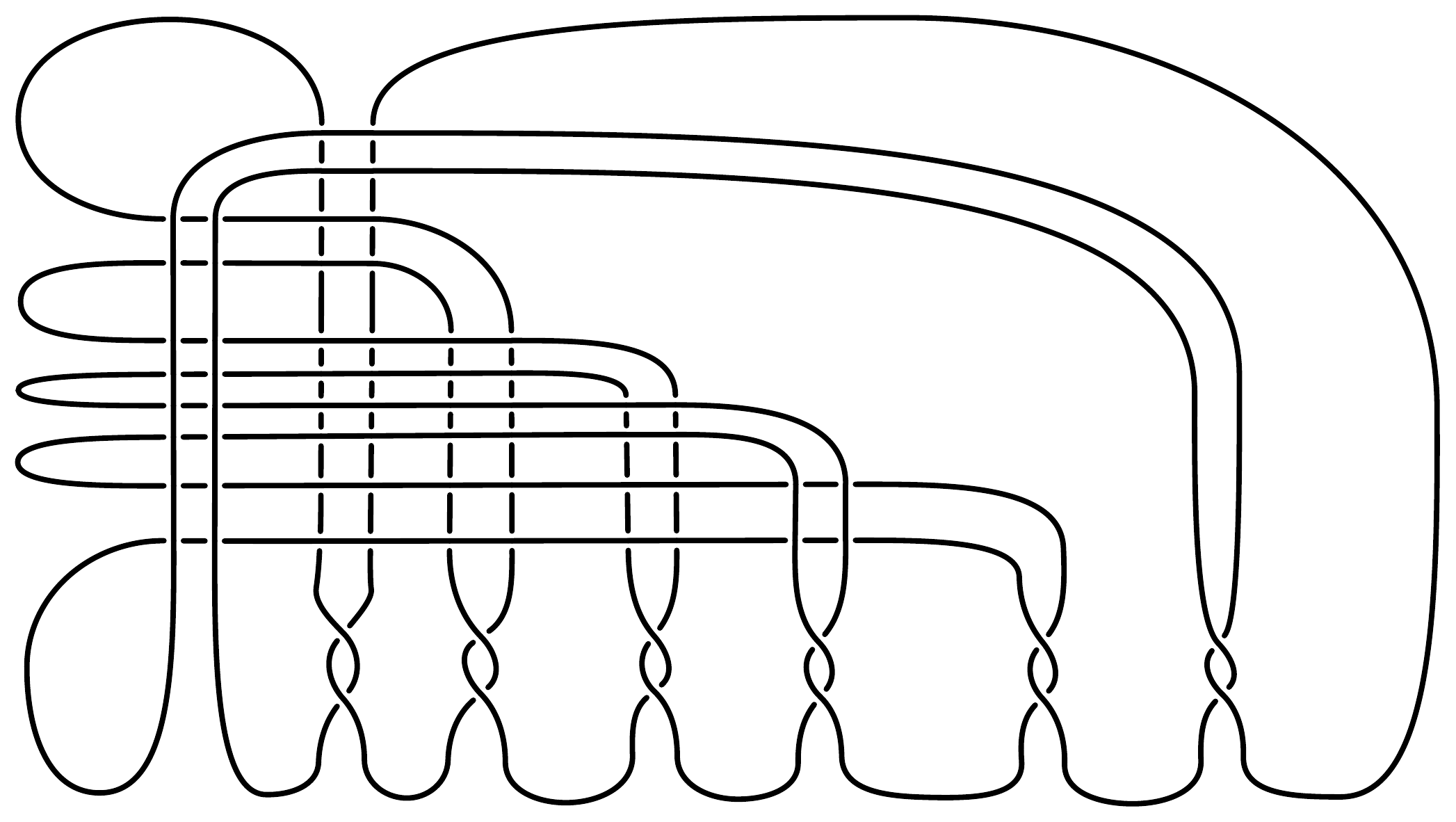}
    \caption{The knot $m(12n_{402})$ admits a genus three positive Hopf-plumbed basket.
    }
    \label{fig:12n_402}
\end{figure}

\begin{figure}[htbp] 
	 \centering
     \includegraphics[width=0.58\textwidth]{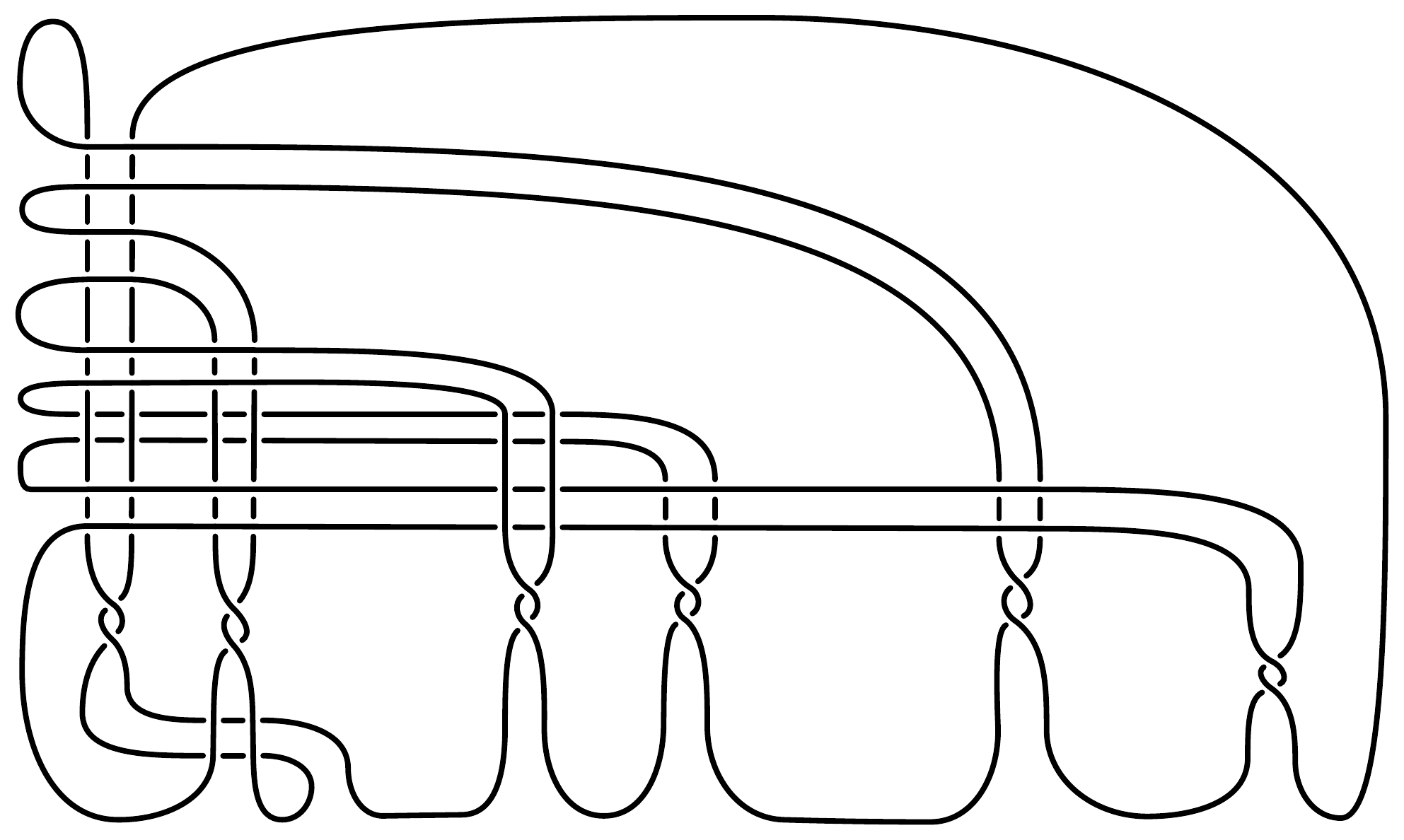}
    \caption{The knot $m(12n_{528})$ admits a genus three positive Hopf-plumbed basket.
    }
    \label{fig:12n_528}
\end{figure}

\newpage

\section{Flowcharts summarizing positivity notions}\label{subsec:flowchart}

In \Cref{fig:flowchart}, we summarize the known implications between the notions of positivity considered in this paper. \Cref{fig:flowchart2} summarizes the non-implications and emphasizes \Cref{question:pos_fib_T-pos}.

Regarding \Cref{fig:flowchart2}, note that the negative torus knots, \ie the knots $m(T_{2,{2k+1}})=T_{2,-2k-1}$, $k \geq 1$, provide easy examples of $\T$-homogeneous knots which are not $\T$-positive. These knots are the closures of the $T_2$-homogeneous braids $\sigma_1^{-2k-1}$ for $k \geq 1$. If they were $\T$-positive, they would be strongly quasipositive, thus positive by \Cref{theorem:pos_Seb}, which they are not, for example by considering their signature invariants~\cite{Traczyk}.

\begin{figure}[htbp]
     \centering
     \scalebox{0.7}{
 \begin{tikzpicture}[node distance=10mm and 10mm]
 \tikzstyle{box} = [rectangle, draw, text width=8em, text centered, rounded corners, minimum height=3em]
  \tikzstyle{largebox} = [rectangle, draw, text width=48em, text centered, rounded corners, minimum height=3em]
    \tikzstyle{middlebox} = [rectangle, draw, text width=15em, text centered, rounded corners, minimum height=3em]
 \tikzstyle{implies} = [-implies,double equal sign distance] 
 \tikzstyle{immediate} = [-implies,double equal sign distance, draw=Green]
 \tikzstyle{conditional} = [-implies,double equal sign distance, draw=Cerulean,dashed] 
 \tikzstyle{equiv} = [implies-implies,double equal sign distance] 

 \tikzstyle{notimplies} =[draw=red, -implies,double equal sign distance,
     preaction={draw=white,double,line width=4pt,shorten >=6pt,shorten <=6pt},
     decoration={markings, mark=at position 0.5 with {
         \draw [red, thick,-]
             ++ (-\StrikeThruDistance,-\StrikeThruDistance)
             -- ( \StrikeThruDistance, \StrikeThruDistance);
         }
     },
     postaction={decorate},
 ]
 \tikzstyle{notimplies2} =[draw=red,-implies,double equal sign distance,
     preaction={draw=white,double,line width=4pt,shorten >=6pt,shorten <=6pt},
     decoration={markings, mark=at position 0.5 with {
         \draw [red,thick,-]
             ++ (-\StrikeThruDistance,-\StrikeThruDistance)
             -- ( \StrikeThruDistance, \StrikeThruDistance);
         }
     },
     postaction={decorate}, 
 ]

 \node (bp) [largebox] {braid positive} ;
    \node (pos-basket) [box,below=40mm of bp] {positive Hopf-plumbed basket} ;
     \node (T-pos) [box,left=20mm of pos-basket] {$\T$-positive} ;
       \node (Thomo) [box,below=20mm of T-pos] {$\T$-homogeneous};
   \node (T-homo-sqp) [box,left=16mm of T-pos] {$\T$-homogeneous $\&$ strongly quasipositive};
     \node (basket) [box,below=20mm of pos-basket] {Hopf-plumbed basket} ;
 \node (tref) [box,right=23mm of pos-basket] {positive trefoil plumbing} ;     
      \node (pos-plumb) [middlebox,right=20mm of basket] {positive Hopf plumbing} ;
       \node (plumb-deplumb) [box,below=20mm of pos-plumb] {positive Hopf plumbing $\&$ deplumbing} ;
      \node (fib) [largebox,below=50mm of basket] {fibered} ;
       \node (sqp-fib) [box,right=16mm of plumb-deplumb] {strongly quasipositve $\&$ fibered};   
      \node (pos-fib) [middlebox,above=45mm of pos-plumb] {positive, fibered} ;


\draw[immediate] (bp.south south west) to[out=-90,in=90] (T-pos.north north west);
\draw[immediate] (T-pos) -- (Thomo);
\draw[immediate] (pos-basket) -- (basket);
 \draw[immediate] (pos-basket.south south east) to[out=-90,in=90] (pos-plumb.north north west);
 \draw[immediate] (pos-plumb) -- (plumb-deplumb);
  \draw[immediate] (tref.south) to[out=-90,in=90] (pos-plumb);

 \draw[implies] (bp.south south east) to[out=-90,in=90] node[pos=0.5,right,xshift=1ex] {\cite{stallings_1978}} (pos-fib.north north west);
 \draw[implies] (pos-fib.south south east) to node[midway, right,xshift=1ex] {\cite{cromwell}} (pos-plumb.north north east);
 \draw[implies] (basket) to node[pos=0.5,right,xshift=1ex,align=center] {\cite{stallings_1978},\\ Sect. \ref{Murasugi_sums}} (fib);
\draw[implies] (plumb-deplumb) to[out=-90,in=90] node[pos=0.5,right,xshift=1.5ex,yshift=-1ex,align=center] {\cite{Rudolph_V}} (fib.north north east);
\draw[implies] (bp) to[out=-90,in=110] 
node[pos=0.2,right,xshift=1ex] {\cite{BD:trefoil_plumbing}} (tref.north north west);

 \draw[equiv] (T-pos) --  node[midway,above,yshift=1ex]  {{Thm.~\ref{theorem:main_T-pos}}} (T-homo-sqp); 
 \draw[equiv] (T-pos) to node[pos=0.5,above,yshift=1ex,align=center]  {Thm. \ref{theorem:Hopf_plumbed}\\ \cite{Rudolph_Hopf_plumbing}} (pos-basket);
 \draw[equiv] (basket) -- node[midway,above,yshift=1ex] {\cite{Rudolph_Hopf_plumbing}}  (Thomo);
 \draw[equiv] (plumb-deplumb) -- node[pos=0.5,above,yshift=1ex,align=center] {\cite{Rudolph_V},\\ \cite{hedden05}} (sqp-fib);
 \end{tikzpicture}
 }
    \caption{Implications between various notions of positivity. Green arrows indicate immediate implications. Black arrows indicate non-obvious implications, for which a reference is provided.}
    \label{fig:flowchart}
\end{figure}

\newpage

\begin{figure}[htbp]
     \centering
     \scalebox{0.7}{
 \begin{tikzpicture}[node distance=10mm and 10mm]
 \tikzstyle{box} = [rectangle, draw, text width=8em, text centered, rounded corners, minimum height=3em]
  \tikzstyle{largebox} = [rectangle, draw, text width=48em, text centered, rounded corners, minimum height=3em]
    \tikzstyle{middlebox} = [rectangle, draw, text width=15em, text centered, rounded corners, minimum height=3em]
 \tikzstyle{implies} = [-implies,double equal sign distance] 
 \tikzstyle{immediate} = [-implies,double equal sign distance, draw=Green]
 \tikzstyle{conditional} = [-implies,double equal sign distance, draw=Cerulean,dashed] 
 \tikzstyle{equiv} = [implies-implies,double equal sign distance] 
  \tikzstyle{lightimplies} = [-implies,double equal sign distance, draw=gray]
   \tikzstyle{lightequiv} = [implies-implies,double equal sign distance,draw=gray] 

 \tikzstyle{notimplies} =[draw=red, -implies,double equal sign distance,
     preaction={draw=white,double,line width=4pt,shorten >=6pt,shorten <=6pt},
     decoration={markings, mark=at position 0.5 with {
         \draw [red, thick,-]
             ++ (-\StrikeThruDistance,-\StrikeThruDistance)
             -- ( \StrikeThruDistance, \StrikeThruDistance);
         }
     },
     postaction={decorate},
 ]
 \tikzstyle{notimplies2} =[draw=red,-implies,double equal sign distance,
     preaction={draw=white,double,line width=4pt,shorten >=6pt,shorten <=6pt},
     decoration={markings, mark=at position 0.5 with {
         \draw [red,thick,-]
             ++ (-\StrikeThruDistance,-\StrikeThruDistance)
             -- ( \StrikeThruDistance, \StrikeThruDistance);
         }
     },
     postaction={decorate}, 
 ]

 \node (bp) [largebox] {braid positive} ;
    \node (pos-basket) [box,below=40mm of bp] {positive Hopf-plumbed basket} ;
     \node (T-pos) [box,left=20mm of pos-basket] {$\T$-positive} ;
       \node (Thomo) [box,below=20mm of T-pos] {$\T$-homogeneous};
   \node (T-homo-sqp) [box,left=16mm of T-pos] {$\T$-homogeneous $\&$ strongly quasipositive};
     \node (basket) [box,below=20mm of pos-basket] {Hopf-plumbed basket} ;
 \node (tref) [box,right=23mm of pos-basket] {positive trefoil plumbing} ;     
      \node (pos-plumb) [middlebox,right=20mm of basket] {positive Hopf plumbing} ;
       \node (plumb-deplumb) [box,below=20mm of pos-plumb] {positive Hopf plumbing $\&$ deplumbing} ;
      \node (fib) [largebox,below=50mm of basket] {fibered} ;
       \node (sqp-fib) [box,right=16mm of plumb-deplumb] {strongly quasipositve $\&$ fibered};   
      \node (pos-fib) [middlebox,above=45mm of pos-plumb] {positive, fibered} ;

\draw[notimplies] (T-pos.north north west) to[out=90,in=-90] node[midway, left,xshift=-1ex,align=center] {$\infty$ many:\\
Prop.~\ref{prop:inf_T-pos_not_bp}}  (bp.south south west);
\draw[notimplies] (Thomo) to[out=90,in=-90] node[pos=0.5,right,xshift=1ex,align=center] {$T_{2,-2k-1}$, \\ $k \geq 1$} (T-pos);
\draw[notimplies] (basket) to[out=90,in=-90]
(pos-basket);
 \draw[notimplies] (pos-plumb.north north west) to[out=90,in=-90] node[pos=0.3,left,xshift=0ex,yshift=-3ex,align=center] {$\infty$ many: \cite{misev},\\ Sect. \ref{ex:T-pos_and_pos}}(pos-basket.south south east);
 \draw[notimplies] (plumb-deplumb) -- node[midway,right,xshift=1ex] {\cite{melvin_morton}} (pos-plumb);
  \draw[notimplies] (pos-plumb) to[out=90,in=-90] node[pos=0.75,right,xshift=1.5ex,align=center] {$11n_{183}$: \\ \cite{kegel2024unknottingfiberedpositiveknots}} (tref.south);

 \draw[notimplies] (pos-fib.north north west) to[out=90,in=-90] node[pos=0.5,right,xshift=1.5ex,align=center] {$\infty$ many: 
 \\ \cite[Prop. 5.3]{kegel2024unknottingfiberedpositiveknots}} (bp.south south east) ;
 \draw[notimplies] (pos-plumb.north north east) to node[midway, right,xshift=1ex, align=center] {$m(10_{145})$:\\ Sect. \ref{subsec:T-pos_tref}} (pos-fib.south south east) ;
 \draw[notimplies] (fib) to node[pos=0.5,right,xshift=1ex,align=center]  {\cite{melvin_morton}} (basket);
\draw[notimplies] (pos-basket.east south east) -- node[pos=0.5,below,yshift=-1ex,align=center] {$11n_{183}$: \\
\cite{kegel2024unknottingfiberedpositiveknots}} 
(tref.west south west);
\draw[notimplies] (fib.north north east) to[out=90,in=-90] node[pos=0.5,right,xshift=2.5ex,yshift=-1ex] {$4_1$} (plumb-deplumb) ;
\draw[notimplies] (pos-fib) to[out=-90,in=90,bend left=35] node[pos=0.35,left,xshift=-0.5ex,align=center] {$11n_{183}$: \\ \cite{kegel2024unknottingfiberedpositiveknots}} (tref.north north east);
\draw[notimplies] (tref.north north west) to[out=90,in=-90] node[pos=0.6,left,xshift=-1ex, align=center] {$m(10_{145})$:\\ Sect. \ref{subsec:T-pos_tref}}  (pos-fib.south south west);
\draw[notimplies] (T-pos.north north east) to[out=90,in=180] node[pos=0.55,below,yshift=-1ex,align=center] {$m(10_{145})$: \\ 
 Sect. \ref{ex:T-pos_and_pos}}  (pos-fib.west south west);

 \draw[lightequiv] (T-pos) --  node[midway,above,yshift=1ex]  {\textcolor{gray}{{Thm.~\ref{theorem:main_T-pos}}}} (T-homo-sqp); 
 \draw[lightequiv] (T-pos) to node[pos=0.5,above,yshift=1ex,align=center]  {\textcolor{gray}{Thm. \ref{theorem:Hopf_plumbed}}\\\textcolor{gray}{ \cite{Rudolph_Hopf_plumbing}}} (pos-basket);
 \draw[lightequiv] (basket) -- node[midway,above,yshift=1ex] {\textcolor{gray}{\cite{Rudolph_Hopf_plumbing}}}  (Thomo);
 \draw[lightequiv] (plumb-deplumb) -- node[pos=0.5,above,yshift=1ex,align=center] {\textcolor{gray}{\cite{Rudolph_V},}\\ \textcolor{gray}{\cite{hedden05}}} (sqp-fib);

 \draw[conditional] (pos-fib.west north west) to[out=180,in=90] node[pos=0.4,left,yshift=3ex] {\textcolor{Cerulean}{\Cref{question:pos_fib_T-pos}}} (T-pos.north);
 \draw[notimplies] (tref.west north west) -- node[pos=0.7,above,yshift=1ex,align=center] {$\infty$ many:\\ Prop.~\ref{prop:inf_tref_plumb_not_T-pos}}
 (pos-basket.east north east);
 \end{tikzpicture}
 }
   \caption{Non-implications between various notions of positivity and \Cref{question:pos_fib_T-pos}. Red arrows indicate non-implications and are accompanied by a reference. The red arrow without a reference follows from another red arrow and the gray equivalences. The arrow relating to \Cref{question:pos_fib_T-pos} is dashed blue.}
    \label{fig:flowchart2}
\end{figure}

 \newpage

\bibliographystyle{alpha}
\bibliography{bibliography}

\end{document}

%% file: sqpbraidgen6.pdf_tex
\begingroup%
  \makeatletter%
  \providecommand\color[2][]{%
    \errmessage{(Inkscape) Color is used for the text in Inkscape, but the package 'color.sty' is not loaded}%
    \renewcommand\color[2][]{}%
  }%
  \providecommand\transparent[1]{%
    \errmessage{(Inkscape) Transparency is used (non-zero) for the text in Inkscape, but the package 'transparent.sty' is not loaded}%
    \renewcommand\transparent[1]{}%
  }%
  \providecommand\rotatebox[2]{#2}%
  \newcommand*\fsize{\dimexpr\f@size pt\relax}%
  \newcommand*\lineheight[1]{\fontsize{\fsize}{#1\fsize}\selectfont}%
  \ifx\svgwidth\undefined%
    \setlength{\unitlength}{157.39740327bp}%
    \ifx\svgscale\undefined%
      \relax%
    \else%
      \setlength{\unitlength}{\unitlength * \real{\svgscale}}%
    \fi%
  \else%
    \setlength{\unitlength}{\svgwidth}%
  \fi%
  \global\let\svgwidth\undefined%
  \global\let\svgscale\undefined%
  \makeatother%
  \begin{picture}(1,1.27241402)%
    \lineheight{1}%
    \setlength\tabcolsep{0pt}%
    \put(0,0){\includegraphics[width=\unitlength,page=1]{sqpbraidgen6.pdf}}%
    \put(-0.00154919,1.23691415){\color[rgb]{0,0,0}\makebox(0,0)[lt]{\lineheight{1.25}\smash{\begin{tabular}[t]{l}$1$\\\end{tabular}}}}%
    \put(-0.0032449,0.87042958){\color[rgb]{0,0,0}\makebox(0,0)[lt]{\lineheight{1.25}\smash{\begin{tabular}[t]{l}$i$\\\end{tabular}}}}%
    \put(-0.00163293,0.00813169){\color[rgb]{0,0,0}\makebox(0,0)[lt]{\lineheight{1.25}\smash{\begin{tabular}[t]{l}$n$\\\end{tabular}}}}%
    \put(-0.00157012,0.37136626){\color[rgb]{0,0,0}\makebox(0,0)[lt]{\lineheight{1.25}\smash{\begin{tabular}[t]{l}$j$\end{tabular}}}}%
  \end{picture}%
\endgroup%

%% file: lineargraphT_n.pdf_tex
\begingroup%
  \makeatletter%
  \providecommand\color[2][]{%
    \errmessage{(Inkscape) Color is used for the text in Inkscape, but the package 'color.sty' is not loaded}%
    \renewcommand\color[2][]{}%
  }%
  \providecommand\transparent[1]{%
    \errmessage{(Inkscape) Transparency is used (non-zero) for the text in Inkscape, but the package 'transparent.sty' is not loaded}%
    \renewcommand\transparent[1]{}%
  }%
  \providecommand\rotatebox[2]{#2}%
  \newcommand*\fsize{\dimexpr\f@size pt\relax}%
  \newcommand*\lineheight[1]{\fontsize{\fsize}{#1\fsize}\selectfont}%
  \ifx\svgwidth\undefined%
    \setlength{\unitlength}{189.32101272bp}%
    \ifx\svgscale\undefined%
      \relax%
    \else%
      \setlength{\unitlength}{\unitlength * \real{\svgscale}}%
    \fi%
  \else%
    \setlength{\unitlength}{\svgwidth}%
  \fi%
  \global\let\svgwidth\undefined%
  \global\let\svgscale\undefined%
  \makeatother%
  \begin{picture}(1,0.16070158)%
    \lineheight{1}%
    \setlength\tabcolsep{0pt}%
    \put(0,0){\includegraphics[width=\unitlength,page=1]{lineargraphT_n.pdf}}%
    \put(-0.00269773,0.13118775){\color[rgb]{0,0,0}\makebox(0,0)[lt]{\lineheight{1.25}\smash{\begin{tabular}[t]{l}$1$\\\end{tabular}}}}%
    \put(0.14702915,0.13118775){\color[rgb]{0,0,0}\makebox(0,0)[lt]{\lineheight{1.25}\smash{\begin{tabular}[t]{l}$2$\\\end{tabular}}}}%
    \put(0.29675933,0.13118775){\color[rgb]{0,0,0}\makebox(0,0)[lt]{\lineheight{1.25}\smash{\begin{tabular}[t]{l}$3$\\\end{tabular}}}}%
    \put(0.66658166,0.13118775){\color[rgb]{0,0,0}\makebox(0,0)[lt]{\lineheight{1.25}\smash{\begin{tabular}[t]{l}$n-1$\\\end{tabular}}}}%
    \put(0.93459296,0.13118775){\color[rgb]{0,0,0}\makebox(0,0)[lt]{\lineheight{1.25}\smash{\begin{tabular}[t]{l}$n$\\\end{tabular}}}}%
  \end{picture}%
\endgroup%

%% file: T-homo-braid-and-surface.pdf_tex
\begingroup%
  \makeatletter%
  \providecommand\color[2][]{%
    \errmessage{(Inkscape) Color is used for the text in Inkscape, but the package 'color.sty' is not loaded}%
    \renewcommand\color[2][]{}%
  }%
  \providecommand\transparent[1]{%
    \errmessage{(Inkscape) Transparency is used (non-zero) for the text in Inkscape, but the package 'transparent.sty' is not loaded}%
    \renewcommand\transparent[1]{}%
  }%
  \providecommand\rotatebox[2]{#2}%
  \newcommand*\fsize{\dimexpr\f@size pt\relax}%
  \newcommand*\lineheight[1]{\fontsize{\fsize}{#1\fsize}\selectfont}%
  \ifx\svgwidth\undefined%
    \setlength{\unitlength}{539.51474167bp}%
    \ifx\svgscale\undefined%
      \relax%
    \else%
      \setlength{\unitlength}{\unitlength * \real{\svgscale}}%
    \fi%
  \else%
    \setlength{\unitlength}{\svgwidth}%
  \fi%
  \global\let\svgwidth\undefined%
  \global\let\svgscale\undefined%
  \makeatother%
  \begin{picture}(1,0.52822212)%
    \lineheight{1}%
    \setlength\tabcolsep{0pt}%
    \put(0,0){\includegraphics[width=\unitlength,page=1]{T-homo-braid-and-surface.pdf}}%
  \end{picture}%
\endgroup%

%% file: sample_graphs.pdf_tex
\begingroup%
  \makeatletter%
  \providecommand\color[2][]{%
    \errmessage{(Inkscape) Color is used for the text in Inkscape, but the package 'color.sty' is not loaded}%
    \renewcommand\color[2][]{}%
  }%
  \providecommand\transparent[1]{%
    \errmessage{(Inkscape) Transparency is used (non-zero) for the text in Inkscape, but the package 'transparent.sty' is not loaded}%
    \renewcommand\transparent[1]{}%
  }%
  \providecommand\rotatebox[2]{#2}%
  \newcommand*\fsize{\dimexpr\f@size pt\relax}%
  \newcommand*\lineheight[1]{\fontsize{\fsize}{#1\fsize}\selectfont}%
  \ifx\svgwidth\undefined%
    \setlength{\unitlength}{553.30868218bp}%
    \ifx\svgscale\undefined%
      \relax%
    \else%
      \setlength{\unitlength}{\unitlength * \real{\svgscale}}%
    \fi%
  \else%
    \setlength{\unitlength}{\svgwidth}%
  \fi%
  \global\let\svgwidth\undefined%
  \global\let\svgscale\undefined%
  \makeatother%
  \begin{picture}(1,0.08059998)%
    \lineheight{1}%
    \setlength\tabcolsep{0pt}%
    \put(0,0){\includegraphics[width=\unitlength,page=1]{sample_graphs.pdf}}%
    \put(-0.00092306,0.07050148){\color[rgb]{0,0,0}\makebox(0,0)[lt]{\lineheight{1.25}\smash{\begin{tabular}[t]{l}$1$\\\end{tabular}}}}%
    \put(0.05030776,0.07050148){\color[rgb]{0,0,0}\makebox(0,0)[lt]{\lineheight{1.25}\smash{\begin{tabular}[t]{l}$2$\\\end{tabular}}}}%
    \put(0.10153969,0.07050148){\color[rgb]{0,0,0}\makebox(0,0)[lt]{\lineheight{1.25}\smash{\begin{tabular}[t]{l}$3$\\\end{tabular}}}}%
    \put(0.15277051,0.07050148){\color[rgb]{0,0,0}\makebox(0,0)[lt]{\lineheight{1.25}\smash{\begin{tabular}[t]{l}$4$\\\end{tabular}}}}%
    \put(0.2040013,0.07050148){\color[rgb]{0,0,0}\makebox(0,0)[lt]{\lineheight{1.25}\smash{\begin{tabular}[t]{l}$5$\\\end{tabular}}}}%
    \put(0.25523214,0.07050148){\color[rgb]{0,0,0}\makebox(0,0)[lt]{\lineheight{1.25}\smash{\begin{tabular}[t]{l}$6$\\\end{tabular}}}}%
    \put(0.30646297,0.07050148){\color[rgb]{0,0,0}\makebox(0,0)[lt]{\lineheight{1.25}\smash{\begin{tabular}[t]{l}$7$\\\end{tabular}}}}%
    \put(0,0){\includegraphics[width=\unitlength,page=2]{sample_graphs.pdf}}%
    \put(0.38586963,0.07050148){\color[rgb]{0,0,0}\makebox(0,0)[lt]{\lineheight{1.25}\smash{\begin{tabular}[t]{l}$1$\\\end{tabular}}}}%
    \put(0.43710035,0.07050148){\color[rgb]{0,0,0}\makebox(0,0)[lt]{\lineheight{1.25}\smash{\begin{tabular}[t]{l}$2$\\\end{tabular}}}}%
    \put(0.48833228,0.07050148){\color[rgb]{0,0,0}\makebox(0,0)[lt]{\lineheight{1.25}\smash{\begin{tabular}[t]{l}$3$\\\end{tabular}}}}%
    \put(0.53956318,0.07050148){\color[rgb]{0,0,0}\makebox(0,0)[lt]{\lineheight{1.25}\smash{\begin{tabular}[t]{l}$4$\\\end{tabular}}}}%
    \put(0.59079394,0.07050148){\color[rgb]{0,0,0}\makebox(0,0)[lt]{\lineheight{1.25}\smash{\begin{tabular}[t]{l}$5$\\\end{tabular}}}}%
    \put(0.64202485,0.07050148){\color[rgb]{0,0,0}\makebox(0,0)[lt]{\lineheight{1.25}\smash{\begin{tabular}[t]{l}$6$\\\end{tabular}}}}%
    \put(0.69325568,0.07050148){\color[rgb]{0,0,0}\makebox(0,0)[lt]{\lineheight{1.25}\smash{\begin{tabular}[t]{l}$7$\\\end{tabular}}}}%
    \put(0,0){\includegraphics[width=\unitlength,page=3]{sample_graphs.pdf}}%
    \put(0.77266016,0.07050148){\color[rgb]{0,0,0}\makebox(0,0)[lt]{\lineheight{1.25}\smash{\begin{tabular}[t]{l}$1$\\\end{tabular}}}}%
    \put(0.82389099,0.07050148){\color[rgb]{0,0,0}\makebox(0,0)[lt]{\lineheight{1.25}\smash{\begin{tabular}[t]{l}$2$\\\end{tabular}}}}%
    \put(0.87512288,0.07050148){\color[rgb]{0,0,0}\makebox(0,0)[lt]{\lineheight{1.25}\smash{\begin{tabular}[t]{l}$3$\\\end{tabular}}}}%
    \put(0.92635371,0.07050148){\color[rgb]{0,0,0}\makebox(0,0)[lt]{\lineheight{1.25}\smash{\begin{tabular}[t]{l}$4$\\\end{tabular}}}}%
    \put(0.97758447,0.07050148){\color[rgb]{0,0,0}\makebox(0,0)[lt]{\lineheight{1.25}\smash{\begin{tabular}[t]{l}$5$\\\end{tabular}}}}%
  \end{picture}%
\endgroup%

%% file: murasugi_sum_homo5.pdf_tex
\begingroup%
  \makeatletter%
  \providecommand\color[2][]{%
    \errmessage{(Inkscape) Color is used for the text in Inkscape, but the package 'color.sty' is not loaded}%
    \renewcommand\color[2][]{}%
  }%
  \providecommand\transparent[1]{%
    \errmessage{(Inkscape) Transparency is used (non-zero) for the text in Inkscape, but the package 'transparent.sty' is not loaded}%
    \renewcommand\transparent[1]{}%
  }%
  \providecommand\rotatebox[2]{#2}%
  \newcommand*\fsize{\dimexpr\f@size pt\relax}%
  \newcommand*\lineheight[1]{\fontsize{\fsize}{#1\fsize}\selectfont}%
  \ifx\svgwidth\undefined%
    \setlength{\unitlength}{559.56959161bp}%
    \ifx\svgscale\undefined%
      \relax%
    \else%
      \setlength{\unitlength}{\unitlength * \real{\svgscale}}%
    \fi%
  \else%
    \setlength{\unitlength}{\svgwidth}%
  \fi%
  \global\let\svgwidth\undefined%
  \global\let\svgscale\undefined%
  \makeatother%
  \begin{picture}(1,0.38244252)%
    \lineheight{1}%
    \setlength\tabcolsep{0pt}%
    \put(0,0){\includegraphics[width=\unitlength,page=1]{murasugi_sum_homo5.pdf}}%
    \put(0.0954028,0.10829085){\color[rgb]{0,0,1}\makebox(0,0)[lt]{\lineheight{1.25}\smash{\begin{tabular}[t]{l}$P_1$\end{tabular}}}}%
    \put(0.0954028,0.10829085){\color[rgb]{0,0,1}\makebox(0,0)[lt]{\lineheight{1.25}\smash{\begin{tabular}[t]{l}$P_1$\end{tabular}}}}%
    \put(0.65219535,0.21074589){\color[rgb]{0,0.50196078,0}\makebox(0,0)[lt]{\lineheight{1.25}\smash{\begin{tabular}[t]{l}$P_3$\end{tabular}}}}%
    \put(0.65219535,0.16009045){\color[rgb]{0,0.50196078,0}\makebox(0,0)[lt]{\lineheight{1.25}\smash{\begin{tabular}[t]{l}$P_3$\end{tabular}}}}%
    \put(0.96068346,0.00718766){\color[rgb]{0,0,0}\makebox(0,0)[lt]{\lineheight{1.25}\smash{\begin{tabular}[t]{l}$F_1$\end{tabular}}}}%
    \put(0.96068516,0.1085038){\color[rgb]{0.00392157,0,0}\makebox(0,0)[lt]{\lineheight{1.25}\smash{\begin{tabular}[t]{l}$F_2$\end{tabular}}}}%
    \put(0.96068348,0.31113405){\color[rgb]{0,0,0}\makebox(0,0)[lt]{\lineheight{1.25}\smash{\begin{tabular}[t]{l}$F_3$\end{tabular}}}}%
    \put(0.96068348,0.20981884){\color[rgb]{0,0,0}\makebox(0,0)[lt]{\lineheight{1.25}\smash{\begin{tabular}[t]{l}$F_4$\end{tabular}}}}%
    \put(0,0){\includegraphics[width=\unitlength,page=2]{murasugi_sum_homo5.pdf}}%
    \put(0.23020485,0.36342336){\color[rgb]{0.8,0.33333333,0}\makebox(0,0)[lt]{\lineheight{1.25}\smash{\begin{tabular}[t]{l}$P_2$\end{tabular}}}}%
    \put(0.23020485,0.31276575){\color[rgb]{0.8,0.33333333,0}\makebox(0,0)[lt]{\lineheight{1.25}\smash{\begin{tabular}[t]{l}$P_2$\end{tabular}}}}%
    \put(0,0){\includegraphics[width=\unitlength,page=3]{murasugi_sum_homo5.pdf}}%
    \put(0.0954028,0.10829085){\color[rgb]{0,0,1}\makebox(0,0)[lt]{\lineheight{1.25}\smash{\begin{tabular}[t]{l}$P_1$\end{tabular}}}}%
    \put(0.0954028,0.05763324){\color[rgb]{0,0,1}\makebox(0,0)[lt]{\lineheight{1.25}\smash{\begin{tabular}[t]{l}$P_1$\end{tabular}}}}%
  \end{picture}%
\endgroup%

%% file: isotopy_neg_gen.pdf_tex
\begingroup%
  \makeatletter%
  \providecommand\color[2][]{%
    \errmessage{(Inkscape) Color is used for the text in Inkscape, but the package 'color.sty' is not loaded}%
    \renewcommand\color[2][]{}%
  }%
  \providecommand\transparent[1]{%
    \errmessage{(Inkscape) Transparency is used (non-zero) for the text in Inkscape, but the package 'transparent.sty' is not loaded}%
    \renewcommand\transparent[1]{}%
  }%
  \providecommand\rotatebox[2]{#2}%
  \newcommand*\fsize{\dimexpr\f@size pt\relax}%
  \newcommand*\lineheight[1]{\fontsize{\fsize}{#1\fsize}\selectfont}%
  \ifx\svgwidth\undefined%
    \setlength{\unitlength}{539.51474167bp}%
    \ifx\svgscale\undefined%
      \relax%
    \else%
      \setlength{\unitlength}{\unitlength * \real{\svgscale}}%
    \fi%
  \else%
    \setlength{\unitlength}{\svgwidth}%
  \fi%
  \global\let\svgwidth\undefined%
  \global\let\svgscale\undefined%
  \makeatother%
  \begin{picture}(1,0.23897016)%
    \lineheight{1}%
    \setlength\tabcolsep{0pt}%
    \put(0,0){\includegraphics[width=\unitlength,page=1]{isotopy_neg_gen.pdf}}%
    \put(0.49554671,0.03103237){\color[rgb]{0,0.50196078,0}\makebox(0,0)[lt]{\lineheight{1.25}\smash{\begin{tabular}[t]{l}$2\pi$\end{tabular}}}}%
    \put(0,0){\includegraphics[width=\unitlength,page=2]{isotopy_neg_gen.pdf}}%
  \end{picture}%
\endgroup%

%% file: cabling_BKL-gen_7.pdf_tex
\begingroup%
  \makeatletter%
  \providecommand\color[2][]{%
    \errmessage{(Inkscape) Color is used for the text in Inkscape, but the package 'color.sty' is not loaded}%
    \renewcommand\color[2][]{}%
  }%
  \providecommand\transparent[1]{%
    \errmessage{(Inkscape) Transparency is used (non-zero) for the text in Inkscape, but the package 'transparent.sty' is not loaded}%
    \renewcommand\transparent[1]{}%
  }%
  \providecommand\rotatebox[2]{#2}%
  \newcommand*\fsize{\dimexpr\f@size pt\relax}%
  \newcommand*\lineheight[1]{\fontsize{\fsize}{#1\fsize}\selectfont}%
  \ifx\svgwidth\undefined%
    \setlength{\unitlength}{776.34775237bp}%
    \ifx\svgscale\undefined%
      \relax%
    \else%
      \setlength{\unitlength}{\unitlength * \real{\svgscale}}%
    \fi%
  \else%
    \setlength{\unitlength}{\svgwidth}%
  \fi%
  \global\let\svgwidth\undefined%
  \global\let\svgscale\undefined%
  \makeatother%
  \begin{picture}(1,0.65487904)%
    \lineheight{1}%
    \setlength\tabcolsep{0pt}%
    \put(0,0){\includegraphics[width=\unitlength,page=1]{cabling_BKL-gen_7.pdf}}%
    \put(-0.00031409,0.62983315){\color[rgb]{0,0,0}\makebox(0,0)[lt]{\lineheight{1.25}\smash{\begin{tabular}[t]{l}$1$\\\end{tabular}}}}%
    \put(-0.00065788,0.55553175){\color[rgb]{0,0,0}\makebox(0,0)[lt]{\lineheight{1.25}\smash{\begin{tabular}[t]{l}$i$\\\end{tabular}}}}%
    \put(-0.00033106,0.38070875){\color[rgb]{0,0,0}\makebox(0,0)[lt]{\lineheight{1.25}\smash{\begin{tabular}[t]{l}$n$\\\end{tabular}}}}%
    \put(-0.00031833,0.45435123){\color[rgb]{0,0,0}\makebox(0,0)[lt]{\lineheight{1.25}\smash{\begin{tabular}[t]{l}$j$\end{tabular}}}}%
    \put(0,0){\includegraphics[width=\unitlength,page=2]{cabling_BKL-gen_7.pdf}}%
    \put(0.20597729,0.51548279){\color[rgb]{0,0,0}\makebox(0,0)[lt]{\lineheight{1.25}\smash{\begin{tabular}[t]{l}$(p,0)$-cable\\\end{tabular}}}}%
    \put(0,0){\includegraphics[width=\unitlength,page=3]{cabling_BKL-gen_7.pdf}}%
    \put(0.59496632,0.51583353){\color[rgb]{0,0,0}\makebox(0,0)[lt]{\lineheight{1.25}\smash{\begin{tabular}[t]{l}isotopy\end{tabular}}}}%
    \put(0,0){\includegraphics[width=\unitlength,page=4]{cabling_BKL-gen_7.pdf}}%
    \put(-0.00031409,0.2647062){\color[rgb]{0,0,0}\makebox(0,0)[lt]{\lineheight{1.25}\smash{\begin{tabular}[t]{l}$1$\\\end{tabular}}}}%
    \put(-0.00017485,0.19233694){\color[rgb]{0,0,0}\makebox(0,0)[lt]{\lineheight{1.25}\smash{\begin{tabular}[t]{l}$i$\\\end{tabular}}}}%
    \put(-0.00033106,0.01558179){\color[rgb]{0,0,0}\makebox(0,0)[lt]{\lineheight{1.25}\smash{\begin{tabular}[t]{l}$n$\\\end{tabular}}}}%
    \put(-0.00055985,0.08970731){\color[rgb]{0,0,0}\makebox(0,0)[lt]{\lineheight{1.25}\smash{\begin{tabular}[t]{l}$j$\end{tabular}}}}%
    \put(0.20597729,0.15035586){\color[rgb]{0,0,0}\makebox(0,0)[lt]{\lineheight{1.25}\smash{\begin{tabular}[t]{l}$(p,0)$-cable\\\end{tabular}}}}%
    \put(0,0){\includegraphics[width=\unitlength,page=5]{cabling_BKL-gen_7.pdf}}%
    \put(0.00017033,0.31054627){\color[rgb]{0,0,0}\makebox(0,0)[lt]{\lineheight{1.25}\smash{\begin{tabular}[t]{l}Corresponding fence diagrams:\end{tabular}}}}%
    \put(0,0){\includegraphics[width=\unitlength,page=6]{cabling_BKL-gen_7.pdf}}%
    \put(0.96060824,0.64768177){\color[rgb]{0,0,0}\makebox(0,0)[lt]{\lineheight{1.25}\smash{\begin{tabular}[t]{l}$1$\\\end{tabular}}}}%
    \put(0.95855896,0.57653735){\color[rgb]{0,0,0}\makebox(0,0)[lt]{\lineheight{1.25}\smash{\begin{tabular}[t]{l}$p(i-1)$\\\end{tabular}}}}%
    \put(0.96060824,0.36700347){\color[rgb]{0,0,0}\makebox(0,0)[lt]{\lineheight{1.25}\smash{\begin{tabular}[t]{l}$pn$\\\end{tabular}}}}%
    \put(0.96060824,0.44802034){\color[rgb]{0,0,0}\makebox(0,0)[lt]{\lineheight{1.25}\smash{\begin{tabular}[t]{l}$pj$\end{tabular}}}}%
    \put(0.96060824,0.63078442){\color[rgb]{0,0,0}\makebox(0,0)[lt]{\lineheight{1.25}\smash{\begin{tabular}[t]{l}$p$\\\end{tabular}}}}%
    \put(0.96060824,0.55110189){\color[rgb]{0,0,0}\makebox(0,0)[lt]{\lineheight{1.25}\smash{\begin{tabular}[t]{l}$pi$\\\end{tabular}}}}%
    \put(0.96060824,0.52433246){\color[rgb]{0,0,0}\makebox(0,0)[lt]{\lineheight{1.25}\smash{\begin{tabular}[t]{l}$p(i+1)$\\\end{tabular}}}}%
    \put(0.96060824,0.42164455){\color[rgb]{0,0,0}\makebox(0,0)[lt]{\lineheight{1.25}\smash{\begin{tabular}[t]{l}$p(j+1)$\\\end{tabular}}}}%
    \put(0.59548345,0.28255721){\color[rgb]{0,0,0}\makebox(0,0)[lt]{\lineheight{1.25}\smash{\begin{tabular}[t]{l}$1$\\\end{tabular}}}}%
    \put(0.59343416,0.2114128){\color[rgb]{0,0,0}\makebox(0,0)[lt]{\lineheight{1.25}\smash{\begin{tabular}[t]{l}$p(i-1)$\\\end{tabular}}}}%
    \put(0.59548345,0.00188157){\color[rgb]{0,0,0}\makebox(0,0)[lt]{\lineheight{1.25}\smash{\begin{tabular}[t]{l}$pn$\\\end{tabular}}}}%
    \put(0.59548345,0.08289578){\color[rgb]{0,0,0}\makebox(0,0)[lt]{\lineheight{1.25}\smash{\begin{tabular}[t]{l}$pj$\end{tabular}}}}%
    \put(0.59548345,0.26565988){\color[rgb]{0,0,0}\makebox(0,0)[lt]{\lineheight{1.25}\smash{\begin{tabular}[t]{l}$p$\\\end{tabular}}}}%
    \put(0.59548345,0.1859773){\color[rgb]{0,0,0}\makebox(0,0)[lt]{\lineheight{1.25}\smash{\begin{tabular}[t]{l}$pi$\\\end{tabular}}}}%
    \put(0.59548345,0.1592079){\color[rgb]{0,0,0}\makebox(0,0)[lt]{\lineheight{1.25}\smash{\begin{tabular}[t]{l}$p(i+1)$\\\end{tabular}}}}%
    \put(0.59548345,0.05652247){\color[rgb]{0,0,0}\makebox(0,0)[lt]{\lineheight{1.25}\smash{\begin{tabular}[t]{l}$p(j+1)$\\\end{tabular}}}}%
    \put(0,0){\includegraphics[width=\unitlength,page=7]{cabling_BKL-gen_7.pdf}}%
  \end{picture}%
\endgroup%

%% file: fence_diagram_new4.pdf_tex
\begingroup%
  \makeatletter%
  \providecommand\color[2][]{%
    \errmessage{(Inkscape) Color is used for the text in Inkscape, but the package 'color.sty' is not loaded}%
    \renewcommand\color[2][]{}%
  }%
  \providecommand\transparent[1]{%
    \errmessage{(Inkscape) Transparency is used (non-zero) for the text in Inkscape, but the package 'transparent.sty' is not loaded}%
    \renewcommand\transparent[1]{}%
  }%
  \providecommand\rotatebox[2]{#2}%
  \newcommand*\fsize{\dimexpr\f@size pt\relax}%
  \newcommand*\lineheight[1]{\fontsize{\fsize}{#1\fsize}\selectfont}%
  \ifx\svgwidth\undefined%
    \setlength{\unitlength}{382.25674366bp}%
    \ifx\svgscale\undefined%
      \relax%
    \else%
      \setlength{\unitlength}{\unitlength * \real{\svgscale}}%
    \fi%
  \else%
    \setlength{\unitlength}{\svgwidth}%
  \fi%
  \global\let\svgwidth\undefined%
  \global\let\svgscale\undefined%
  \makeatother%
  \begin{picture}(1,0.77716121)%
    \lineheight{1}%
    \setlength\tabcolsep{0pt}%
    \put(0,0){\includegraphics[width=\unitlength,page=1]{fence_diagram_new4.pdf}}%
    \put(-0.000837,0.67218188){\color[rgb]{0,0,0}\makebox(0,0)[lt]{\lineheight{1.25}\smash{\begin{tabular}[t]{l}$1$\\\end{tabular}}}}%
    \put(-0.00055421,0.52520286){\color[rgb]{0,0,0}\makebox(0,0)[lt]{\lineheight{1.25}\smash{\begin{tabular}[t]{l}$i$\\\end{tabular}}}}%
    \put(-0.00087148,0.16622044){\color[rgb]{0,0,0}\makebox(0,0)[lt]{\lineheight{1.25}\smash{\begin{tabular}[t]{l}$n$\\\end{tabular}}}}%
    \put(-0.00133612,0.31676633){\color[rgb]{0,0,0}\makebox(0,0)[lt]{\lineheight{1.25}\smash{\begin{tabular}[t]{l}$j$\end{tabular}}}}%
    \put(0.40255965,0.43994102){\color[rgb]{0,0,0}\makebox(0,0)[lt]{\lineheight{1.25}\smash{\begin{tabular}[t]{l}$(p,0)$-cable\\\end{tabular}}}}%
    \put(0,0){\includegraphics[width=\unitlength,page=2]{fence_diagram_new4.pdf}}%
    \put(0.91999696,0.76254383){\color[rgb]{0,0,0}\makebox(0,0)[lt]{\lineheight{1.25}\smash{\begin{tabular}[t]{l}$1$\\\end{tabular}}}}%
    \put(0.91999706,0.5833683){\color[rgb]{0,0,0}\makebox(0,0)[lt]{\lineheight{1.25}\smash{\begin{tabular}[t]{l}$p(i-1)$\\\end{tabular}}}}%
    \put(0.91999696,0.00382132){\color[rgb]{0,0,0}\makebox(0,0)[lt]{\lineheight{1.25}\smash{\begin{tabular}[t]{l}$pn$\\\end{tabular}}}}%
    \put(0.91999696,0.2044294){\color[rgb]{0,0,0}\makebox(0,0)[lt]{\lineheight{1.25}\smash{\begin{tabular}[t]{l}$pj$\end{tabular}}}}%
    \put(0.91999696,0.71001684){\color[rgb]{0,0,0}\makebox(0,0)[lt]{\lineheight{1.25}\smash{\begin{tabular}[t]{l}$p$\\\end{tabular}}}}%
    \put(0.91999696,0.49147617){\color[rgb]{0,0,0}\makebox(0,0)[lt]{\lineheight{1.25}\smash{\begin{tabular}[t]{l}$pi$\\\end{tabular}}}}%
    \put(0.91999696,0.41907286){\color[rgb]{0,0,0}\makebox(0,0)[lt]{\lineheight{1.25}\smash{\begin{tabular}[t]{l}$p(i+1)$\\\end{tabular}}}}%
    \put(0.91999696,0.13074942){\color[rgb]{0,0,0}\makebox(0,0)[lt]{\lineheight{1.25}\smash{\begin{tabular}[t]{l}$p(j+1)$\\\end{tabular}}}}%
    \put(0,0){\includegraphics[width=\unitlength,page=3]{fence_diagram_new4.pdf}}%
    \put(0.63307459,0.73384997){\color[rgb]{0,0,0}\makebox(0,0)[lt]{\lineheight{1.25}\smash{\begin{tabular}[t]{l}$p$\\\end{tabular}}}}%
    \put(0,0){\includegraphics[width=\unitlength,page=4]{fence_diagram_new4.pdf}}%
    \put(0.63307459,0.6079462){\color[rgb]{0,0,0}\makebox(0,0)[lt]{\lineheight{1.25}\smash{\begin{tabular}[t]{l}$p$\\\end{tabular}}}}%
    \put(0,0){\includegraphics[width=\unitlength,page=5]{fence_diagram_new4.pdf}}%
    \put(0.63307459,0.44501165){\color[rgb]{0,0,0}\makebox(0,0)[lt]{\lineheight{1.25}\smash{\begin{tabular}[t]{l}$p$\\\end{tabular}}}}%
    \put(0,0){\includegraphics[width=\unitlength,page=6]{fence_diagram_new4.pdf}}%
    \put(0.63307459,0.3191771){\color[rgb]{0,0,0}\makebox(0,0)[lt]{\lineheight{1.25}\smash{\begin{tabular}[t]{l}$p$\\\end{tabular}}}}%
    \put(0,0){\includegraphics[width=\unitlength,page=7]{fence_diagram_new4.pdf}}%
    \put(0.63307459,0.15557514){\color[rgb]{0,0,0}\makebox(0,0)[lt]{\lineheight{1.25}\smash{\begin{tabular}[t]{l}$p$\\\end{tabular}}}}%
    \put(0,0){\includegraphics[width=\unitlength,page=8]{fence_diagram_new4.pdf}}%
    \put(0.63307459,0.02830197){\color[rgb]{0,0,0}\makebox(0,0)[lt]{\lineheight{1.25}\smash{\begin{tabular}[t]{l}$p$\\\end{tabular}}}}%
    \put(0,0){\includegraphics[width=\unitlength,page=9]{fence_diagram_new4.pdf}}%
    \put(0.63307214,0.52941365){\color[rgb]{0,0,0}\makebox(0,0)[lt]{\lineheight{1.25}\smash{\begin{tabular}[t]{l}$p$\\\end{tabular}}}}%
    \put(0,0){\includegraphics[width=\unitlength,page=10]{fence_diagram_new4.pdf}}%
    \put(0.63307214,0.23881953){\color[rgb]{0,0,0}\makebox(0,0)[lt]{\lineheight{1.25}\smash{\begin{tabular}[t]{l}$p$\\\end{tabular}}}}%
  \end{picture}%
\endgroup%

%% file: delta_cabling_v2.pdf_tex
\begingroup%
  \makeatletter%
  \providecommand\color[2][]{%
    \errmessage{(Inkscape) Color is used for the text in Inkscape, but the package 'color.sty' is not loaded}%
    \renewcommand\color[2][]{}%
  }%
  \providecommand\transparent[1]{%
    \errmessage{(Inkscape) Transparency is used (non-zero) for the text in Inkscape, but the package 'transparent.sty' is not loaded}%
    \renewcommand\transparent[1]{}%
  }%
  \providecommand\rotatebox[2]{#2}%
  \newcommand*\fsize{\dimexpr\f@size pt\relax}%
  \newcommand*\lineheight[1]{\fontsize{\fsize}{#1\fsize}\selectfont}%
  \ifx\svgwidth\undefined%
    \setlength{\unitlength}{623.6220905bp}%
    \ifx\svgscale\undefined%
      \relax%
    \else%
      \setlength{\unitlength}{\unitlength * \real{\svgscale}}%
    \fi%
  \else%
    \setlength{\unitlength}{\svgwidth}%
  \fi%
  \global\let\svgwidth\undefined%
  \global\let\svgscale\undefined%
  \makeatother%
  \begin{picture}(1,0.59699363)%
    \lineheight{1}%
    \setlength\tabcolsep{0pt}%
    \put(0.51963552,0.58803373){\color[rgb]{0,0,0}\makebox(0,0)[lt]{\lineheight{1.25}\smash{\begin{tabular}[t]{l}$1$\\\end{tabular}}}}%
    \put(0.51554463,0.33863238){\color[rgb]{0,0,0}\makebox(0,0)[lt]{\lineheight{1.25}\smash{\begin{tabular}[t]{l}$pn$\\\end{tabular}}}}%
    \put(0,0){\includegraphics[width=\unitlength,page=1]{delta_cabling_v2.pdf}}%
    \put(0.14236282,0.49965832){\color[rgb]{0,0,0}\makebox(0,0)[lt]{\lineheight{1.25}\smash{\begin{tabular}[t]{l}$1$\\\end{tabular}}}}%
    \put(0.14099918,0.43081538){\color[rgb]{0,0,0}\makebox(0,0)[lt]{\lineheight{1.25}\smash{\begin{tabular}[t]{l}$n$\\\end{tabular}}}}%
    \put(0.51963552,0.54292873){\color[rgb]{0,0,0}\makebox(0,0)[lt]{\lineheight{1.25}\smash{\begin{tabular}[t]{l}$p$\\\end{tabular}}}}%
    \put(0,0){\includegraphics[width=\unitlength,page=2]{delta_cabling_v2.pdf}}%
    \put(0.10372647,0.4652379){\color[rgb]{0,0,0}\makebox(0,0)[lt]{\lineheight{1.25}\smash{\begin{tabular}[t]{l}$\delta_n$\\\end{tabular}}}}%
    \put(0.31281735,0.48620144){\color[rgb]{0,0,0}\makebox(0,0)[lt]{\lineheight{1.25}\smash{\begin{tabular}[t]{l}$(p,0)$-cable\\\end{tabular}}}}%
    \put(0,0){\includegraphics[width=\unitlength,page=3]{delta_cabling_v2.pdf}}%
    \put(0.46863438,0.14136652){\color[rgb]{0,0,0}\makebox(0,0)[lt]{\lineheight{1.25}\smash{\begin{tabular}[t]{l}isotopy\end{tabular}}}}%
    \put(0,0){\includegraphics[width=\unitlength,page=4]{delta_cabling_v2.pdf}}%
    \put(0.4644597,0.28223001){\color[rgb]{0,0,0}\rotatebox{32.30833198}{\makebox(0,0)[lt]{\lineheight{1.25}\smash{\begin{tabular}[t]{l}isotopy\end{tabular}}}}}%
    \put(0,0){\includegraphics[width=\unitlength,page=5]{delta_cabling_v2.pdf}}%
  \end{picture}%
\endgroup%